\documentclass[pdflatex,sn-mathphys-num]{sn-jnl}

\usepackage{graphicx}%
\usepackage{multirow}%
\usepackage{amsmath,amssymb,amsfonts}%
\usepackage{amsthm}%
\usepackage{mathrsfs}%
\usepackage[title]{appendix}%
\usepackage{xcolor}%
\usepackage{textcomp}%
\usepackage{manyfoot}%
\usepackage{booktabs}%
\usepackage{algorithm}%
\usepackage{algorithmicx}%
\usepackage{algpseudocode}%
\usepackage{listings}%

\theoremstyle{thmstyleone}%
\newtheorem{theorem}{Theorem}
\newtheorem{proposition}[theorem]{Proposition}%

\theoremstyle{thmstyletwo}%
\newtheorem{remark}{Remark}%

\newtheorem{lemma}[theorem]{Lemma}

\theoremstyle{thmstylethree}%
\newtheorem{definition}{Definition}%

\begin{document}

\title[Article Title]{Gauge Geometry of Hodge Zero-Mode Transport in Parameter-Dependent Topological Data Analysis}


\author*[1]{\fnm{Satoshi} \sur{Kanno}}\email{satoshi.kanno06@g.softbank.co.jp}

\author[1]{\fnm{Rei} \sur{Nishimura}}\email{rei.nishimura@g.softbank.co.jp}

\author[1]{\fnm{Hiroshi} \sur{Yamauchi}}\email{hiroshi.yamauchi@g.softbank.co.jp}

\author[1]{\fnm{Yoshi-aki} \sur{Shimada}}\email{yoshiaki.shimada01@g.softbank.co.jp}

\affil*[1]{
\orgdiv{Quantum Information Technology Department, Quantum Technology Division, Product Research and Development Division},
\orgname{SoftBank Corp.},
\orgaddress{
\street{1-7-1 Kaigan},
\city{Minato-ku},
\postcode{105-7529},
\state{Tokyo},
\country{Japan}
}
}


\abstract{We propose a practical computational framework for detecting structural changes in parameter-dependent topological data. In many applications, such as time-series data analysis, anomaly detection, and monitoring of systems under changing control parameters, persistence diagrams describe the birth and death of topological features at each parameter value, but they do not fully capture how these features are reorganized over time. To address this limitation, we represent homological features by zero modes of the ordinary combinatorial Hodge Laplacian and track the corresponding feature spaces in a common ambient chain space. This allows us to compute curvature and holonomy as descriptors of local reorganization and accumulated memory in evolving topological structures. Curvature highlights parameter regions where homological features mix or change rapidly, while holonomy summarizes the net effect of such changes after a closed cycle. We also establish stability estimates showing that these descriptors are robust under perturbations of the Hodge Laplacian on regular regions. Numerical experiments on controlled time-dependent point-cloud data show that the proposed method detects tracking instability, distinguishes systems with nearly identical persistence diagrams, and captures cycle-level memory invisible to pointwise feature matching. These results suggest that zero-mode transport geometry can serve as a useful computational tool for analyzing dynamic topological data.}

\keywords{
Topological data analysis,
Hodge Laplacian,
Parameter-dependent homology,
Berry connection}



\maketitle

\section{Introduction}

Persistent homology \cite{edelsbrunner2002topological,zomorodian2004computing,cohen2005stability,edelsbrunner2008persistent,otter2017roadmap} is one of the central tools in topological data analysis. It extracts topological structures from data through a filtration and summarizes their robustness in terms of birth and death. Barcodes and persistence diagrams provide concise representations of the lifetimes of topological features, and have been widely used in both theory and applications, including glassy states\cite{obayashi2022persistent,nakamura2015persistent,hiraoka2016hierarchical}, materials science, machine learning, and neural network analysis\cite{moor2020topological,vandaele2021topologically,gabrielsson2020topology,hofer2020graph,adams2017persistence,hofer2017deep,bubenik2020persistence,chazal2014stochastic,kalivsnik2019tropical}.

In many applications, data depend not only on a filtration parameter but also on an additional external parameter, such as time, an external field, a control variable, an environmental variable, or an experimental condition. In such situations, it is practically important to describe and quantify how persistent homology, or more generally homological features, change as the external parameter varies. For example, when filtrations are constructed at successive times from time-series medical images, large changes in homological structure may be used as indicators of abnormal behavior. Similarly, when time-series data are analyzed by delay-coordinate embedding, or Takens embedding, changing the observation window produces a family of filtrations depending on a window parameter. In this case, detecting changes in the resulting homological features is useful for anomaly detection and the analysis of state transitions. Thus, tracking homology along variations of an external parameter is not merely a mathematical problem, but an important practical task in data analysis. In particular, one needs tools that describe not only the birth and death of topological features at each parameter value, but also how the corresponding homological feature spaces themselves vary over parameter space.

A representative viewpoint for such situations is given by vineyards \cite{cohen2006vines,chambers2026braiding}. Vineyards track how points in persistence diagrams move as the underlying data vary, thereby visualizing the evolution of topological features. This viewpoint is intuitive and has the advantage of directly displaying the motion of features in the diagram. By contrast, zigzag persistence \cite{carlsson2010zigzag} extends interval-decomposition-type descriptions to diagrams with both forward and backward maps, rather than aiming to describe the transport of feature spaces along an external parameter. Likewise, much of multiparameter persistence treats the additional parameter itself as a filtration coordinate, so that the entire structure becomes a genuine multifiltration, as in the rank invariant \cite{carlsson2007theory}, fibered barcodes and their visualization in RIVET \cite{lesnick2015interactive}, Hilbert functions \cite{oudot2024stability}, and multiparameter landscapes \cite{vipond2020multiparameter}. The setting of the present work is different: the scale parameter is regarded as the filtration parameter, while the second variable is treated as an external parameter such as time or a control variable.

Although vineyards provide an important description, any approach based on pointwise tracking has intrinsic limitations. When several significant persistence points approach one another, or when many short-lived features appear simultaneously, matching diagram points becomes unstable, and small perturbations can substantially change the correspondence. Moreover, even when local tracking is possible, it may fail to extend to a globally consistent labeling\cite{hickok2022persistence,hickok2022computing}. Thus, parameter-dependent topological structure cannot always be fully described by trajectories of labeled points in persistence diagrams.

This difficulty reflects the fact that homology is, at each parameter value, a finite-dimensional vector space. Choices of generators or representative cycles are generally not unique and need not be preserved under parameter variation. A basis that appears natural at one parameter value may become a different linear combination at another. Hence, tracking individual generators or feature labels is basis-dependent, whereas the intrinsic object is the variation of the homology space itself.

In this work, we therefore shift attention from individual persistence-diagram points to the reorganization of homological feature spaces. Rather than asking only how diagram points move, we ask how the corresponding homology spaces are transported, mixed, and reorganized over parameter space. Since homology is a finite-dimensional vector space at each parameter value, these spaces can be regarded as fibers over parameter space, and their variation can be formulated as a transport problem for homological vector spaces.

We make this transport geometry concrete using the ordinary combinatorial Hodge Laplacian. For a finite simplicial complex, the discrete Hodge theorem identifies the zero eigenspace of the Hodge Laplacian with homology. Thus, for a parameter-dependent filtration, the homology at each scale and external parameter value can be realized as a Hodge zero-mode space. The key point is that, instead of choosing individual generators or representative cycles, one can obtain a basis-independent description by using the orthogonal projection onto the zero-mode space.
More concretely, we embed the zero-mode spaces into a common Hilbert space and regard them as a zero-mode bundle over regular regions where the zero-mode multiplicity is constant and a spectral gap is maintained. On this bundle, a change of basis acts as a gauge transformation, while the curvature defined by the projection formula, as well as spectral quantities, invariant norms, and trace-type quantities derived from holonomy, provide gauge-invariant information. Therefore, our method describes the reorganization of homological feature spaces through zero-mode projections and gauge-invariant geometric quantities, rather than through generator tracking that depends on choices of bases or labels.

The proposed framework is not intended to replace vineyards. Rather, zero-mode transport is naturally connected to vineyard-type tracking. In regular regimes where persistence features are well separated and matching is stable, zero-mode transport is numerically consistent with vineyard-type monodromy. In this sense, the correspondences and exchanges described by vineyards can be viewed as coarse observations of transport in the zero-mode space. At the same time, because the proposed quantities are defined from the geometry of Hodge zero-mode subspaces rather than from a choice of labels or bases, transport, curvature, and holonomy remain stably computable even when persistence points approach one another and vineyard matching becomes ambiguous. This is one of the central numerical findings of this work.

The resulting quantities capture information that is difficult to detect by rank-type summaries or pointwise matching alone. Curvature measures local reorganization of homological feature spaces and the noncommutativity of infinitesimal transport, while holonomy records the cumulative effect of transport along closed loops, corresponding to global memory or monodromy. Thus, even when Betti numbers remain constant or persistence diagrams change only slightly, the internal structure of the zero-mode space may rotate, mix, or reorganize in a detectable way. The novelty of this work lies in formulating parameter-dependent homology as a transport theory of homological feature spaces and describing their reorganization through curvature and holonomy.

This viewpoint is also related to recent developments in quantum topological data analysis\cite{lloyd2016quantum,akhalwaya2024comparing,gyurik2022towards,yamauchi2025quantum,ameneyro2024quantum,hayakawa2022quantum,gyurik2024quantum}, where Hodge Laplacians, persistent Laplacians\cite{wei2025persistent,memoli2022persistent}, and other operator-based formulations play an important role. Since our construction is based directly on Laplacian zero modes and Berry-type geometry of subspaces, it suggests a possible connection between parameter-dependent topological data analysis and quantum geometric structures.

The main contributions of this work are as follows.

\begin{itemize}
    \item \textbf{A zero-mode transport geometry based on the ordinary Hodge Laplacian.}
    We formulate parameter-dependent homology as a transport geometry of zero-mode spaces associated with the ordinary combinatorial Hodge Laplacian. This provides a framework for describing the reorganization of homological feature spaces in terms of zero-mode projections, Berry-type connections, curvature, and holonomy.

    \item \textbf{Theoretical and numerical stability of the proposed zero-mode geometry.}
    We show that, on regular regions, the zero-mode projection and the curvature defined by the projection formula are locally stable under perturbations of the Hodge Laplacian as an operator family. Numerical experiments further support the stable computation of curvature under noise and small perturbations.

    \item \textbf{Numerical relation to vineyard tracking and detection of additional transport information.}
    We show numerically that zero-mode transport reproduces vineyard-type feature tracking in regular regimes, while remaining stably computable when persistence points approach one another and vineyard matching becomes unstable. We further show that curvature and holonomy detect mixing, reorganization, and cycle-level memory of homological feature spaces, which are difficult to capture by pointwise matching alone.
\end{itemize}

Overall, this work provides a framework for studying time-evolving data and data depending on external parameters through the transport geometry of homological feature spaces. It enables the analysis not only of the existence and lifetime of topological features, but also of the reorganization, transport, and memory effects of their internal structure.

This paper is organized as follows. Section 2 reviews parameter-dependent persistent homology and the limitations of pointwise feature tracking. Section 3 introduces the Hodge zero-mode bundle, together with curvature, holonomy, and stability on regular regions. Section 4 explains their interpretation as descriptors of local reorganization and global memory. Section 5 presents numerical experiments demonstrating the usefulness of the proposed descriptors. Section 6 concludes with discussion and future directions.

\section{Background and Motivation}

In this section, we review the basic concepts and notation used in this paper. We first recall the definitions of filtrations and persistent homology, and then formulate the setting in which an additional parameter independent of the filtration parameter is present. We then briefly discuss the idea of pointwise tracking in persistence diagrams, as represented by vineyards, and its limitations. Based on these preparations, in the next section we introduce the zero-mode bundle construction using the ordinary combinatorial Hodge Laplacian.

In the geometric and operator-theoretic parts of this paper, we use adjoint operators, orthogonal projections, Hodge Laplacians, and Berry connections. Therefore, chain groups are regarded as finite-dimensional Hilbert spaces over \(\mathbb R\) or \(\mathbb C\). Each chain group is equipped with the standard inner product for which the oriented simplices form an orthonormal basis. Hence the Hodge Laplacians and zero-mode projections used below are defined with respect to this inner product.

\subsection{Filtrations and persistent homology}

We first recall the basic notion of a filtration. Let \(I\) be a totally ordered set. A family of simplicial complexes
\[
\{K_a\}_{a\in I}
\]
is called a filtration if
\[
a\le b \quad \Longrightarrow \quad K_a\subset K_b
\]
for all \(a,b\in I\). In this paper, we mainly consider the case \(I\subset\mathbb R\). Typical examples include Vietoris--Rips filtrations and \v{C}ech filtrations associated with finite point clouds.

Given a filtration, for each \(a\le b\) there is an inclusion map
\[
\iota_{a,b}:K_a\hookrightarrow K_b.
\]
The image of the induced linear map on homology
\[
(\iota_{a,b})_*:H_q(K_a)\to H_q(K_b)
\]
is called the \(q\)-th persistent homology from \(a\) to \(b\):
\[
H_q^{a,b}
:=
\operatorname{Im}\left(H_q(K_a)\to H_q(K_b)\right).
\]
Its dimension
\[
\beta_q^{a,b}:=\dim H_q^{a,b}
\]
is called the persistent Betti number.

The persistent Betti number counts the number of \(q\)-dimensional features that have appeared by scale \(a\) and remain alive until scale \(b\). In this sense, persistent homology describes not only the existence of topological features, but also their robustness across scales.

In one-parameter persistent homology, every pointwise finite-dimensional persistence module decomposes as a direct sum of interval modules. Consequently, its isomorphism class is described by a barcode, equivalently by a persistence diagram. This is why birth--death data play a central role in one-parameter persistent homology.

\subsection{An additional parameter independent of the filtration}

In many applications, persistent homology depends not only on the filtration parameter, but also on an additional parameter independent of the filtration. The most typical example is time-dependent data, but the same situation also appears for data depending on external fields, control variables, temperature, pressure, environmental variables, or experimental conditions.

Let \(A\subset\mathbb R\) be the filtration-parameter set, and let \(\Lambda\) be an additional parameter space. We consider a family
\[
\{K(a,\lambda)\}_{(a,\lambda)\in A\times\Lambda}.
\]
If, for each fixed \(\lambda\in\Lambda\),
\[
a\le b \quad \Longrightarrow \quad K(a,\lambda)\subset K(b,\lambda),
\]
then this family is called a parameter-dependent filtration.

For each fixed \(\lambda\), one obtains the usual one-parameter persistent homology
\[
H_q^{a,b}(\lambda)
=
\operatorname{Im}
\left(
H_q(K(a,\lambda))\to H_q(K(b,\lambda))
\right).
\]
Thus, for each \(\lambda\), one may compute a persistence diagram or barcode. However, when an additional parameter is present, computing these objects separately for each \(\lambda\) does not by itself fully describe how the topological structure changes as \(\lambda\) varies.

In this paper, we do not regard the additional parameter as a second filtration coordinate. In other words, our setting is not that of genuine multiparameter persistence itself. Rather, we consider a situation in which an external parameter, such as time or a control variable, is present in addition to the filtration direction. From this viewpoint, the problem is not to construct a barcode-type complete invariant, but to describe how homological features vary along the external parameter.

\subsection{Vineyards and feature tracking}

A natural way to understand persistent homology depending on an additional parameter is to track points in the persistence diagram. This viewpoint is known as the vineyard construction. Informally, as \(\lambda\) varies, the persistence-diagram points
\[
(b_i(\lambda),d_i(\lambda))
\]
move in the plane, and their trajectories are interpreted as the evolution of topological features.

This viewpoint is natural and has the advantage of directly visualizing feature evolution. In particular, when the diagram points are well separated and the corresponding features are clearly distinguishable, vineyard-style tracking gives an effective description.

However, pointwise tracking has limitations. When several important features approach one another in the persistence diagram, it becomes unstable to decide which point should be regarded as the continuation of which. Moreover, when many short-lived features appear simultaneously, matching becomes sensitive to noise, and small perturbations may significantly change the resulting correspondence.

Furthermore, from the viewpoint of persistence diagram bundles, even if diagram points can be tracked locally, they may not glue together into a globally consistent labeling. Therefore, describing parameter-dependent topological structure solely in terms of trajectories of labeled persistence-diagram points has intrinsic limitations.

For this reason, in the following sections we introduce a framework for geometrically describing the variation of homological feature spaces using zero-mode spaces of the ordinary combinatorial Hodge Laplacian.

\section{Zero-Mode Bundles and Its Geometry}

In this section, we realize the family of zero-mode spaces of the ordinary combinatorial Hodge Laplacian as a family of subspaces inside a common Hilbert space, and show that it forms a vector bundle on a regular region. We then introduce a natural Berry-type connection on this zero-mode bundle and define the associated curvature and holonomy. Finally, we state that, on regular regions, the zero-mode projection and the curvature expressed by the projection formula are stable under perturbations of the operator family.

The Laplacian used in this section is not the persistent Laplacian, but the ordinary combinatorial Hodge Laplacian associated with the complex at each parameter point. Therefore, the construction in this section describes the transport geometry of instantaneous homology spaces at each scale and external parameter, rather than the transport geometry of fixed-birth persistent feature spaces.

\subsection{Zero-mode spaces and the zero-mode bundle}

We retain the notation of Section~2. Let \(A\subset \mathbb{R}\) be the set of
filtration parameters, and let \(\Lambda\) be an additional parameter space.


Throughout this subsection, we fix such a parameter-dependent filtration.

\begin{definition}[Ambient chain space]
Let \(K_{\max}\) denote the maximal simplicial complex on the fixed vertex set
\(V\). For each degree \(q\), define
\[
H_q:=C_q(K_{\max}).
\]
We regard \(H_q\) as a finite-dimensional Hilbert space over \(\mathbb{R}\) or
\(\mathbb{C}\), equipped with the standard inner product for which the oriented
\(q\)-simplices of \(K_{\max}\) form an orthonormal basis.
\end{definition}

For every \((d,\lambda)\in A\times\Lambda\), the chain group
\(C_q(K(d,\lambda))\) is naturally identified with a subspace of \(H_q\):
\[
C_q(K(d,\lambda))\subset H_q.
\]

\begin{definition}[Natural combinatorial Hodge Laplacian]
For each \((d,\lambda)\in A\times\Lambda\), let
\[
\partial_q(d,\lambda):
C_q(K(d,\lambda))\longrightarrow C_{q-1}(K(d,\lambda))
\]
be the boundary operator. The \emph{natural \(q\)-th combinatorial Hodge
Laplacian} on \(K(d,\lambda)\) is the self-adjoint operator
\[
\Delta_q^{\mathrm{Hodge},\mathrm{nat}}(d,\lambda)
:
C_q(K(d,\lambda))\longrightarrow C_q(K(d,\lambda))
\]
defined by
\[
\Delta_q^{\mathrm{Hodge},\mathrm{nat}}(d,\lambda)
=
\partial_q(d,\lambda)^\ast\partial_q(d,\lambda)
+
\partial_{q+1}(d,\lambda)\partial_{q+1}(d,\lambda)^\ast,
\]
where \(\ast\) denotes the adjoint with respect to the standard inner product.
\end{definition}

The operator above is defined on the parameter-dependent space
\(C_q(K(d,\lambda))\). In order to compare Laplacians at different parameter
values, we extend it to the fixed Hilbert space \(H_q\).

\begin{definition}[Extended Hodge Laplacian]
For each \((d,\lambda)\in A\times\Lambda\), consider the orthogonal
decomposition
\[
H_q
=
C_q(K(d,\lambda))\oplus C_q(K(d,\lambda))^\perp .
\]
The \emph{extended \(q\)-th Hodge Laplacian} is the self-adjoint operator
\[
\Delta_q^{\mathrm{Hodge}}(d,\lambda):H_q\longrightarrow H_q
\]
defined by
\[
\Delta_q^{\mathrm{Hodge}}(d,\lambda)
:=
\Delta_q^{\mathrm{Hodge},\mathrm{nat}}(d,\lambda)\oplus I,
\]
where \(I\) denotes the identity operator on
\(C_q(K(d,\lambda))^\perp\).
\end{definition}

\begin{remark}
The extension by the identity is chosen so that no additional zero modes are
created on the orthogonal complement. In particular,
\[
\ker \Delta_q^{\mathrm{Hodge}}(d,\lambda)
=
\ker \Delta_q^{\mathrm{Hodge},\mathrm{nat}}(d,\lambda).
\]
\end{remark}

\begin{definition}[Zero-mode space]
For each \((d,\lambda)\in A\times\Lambda\), the \emph{\(q\)-th zero-mode space}
of \(K(d,\lambda)\) is defined by
\[
E_0(d,\lambda)
:=
\ker \Delta_q^{\mathrm{Hodge}}(d,\lambda)
\subset H_q.
\]
\end{definition}

The following proposition shows that the zero-mode space recovers the usual
simplicial homology.

\begin{proposition}
For every \((d,\lambda)\in A\times\Lambda\), there is a natural isomorphism
\[
E_0(d,\lambda)\cong H_q(K(d,\lambda)).
\]
In particular,
\[
\dim E_0(d,\lambda)=\beta_q(d,\lambda).
\]
\end{proposition}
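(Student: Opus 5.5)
The plan is to reduce the statement to the standard discrete Hodge theorem on the finite-dimensional Hilbert space $C_q:=C_q(K(d,\lambda))$. By the Remark following the definition of the extended Laplacian, $E_0(d,\lambda)=\ker\Delta_q^{\mathrm{Hodge},\mathrm{nat}}(d,\lambda)$. So it suffices to identify the kernel of $L:=\partial_q^\ast\partial_q+\partial_{q+1}\partial_{q+1}^\ast$ on $C_q$ with $\ker\partial_q/\operatorname{Im}\partial_{q+1}$. To justify the Remark itself, note that the extension by $I$ is block diagonal with respect to $H_q=C_q\oplus C_q^\perp$. A vector $u\oplus w$ is therefore annihilated exactly when $Lu=0$ and $w=0$.

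\textbf{Step 1: the kernel of $L$.} For $x\in C_q$ we have
\[
\langle Lx,x\rangle=\|\partial_q x\|^2+\|\partial_{q+1}^\ast x\|^2 .
\]
Since $L$ is positive semidefinite, $Lx=0$ holds if and only if $\partial_q x=0$ and $\partial_{q+1}^\ast x=0$. Using $\ker\partial_{q+1}^\ast=(\operatorname{Im}\partial_{q+1})^\perp$, this gives
\[
\ker L=\ker\partial_q\cap(\operatorname{Im}\partial_{q+1})^\perp .
\]

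\textbf{Step 2: orthogonal decomposition of cycles.} Because $\partial_q\partial_{q+1}=0$, we have $\operatorname{Im}\partial_{q+1}\subset\ker\partial_q$. Finite dimensionality then gives the orthogonal decomposition
\[
\ker\partial_q=\operatorname{Im}\partial_{q+1}\oplus\bigl(\ker\partial_q\cap(\operatorname{Im}\partial_{q+1})^\perp\bigr).
\]
This is the Hodge decomposition restricted to cycles.

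\textbf{Step 3: the isomorphism.} Define $\Phi:E_0(d,\lambda)\to H_q(K(d,\lambda))$ by $\Phi(x)=[x]$. It is well defined because harmonic chains are cycles. It is injective because a harmonic boundary lies in $\operatorname{Im}\partial_{q+1}\cap(\operatorname{Im}\partial_{q+1})^\perp=0$. It is surjective because every cycle splits as a boundary plus a harmonic part by Step 2. The inverse sends a class to its unique harmonic representative, which is the orthogonal projection of any representative onto $(\operatorname{Im}\partial_{q+1})^\perp$. This map involves no choices, which is the sense in which the isomorphism is natural. Taking dimensions yields $\dim E_0=\beta_q(d,\lambda)$.

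The only point needing care, and the main obstacle, is the interface between $C_q$ and $H_q$. The adjoints must be taken inside $C_q$ and $C_{q-1}$ of $K(d,\lambda)$ with their restricted standard inner products. Since these are spanned by subsets of the orthonormal simplex bases, the restricted inner products are again standard, so the argument above applies verbatim. Over $\mathbb{C}$ the same computation holds with Hermitian inner products. Over $\mathbb{R}$ or $\mathbb{C}$ the homology is taken with the corresponding field coefficients, so $\beta_q$ denotes the Betti number over that field.
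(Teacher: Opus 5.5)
Your proposal is correct and follows the same route as the paper: use the block-diagonal form $\Delta_q^{\mathrm{Hodge},\mathrm{nat}}\oplus I$ to reduce to the kernel of the natural Laplacian, then identify that kernel with $H_q(K(d,\lambda))$ via the discrete Hodge theorem. The only difference is that the paper cites the discrete Hodge theorem, whereas you prove it (harmonic chains are exactly the cycles orthogonal to boundaries, together with the orthogonal splitting of $\ker\partial_q$). You also check the inner-product compatibility between $C_q(K(d,\lambda))$ and the ambient space, which the paper leaves implicit.
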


\begin{proof}
By definition of the extended Hodge Laplacian, we have
\[
\Delta_q^{\mathrm{Hodge}}(d,\lambda)
=
\Delta_q^{\mathrm{Hodge},\mathrm{nat}}(d,\lambda)\oplus I
\]
with respect to the orthogonal decomposition
\[
H_q
=
C_q(K(d,\lambda))\oplus C_q(K(d,\lambda))^\perp .
\]
Since the identity operator on \(C_q(K(d,\lambda))^\perp\) has trivial kernel,
it follows that
\[
\ker \Delta_q^{\mathrm{Hodge}}(d,\lambda)
=
\ker \Delta_q^{\mathrm{Hodge},\mathrm{nat}}(d,\lambda).
\]
On the other hand, by the discrete Hodge theorem,
\[
\ker \Delta_q^{\mathrm{Hodge},\mathrm{nat}}(d,\lambda)
\cong
H_q(K(d,\lambda)).
\]
Therefore,
\[
E_0(d,\lambda)
=
\ker \Delta_q^{\mathrm{Hodge}}(d,\lambda)
\cong
H_q(K(d,\lambda)).
\]
Taking dimensions gives
\[
\dim E_0(d,\lambda)=\beta_q(d,\lambda).
\]
This completes the proof.
\end{proof}

\subsection{Regular regions and smooth operator models}

In this subsection, we formulate the regions on which the zero-mode spaces of
parameter-dependent Hodge Laplacians can be treated as vector bundles. In
particular, we clarify the regularity and spectral-gap assumptions needed in
order to define Berry connections, curvature, and holonomy.

Vietoris--Rips and \v{C}ech filtrations do not, in general, vary smoothly with
respect to parameters. For example, consider a finite point cloud depending on a
parameter \(\lambda\),
\[
X(\lambda)=\{x_1(\lambda),\dots,x_N(\lambda)\}.
\]
The Vietoris--Rips complex at scale \(d\) is given by
\[
K(d,\lambda)
=
\{\sigma\subset X(\lambda)\mid \operatorname{diam}_\lambda(\sigma)\le d\}.
\]
Thus, a simplex \(\sigma\) enters the complex precisely when one crosses the
critical hypersurface
\[
d=\operatorname{diam}_\lambda(\sigma).
\]
For this reason, the ordinary combinatorial Hodge Laplacian is not, in general,
a globally smooth operator family in the parameters \((d,\lambda)\). Rather, it
is an operator family that changes discretely as simplices enter or leave the
complex.

In this work, we model this discretely varying operator by a smooth Hodge-type
operator family on a common Hilbert space. This does not mean that the filtration
itself is replaced by a smooth topological invariant. Rather, the entrance and
exit of simplices are modeled by smooth activation weights so that Berry
connections and curvature can be defined.

\begin{definition}[Filtration threshold]
For each simplex \(\sigma\in K_{\max}\), let
\[
r_\sigma(\lambda)
\]
denote the filtration threshold at which \(\sigma\) appears. In the
Vietoris--Rips case, this is typically given by
\[
r_\sigma(\lambda)=\operatorname{diam}_\lambda(\sigma).
\]
\end{definition}

In the ordinary filtration, the presence or absence of the simplex \(\sigma\) is
described by the discontinuous indicator
\[
\mathbf 1_{\{d\ge r_\sigma(\lambda)\}}.
\]

\begin{definition}[Smooth activation weight]
Let \(\rho_\varepsilon\) be a smooth approximation of the step function. 
For each simplex \(\sigma\in K_{\max}\), define the smooth activation weight
\[
w_\sigma^\varepsilon(d,\lambda)
=
\rho_\varepsilon(d-r_\sigma(\lambda)).
\]
\end{definition}

For example, one may take
\[
\rho_\varepsilon(s)
=
\frac{1}{1+e^{-s/\varepsilon}}.
\]
If \(d\ll r_\sigma(\lambda)\), one has
\[
w_\sigma^\varepsilon(d,\lambda)\approx 0,
\]
so that the simplex \(\sigma\) is almost inactive. On the other hand, if
\(d\gg r_\sigma(\lambda)\), then
\[
w_\sigma^\varepsilon(d,\lambda)\approx 1,
\]
so that the simplex \(\sigma\) is almost fully active. Only near the critical
hypersurface
\[
d=r_\sigma(\lambda)
\]
does the weight smoothly transition from \(0\) to \(1\).

\begin{remark}
The sigmoid function is used only as a simple smooth approximation of the
discontinuous simplex indicator. Since it is not exactly equal to \(0\) or \(1\)
for finite \(\varepsilon\), the resulting weighted operator is not, in general,
the exact simplicial Hodge Laplacian.

Nevertheless, with a sufficiently sharp activation profile, or with a smooth
cutoff function that is exactly \(0\) and \(1\) outside a small transition
region, the operator agrees with, or closely approximates, the ordinary Hodge
Laplian away from critical hypersurfaces.
\end{remark}

\begin{definition}[Weight operator]
For each degree \(q\), define the diagonal operator
\[
W_q^\varepsilon(d,\lambda)
=
\operatorname{diag}
\bigl(
w_\sigma^\varepsilon(d,\lambda)
\bigr)_{\sigma\in K_{\max}^{(q)}}.
\]
This is a self-adjoint operator on the common Hilbert space \(H_q\), and records
the activation strength of each \(q\)-simplex direction.
\end{definition}

\begin{definition}[Weighted boundary operator]
Let
\[
\partial_q:H_q\to H_{q-1}
\]
be the ordinary boundary operator on the maximal complex \(K_{\max}\). The
weighted boundary operator is defined by
\[
\partial_q^\varepsilon(d,\lambda)
=
\bigl(W_{q-1}^\varepsilon(d,\lambda)\bigr)^{1/2}
\partial_q
\bigl(W_q^\varepsilon(d,\lambda)\bigr)^{1/2}.
\]
\end{definition}

This definition smoothly turns the boundary relation on or off according to the
activation of the \(q\)-simplex on the domain side and the \((q-1)\)-simplex on
the target side. If both simplices are sufficiently active, the operator is close
to the ordinary boundary operator. If either simplex is inactive, its
contribution is suppressed.

\begin{definition}[Smooth Hodge-type operator]
Fix a parameter \(\mu>0\). The smooth Hodge-type operator in degree \(q\) is
defined by
\[
\Delta_q^\varepsilon(d,\lambda)
=
(\partial_q^\varepsilon)^\ast\partial_q^\varepsilon
+
\partial_{q+1}^\varepsilon(\partial_{q+1}^\varepsilon)^\ast
+
\mu\bigl(I-W_q^\varepsilon(d,\lambda)\bigr).
\]
Here \(\mu\) is a penalty parameter assigning positive energy to inactive
\(q\)-simplex directions.
\end{definition}

The final term
\[
\mu(I-W_q^\varepsilon)
\]
pushes inactive \(q\)-simplex directions away from the zero-mode space. Indeed,
in directions where \(w_\sigma^\varepsilon\approx 1\), the penalty almost
disappears, while in directions where \(w_\sigma^\varepsilon\approx 0\), a
positive energy of order \(\mu\) is assigned.

\begin{remark}[Choice of the penalty parameter]
The role of \(\mu\) is only to assign positive energy to inactive simplex
directions. Thus, from the theoretical point of view, any fixed \(\mu>0\) is
sufficient.

In the numerical experiments, unless otherwise stated, we take
\[
\mu=1.
\]
This is the natural choice that assigns a unit penalty to inactive simplex
directions and is sufficient to remove those directions from the zero-mode
sector. More generally, one may choose \(\mu\) according to the typical scale of
the nonzero spectrum, but in this work we use the fixed value \(\mu=1\).
\end{remark}

\begin{proposition}[Basic properties of the smooth Hodge-type operator]
The operator
\[
\Delta_q^\varepsilon(d,\lambda)
\]
defined above is self-adjoint and positive semidefinite on the common Hilbert
space \(H_q\).

Moreover, if each weight \(w_\sigma^\varepsilon(d,\lambda)\) is of class \(C^r\)
in \((d,\lambda)\), then
\[
(d,\lambda)\longmapsto \Delta_q^\varepsilon(d,\lambda)
\]
is a \(C^r\) family of self-adjoint operators.
\end{proposition}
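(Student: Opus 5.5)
The plan is to treat the three summands of \(\Delta_q^\varepsilon(d,\lambda)\) separately. For self-adjointness and positivity, each summand is handled on its own; for regularity, the key observation is that everything is built from the weights by operations that preserve \(C^r\) smoothness.

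First, self-adjointness. The weight operators \(W_q^\varepsilon\) and \(W_{q-1}^\varepsilon\) are real diagonal in the orthonormal simplex basis, so they are self-adjoint. Since \(\rho_\varepsilon\) takes values in \([0,1]\) (for the sigmoid, in \((0,1)\)), they are also positive semidefinite. Hence their square roots \((W^\varepsilon)^{1/2}=\operatorname{diag}\bigl((w_\sigma^\varepsilon)^{1/2}\bigr)\) are well defined, self-adjoint and diagonal. Writing \(B=\partial_q^\varepsilon\) and \(C=\partial_{q+1}^\varepsilon\), the operators \(B^\ast B\) and \(CC^\ast\) are manifestly self-adjoint. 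The term \(\mu(I-W_q^\varepsilon)\) is real diagonal, hence also self-adjoint, and a sum of self-adjoint operators is self-adjoint.

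Second, positivity. For any \(x\in H_q\),
\[
\langle x,\Delta_q^\varepsilon x\rangle=\|Bx\|^2+\|C^\ast x\|^2+\mu\sum_\sigma\bigl(1-w_\sigma^\varepsilon\bigr)|x_\sigma|^2 .
\]
Each term is nonnegative because \(\mu>0\) and \(0\le w_\sigma^\varepsilon\le 1\). This uses the standing assumption that \(\rho_\varepsilon\) is a smooth approximation of the step function with values in \([0,1]\), which I would state explicitly.

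Third, regularity, which is the only delicate point. The entries of \(W_q^\varepsilon\) are \(C^r\) by hypothesis, but the square root \(\sqrt{w}\) need not be \(C^r\) where \(w\) vanishes. I would avoid the issue by never differentiating the square roots themselves. In the orthonormal basis, the matrix entries of \(B^\ast B\) are
\[
(B^\ast B)_{\sigma\tau}=\sqrt{w_\sigma w_\tau}\sum_{\kappa}w_\kappa\,(\partial_q)_{\kappa\sigma}(\partial_q)_{\kappa\tau},
\]
where \(\sigma,\tau\) are \(q\)-simplices and \(\kappa\) ranges over \((q-1)\)-simplices. The factors \(\sqrt{w_\kappa}\) pair up to give \(w_\kappa\), but the factor \(\sqrt{w_\sigma w_\tau}\) remains. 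Similarly, \(CC^\ast\) has entries \(\sqrt{w_\sigma w_\tau}\sum_\eta w_\eta(\partial_{q+1})_{\sigma\eta}(\partial_{q+1})_{\tau\eta}\), with \(\eta\) ranging over \((q+1)\)-simplices.

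So the main obstacle is the smoothness of \(\sqrt{w_\sigma w_\tau}\) when \(\sigma\neq\tau\). I would resolve it using strict positivity of the weights. For the sigmoid, and more generally whenever \(\rho_\varepsilon>0\), the map \(t\mapsto\sqrt t\) is \(C^\infty\) on \((0,\infty)\). The composition \(\sqrt{w_\sigma w_\tau}\) is therefore \(C^r\) whenever \(w_\sigma,w_\tau\) are \(C^r\) and positive. If instead one uses a cutoff that vanishes exactly, I would either add the hypothesis that \(\sqrt{w_\sigma^\varepsilon}\) is \(C^r\) (for example, by choosing \(\rho_\varepsilon\) as the square of a smooth cutoff), or note that this is implicitly part of the standing smoothness assumption.

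With this in hand, every matrix entry of \(\Delta_q^\varepsilon\) is a finite sum of products of \(C^r\) functions with constant coefficients. Hence \((d,\lambda)\mapsto\Delta_q^\varepsilon(d,\lambda)\) is a \(C^r\) map into the finite-dimensional space of self-adjoint operators on \(H_q\), and the operator-norm topology agrees with the entrywise one there.
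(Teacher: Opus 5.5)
Your proposal is correct and follows the same route as the paper: split $\Delta_q^\varepsilon$ into $(\partial_q^\varepsilon)^\ast\partial_q^\varepsilon$, $\partial_{q+1}^\varepsilon(\partial_{q+1}^\varepsilon)^\ast$ and $\mu(I-W_q^\varepsilon)$, use the $A^\ast A$ and $BB^\ast$ structure together with $0\le W_q^\varepsilon\le I$ for self-adjointness and positivity, and obtain $C^r$ regularity by composition. Your handling of the square roots is more careful than the paper's, which justifies the $C^r$ dependence of $(W^\varepsilon)^{1/2}$ only by the nonnegativity of its diagonal entries ``under the chosen smooth activation model''. As you point out, nonnegativity alone does not suffice: one needs strictly positive weights (true for the sigmoid) or a direct assumption that $\sqrt{w_\sigma^\varepsilon}$ is $C^r$.
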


\begin{proof}
First,
\[
(\partial_q^\varepsilon)^\ast\partial_q^\varepsilon
\]
is of the form \(A^\ast A\), with \(A=\partial_q^\varepsilon\). Hence it is
self-adjoint and positive semidefinite. Similarly,
\[
\partial_{q+1}^\varepsilon(\partial_{q+1}^\varepsilon)^\ast
\]
is of the form \(BB^\ast\), and is therefore self-adjoint and positive
semidefinite.

Furthermore, if
\[
0\le w_\sigma^\varepsilon(d,\lambda)\le 1
\]
for all \(\sigma\), then the diagonal operator \(W_q^\varepsilon(d,\lambda)\)
satisfies
\[
0\le W_q^\varepsilon(d,\lambda)\le I.
\]
Hence
\[
I-W_q^\varepsilon(d,\lambda)
\]
is self-adjoint and positive semidefinite. Since \(\mu>0\), the operator
\[
\mu(I-W_q^\varepsilon(d,\lambda))
\]
is also self-adjoint and positive semidefinite.

Therefore \(\Delta_q^\varepsilon(d,\lambda)\), being the sum of these three
terms, is self-adjoint and positive semidefinite.

If \(w_\sigma^\varepsilon(d,\lambda)\) depends on \((d,\lambda)\) in a \(C^r\)
manner, then so does \(W_q^\varepsilon(d,\lambda)\). Moreover, since
\(W_q^\varepsilon(d,\lambda)\) is diagonal with nonnegative entries, its square
root also depends \(C^r\)-smoothly on \((d,\lambda)\), under the chosen smooth
activation model. Consequently,
\[
\partial_q^\varepsilon(d,\lambda)
\]
is a \(C^r\) family of operators, and hence so is
\[
\Delta_q^\varepsilon(d,\lambda).
\]
\end{proof}

\begin{remark}
The smooth ambient Hodge model is not intended to define a new topological
invariant. Near critical hypersurfaces, simplices may have intermediate weights
between \(0\) and \(1\), and therefore
\[
\ker \Delta_q^\varepsilon(d,\lambda)
\]
need not coincide exactly with simplicial homology at every parameter value.

Rather, the zero-mode space, or low-energy space, of
\(\Delta_q^\varepsilon(d,\lambda)\) should be interpreted as the feature space of
a smooth Hodge-type operator model associated with the discrete filtration. On
the other hand, away from critical hypersurfaces and for sufficiently small
\(\varepsilon\), each \(w_\sigma^\varepsilon\) is close to either \(0\) or \(1\),
and the operator approximates the ordinary Hodge Laplacian together with a
positive penalty on inactive simplex directions.
\end{remark}

\begin{definition}[Regular region]
Let \(U\subset A\times\Lambda\) be an open set, and let
\[
\Delta_q(d,\lambda)
\]
be the chosen Hodge-type operator family in degree \(q\). We say that \(U\) is a
\emph{regular region} for \(\Delta_q\) if the following conditions hold:

\begin{enumerate}
    \item The map
    \[
    (d,\lambda)\longmapsto \Delta_q(d,\lambda)
    \]
    is continuous on \(U\), and is of class \(C^r\) whenever derivatives are
    needed.

    \item The zero-mode space
    \[
    E_0(d,\lambda):=\ker \Delta_q(d,\lambda)
    \]
    has constant dimension on \(U\).

    \item There exists a constant \(\gamma>0\) such that, for every
    \((d,\lambda)\in U\),
    \[
    \operatorname{Spec}\bigl(\Delta_q(d,\lambda)\bigr)
    \subset
    \{0\}\cup[\gamma,\infty).
    \]
\end{enumerate}
\end{definition}

Thus, on a regular region, the zero eigenvalue is separated from the nonzero
spectrum by a uniform spectral gap.

\begin{remark}[Spectral gap condition]
The spectral-gap condition ensures that the zero-mode space remains stably
separated from the nonzero modes.

If the smallest positive eigenvalue is denoted by
\[
\lambda_1^+(d,\lambda)
:=
\min\left(
\operatorname{Spec}(\Delta_q(d,\lambda))
\setminus\{0\}
\right),
\]
then on a regular region we require
\[
\lambda_1^+(d,\lambda)\ge \gamma
\]
uniformly.
\end{remark}

\begin{proposition}[Riesz projection onto the zero-mode space]
Let \(U\subset A\times\Lambda\) be a connected regular region. Then the orthogonal
projection
\[
P_0(d,\lambda):H_q\to H_q
\]
onto the zero-mode space is given by the Riesz projection
\[
P_0(d,\lambda)
=
\frac{1}{2\pi i}
\int_\Gamma
\bigl(z-\Delta_q(d,\lambda)\bigr)^{-1}\,dz.
\]
Here \(\Gamma\) is a small closed contour enclosing the origin and no nonzero
spectrum.

Moreover, if \(\Delta_q(d,\lambda)\) is a \(C^r\) operator family, then
\[
(d,\lambda)\longmapsto P_0(d,\lambda)
\]
is also of class \(C^r\).
\end{proposition}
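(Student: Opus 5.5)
The plan has two parts. First, at a fixed point, verify the Riesz formula using the spectral theorem. Second, show that $P_0$ has the same regularity as $\Delta_q$, by fixing one contour that works on a neighbourhood and differentiating under the integral.

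\emph{Pointwise formula.} Since $H_q$ is finite-dimensional and $\Delta_q(d,\lambda)$ is self-adjoint, write the spectral decomposition
\[
\Delta_q(d,\lambda)=\sum_{\nu\in\operatorname{Spec}}\nu\,Q_\nu ,
\]
where the $Q_\nu$ are mutually orthogonal spectral projections summing to $I$, and $Q_0$ is the orthogonal projection onto $E_0(d,\lambda)$. Then for $z\notin\operatorname{Spec}$,
\[
(z-\Delta_q)^{-1}=\sum_\nu (z-\nu)^{-1}Q_\nu .
\]
Take $\Gamma=\{|z|=\gamma/2\}$. By condition~3 of a regular region, the only spectral point inside $\Gamma$ is $0$, and no spectrum lies on $\Gamma$. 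Cauchy's integral formula gives $\frac{1}{2\pi i}\int_\Gamma (z-\nu)^{-1}dz=1$ if $\nu=0$ and $0$ otherwise. Hence the integral equals $Q_0=P_0(d,\lambda)$. If $0$ happens not to be an eigenvalue, this still holds and gives $P_0=0$, consistent with the constant-dimension assumption.

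\emph{Uniform contour.} The uniform gap makes one contour valid for every point of $U$. For $|z|=\gamma/2$, the normal-operator bound gives
\[
\|(z-\Delta_q(d,\lambda))^{-1}\|=\operatorname{dist}(z,\operatorname{Spec})^{-1}\le 2/\gamma .
\]
This bound is uniform in $(d,\lambda)$ and $z\in\Gamma$. The set $\Gamma\times U$ has no singularities of the resolvent, so the integrand is defined everywhere on it.

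\emph{Regularity.} The key tool is the resolvent identity
\[
R(z,p)-R(z,p')=R(z,p)\,\bigl(\Delta_q(p)-\Delta_q(p')\bigr)\,R(z,p'),
\]
where $R(z,p)=(z-\Delta_q(p))^{-1}$ and $p=(d,\lambda)$. Combined with the uniform bound $2/\gamma$, it shows that $p\mapsto R(z,p)$ is continuous, uniformly in $z\in\Gamma$. It also gives the derivative formula $\partial_j R=R\,(\partial_j\Delta_q)\,R$. Iterating by induction on the order $k\le r$, the $k$-th partial derivatives of $R$ are finite sums of products of resolvents and derivatives $\partial^\alpha\Delta_q$ with $|\alpha|\le k$. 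These are continuous jointly in $(z,p)$ on the compact set $\Gamma$ times a compact neighbourhood of $p$, and hence uniformly bounded there.

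Therefore differentiation under the integral sign over the fixed compact contour is justified, for instance by the mean value theorem together with uniform convergence of difference quotients. This yields $P_0\in C^r$. Alternatively, one can note that matrix inversion is real-analytic on invertible matrices, so $(z,p)\mapsto R(z,p)$ is $C^r$ as a composition, and then integrate.

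\emph{Main obstacle.} The main point is ensuring that a single contour works locally. This is where the uniform gap $\gamma$ and the constant dimension are used; without them, eigenvalues could cross $\Gamma$ and the rank of $P_0$ could jump. Connectedness of $U$ is not needed for smoothness; it only ensures that the rank, and hence the bundle, is globally constant. Continuity of $P_0$ alone already gives local constancy of $\operatorname{tr}P_0$, which is consistent with condition~2.
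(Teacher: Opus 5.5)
Your proof is correct and takes the same route as the paper. The pointwise identity comes from the spectral decomposition (Riesz functional calculus), and regularity comes from one contour fixed by the uniform gap plus $C^r$ dependence of the resolvent, integrated over that fixed contour. You also supply details the paper leaves implicit: the explicit bound $\|(z-\Delta_q)^{-1}\|\le 2/\gamma$ on $|z|=\gamma/2$, and the inductive justification for differentiating under the integral sign.
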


\begin{proof}
By the definition of a regular region, the zero eigenvalue is separated from the
nonzero spectrum by a uniform positive distance. Therefore, one can choose a
closed contour \(\Gamma\), uniformly over \(U\), which encloses the origin and no
nonzero spectrum.

By the standard spectral decomposition for finite-dimensional operators, or
equivalently by the Riesz functional calculus,
\[
\frac{1}{2\pi i}
\int_\Gamma
\bigl(z-\Delta_q(d,\lambda)\bigr)^{-1}\,dz
\]
is the spectral projection onto the generalized eigenspace corresponding to the
eigenvalues enclosed by \(\Gamma\).

Since \(\Delta_q(d,\lambda)\) is self-adjoint and the only eigenvalue enclosed by
\(\Gamma\) is \(0\), this projection is precisely the orthogonal projection onto
\[
E_0(d,\lambda)=\ker \Delta_q(d,\lambda).
\]
This proves the formula.

If \(\Delta_q(d,\lambda)\) is of class \(C^r\), then for each \(z\in\Gamma\) the
resolvent
\[
\bigl(z-\Delta_q(d,\lambda)\bigr)^{-1}
\]
also depends \(C^r\)-smoothly on \((d,\lambda)\). Since the contour \(\Gamma\) is
fixed, integration over \(\Gamma\) preserves \(C^r\)-regularity. Hence
\[
P_0(d,\lambda)
\]
is a \(C^r\) family of projections.
\end{proof}

\begin{proposition}[Zero-mode bundle]
Let \(U\subset A\times\Lambda\) be a connected regular region. Then
\[
E_0
:=
\bigsqcup_{(d,\lambda)\in U}E_0(d,\lambda)
\]
forms a finite-dimensional vector bundle over \(U\). Its fiber at
\((d,\lambda)\in U\) is
\[
E_0(d,\lambda)=\operatorname{Im}P_0(d,\lambda).
\]
\end{proposition}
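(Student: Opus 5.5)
The plan is to build the bundle structure directly from the continuous family of orthogonal projections \(P_0(d,\lambda)\) given by the Riesz projection proposition. We work on the connected regular region \(U\), where \((d,\lambda)\mapsto P_0(d,\lambda)\) is continuous (and \(C^r\) whenever \(\Delta_q\) is). The total space is realized as a subset of the trivial bundle,
\[
E_0=\{((d,\lambda),v)\in U\times H_q \mid P_0(d,\lambda)v=v\},
\]
with the obvious projection to \(U\). Each fiber is \(\operatorname{Im}P_0(d,\lambda)=\ker\Delta_q(d,\lambda)=E_0(d,\lambda)\), and by the constant-dimension condition in the definition of a regular region all fibers have a common dimension \(m\). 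What remains is to produce local trivializations.

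\textbf{Local trivialization.} Fix \(x_0=(d_0,\lambda_0)\in U\) and write \(P=P_0(x)\), \(P_0=P_0(x_0)\). By continuity there is a neighbourhood \(V\ni x_0\) on which \(\|P-P_0\|<1\). On \(V\) define
\[
\Phi_x:=P\big|_{E_0(x_0)}:E_0(x_0)\to E_0(x).
\]
To see that \(\Phi_x\) is injective, take \(v\in E_0(x_0)\) with \(Pv=0\). Then \(\|v\|=\|(P_0-P)v\|<\|v\|\), which forces \(v=0\). Because both spaces have dimension \(m\), \(\Phi_x\) is a linear isomorphism. The map
\[
V\times E_0(x_0)\to E_0|_V,\qquad (x,v)\mapsto (x,P(x)v)
\]
is therefore a fiberwise linear bijection. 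It is continuous (respectively \(C^r\)), and its inverse \((x,w)\mapsto (x,\Phi_x^{-1}w)\) is also continuous (respectively \(C^r\)), since inverting an invertible matrix is smooth in its entries once \(\Phi_x\) is written in fixed orthonormal bases. Equivalently, if \(e_1,\dots,e_m\) is an orthonormal basis of \(E_0(x_0)\), the vectors \(P(x)e_i\) form a continuous local frame on \(V\). Applying Gram--Schmidt to this frame gives a local orthonormal frame if desired.

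\textbf{Transition maps.} On an overlap of two such neighbourhoods, based at \(x_0\) and \(x_1\), the transition map is \(\Phi^{(1)}_x{}^{-1}\circ\Phi^{(0)}_x\). This is a continuous (respectively \(C^r\)) map into \(GL(m)\), because it is a composition of the maps just described. Hence \(E_0\) is a rank-\(m\) vector bundle, and a \(C^r\) subbundle of \(U\times H_q\) when \(\Delta_q\) is \(C^r\).

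The main obstacle is proving that \(\Phi_x\) is invertible, including the regularity of its inverse. This is exactly where the uniform spectral gap enters: it is what makes \(P_0\) continuous, via the fixed contour \(\Gamma\) in the Riesz formula. Constant rank alone would not suffice without that continuity. I expect the estimate \(\|P-P_0\|<1\) to close this step. As a cross-check, an alternative route is Kato's intertwiner \(U(x)=\bigl(PP_0+(1-P)(1-P_0)\bigr)\bigl(1-(P-P_0)^2\bigr)^{-1/2}\), which is a unitary carrying \(E_0(x_0)\) onto \(E_0(x)\) and would give unitary trivializations.
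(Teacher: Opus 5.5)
Your proof is correct and follows the same route as the paper: the paper simply asserts that constant fiber dimension together with continuity (or $C^r$-regularity) of $P_0$ from the Riesz formula yields a vector bundle, and your local trivialization $v\mapsto P_0(x)v$ on $E_0(x_0)$, valid where $\|P_0(x)-P_0(x_0)\|<1$, supplies the standard details that the paper leaves implicit. The one loose step is the appeal to ``fixed orthonormal bases'' for continuity of $\Phi_x^{-1}$, since the target $E_0(x)$ moves with $x$; it is repaired by writing $\Phi_x^{-1}w=\bigl(P_0(x_0)P_0(x)|_{E_0(x_0)}\bigr)^{-1}P_0(x_0)w$, which inverts an endomorphism of the fixed space $E_0(x_0)$ lying within distance $<1$ of the identity, or equivalently by using the continuous frame $P_0(x)e_i$ that you also mention.
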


\begin{proof}
By the definition of a regular region,
\[
\dim E_0(d,\lambda)
\]
is constant on \(U\). Moreover, by the preceding proposition, the projection
\[
P_0(d,\lambda)
\]
depends continuously, or \(C^r\)-smoothly, on \((d,\lambda)\).

Therefore,
\[
\operatorname{Im}P_0(d,\lambda)
\]
defines a continuous, or \(C^r\), family of fixed-dimensional subspaces of the
fixed finite-dimensional Hilbert space \(H_q\).
Hence
\[
E_0
=
\bigsqcup_{(d,\lambda)\in U}E_0(d,\lambda)
\]
is a vector bundle over \(U\).
\end{proof}

\begin{definition}[Hodge zero-mode bundle]
Let \(U\subset A\times\Lambda\) be a connected regular region. The vector bundle
\[
E_0\to U
\]
constructed above is called the \emph{\(q\)-th Hodge zero-mode bundle}, or the
\emph{instantaneous zero-mode bundle}, over \(U\).
\end{definition}

\begin{definition}[Singular set and regular part]
Let \(\Sigma\subset A\times\Lambda\) be the set of points at which at least one
of the following occurs:
\begin{enumerate}
    \item \(\dim E_0(d,\lambda)\) is not locally constant;
    \item the spectral gap between the zero eigenvalue and the nonzero spectrum
    is lost.
\end{enumerate}
We call \(\Sigma\) the \emph{singular set}. Its complement
\[
(A\times\Lambda)^\circ
:=
(A\times\Lambda)\setminus\Sigma
\]
is called the \emph{regular part}.
\end{definition}

\begin{remark}
The geometric theory developed in this paper is not a globally smooth theory
over the entire parameter space. At critical hypersurfaces where the spectral gap closes, the smooth vector-bundle picture need not hold.
Such points are excluded as part of the singular set \(\Sigma\). The Berry
connections, curvature, and holonomy considered in this work are understood as
geometric quantities of the Hodge zero-mode bundle defined on regular regions
away from the singular set.

Strictly speaking, if the regular region \(U\) is not connected, the
zero-mode multiplicity may be constant only on each connected component and
may differ from one component to another. In that case, \(E_0\) should not be
regarded as a single vector bundle of fixed rank over all of \(U\), but rather as
a collection of vector bundles over the connected components of \(U\). In the
following discussion, we always work on one connected component on which
\(\dim E_0(d,\lambda)=m\) is fixed. For simplicity, we continue to call this object
the zero-mode bundle.
\end{remark}

\subsection{Natural connection and Berry connection}

Let \(U\subset A\times\Lambda\) be a regular region, and consider the zero-mode bundle
\[
E_0\to U.
\]
For each \((d,\lambda)\in U\), the fiber \(E_0(d,\lambda)\) is a subspace of the common Hilbert space \(H_q\), and \(P_0(d,\lambda)\) is the orthogonal projection onto it. Hence \(P_0\) induces a natural connection on \(E_0\).

\begin{definition}
For a local section \(s\in\Gamma(U,E_0)\), define
\[
\nabla s:=P_0(ds).
\]
\end{definition}

\begin{proposition}
The operator \(\nabla\) defines a connection on the vector bundle \(E_0\to U\).
\end{proposition}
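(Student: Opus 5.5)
To prove that \(\nabla s := P_0(ds)\) is a connection on \(E_0\to U\), I will check the three defining properties: \(\nabla\) sends smooth sections of \(E_0\) to smooth \(E_0\)-valued one-forms, it is additive, and it satisfies the Leibniz rule. Throughout, a local section \(s\in\Gamma(U,E_0)\) is regarded as a \(C^r\) map \(s:U\to H_q\) with \(P_0 s = s\). Because \(H_q\) is a fixed finite-dimensional Hilbert space, its trivial connection \(d\) acts componentwise on such maps and produces \(ds\in\Omega^1(U,H_q)\).

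\textbf{Well-definedness.} By the Riesz projection proposition, \((d,\lambda)\mapsto P_0(d,\lambda)\) is of class \(C^r\). Hence \(P_0(ds)=\sum_j P_0\,\partial_j s\,dx^j\) is a \(C^{r-1}\) one-form, in the local coordinates \(x^j\) on \(U\subset A\times\Lambda\). Its values lie pointwise in \(\operatorname{Im}P_0(d,\lambda)=E_0(d,\lambda)\), so \(\nabla s\in\Omega^1(U,E_0)\) as required. The order of regularity drops by one, as it does for any connection. If we want smooth sections to map to smooth forms, we take \(r=\infty\), or simply work in the \(C^r\) category.

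\textbf{Linearity and Leibniz rule.} Additivity \(\nabla(s_1+s_2)=\nabla s_1+\nabla s_2\) follows immediately from the linearity of \(d\) and of \(P_0\). For a scalar function \(f\in C^\infty(U)\) we compute
\[
\nabla(fs)=P_0\bigl(df\otimes s+f\,ds\bigr)=df\otimes P_0 s+f\,P_0(ds)=df\otimes s+f\,\nabla s .
\]
The middle equality holds because \(P_0\) acts pointwise, so it commutes with multiplication by the scalar one-form \(df\). The last equality uses \(P_0s=s\).

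\textbf{Local form and compatibility (optional).} The only point needing care, and the nearest thing to an obstacle, is that \(E_0\) is a bundle with the correct smooth structure, so that "section" means exactly a \(C^r\) map into \(H_q\) with values in the fibers. This is supplied by the zero-mode bundle proposition. For a concrete check, I can take a local frame \(e_a=P_0(x)v_a\), where \(v_a\) is a basis of \(E_0(x_0)\); this frame is \(C^r\) and linearly independent near \(x_0\) because \(P_0\) is continuous. For \(s=\sum f^a e_a\), the properties above give \(\nabla s=\sum (df^a\,e_a+f^a\,P_0 de_a)\). This exhibits \(\nabla\) locally as \(d\) plus a matrix-valued connection one-form, namely the Berry connection.

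We may also note that \(\nabla\) is metric-compatible: \(d\langle s_1,s_2\rangle=\langle \nabla s_1,s_2\rangle+\langle s_1,\nabla s_2\rangle\). This holds because \(\langle ds_1,s_2\rangle=\langle ds_1,P_0s_2\rangle=\langle P_0ds_1,s_2\rangle\), using that \(P_0\) is self-adjoint. This property is not required by the statement, but it will be useful later.
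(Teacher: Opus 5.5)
Your proof is correct and takes the same route as the paper: linearity follows from the definition, and the Leibniz rule follows from \(P_0(df\otimes s)=df\otimes s\) (because \(P_0 s=s\)) together with \(P_0(f\,ds)=f\,P_0(ds)\). Your extra check that \(\nabla s\) takes values in \(E_0=\operatorname{Im}P_0\) with the right regularity is a point the paper leaves implicit, and your optional remarks on local frames and metric compatibility are accurate but not needed for the statement.
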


\begin{proof}
Linearity of \(\nabla\) follows from the definition. Let \(f\in C^\infty(U)\) and \(s\in\Gamma(U,E_0)\). Then
\[
\nabla(fs)
=
P_0(d(fs))
=
P_0(df\otimes s+f\,ds).
\]
Since \(s\) is a section of \(E_0\), we have \(P_0s=s\) pointwise. Therefore
\[
P_0(df\otimes s)=df\otimes s,
\qquad
P_0(f\,ds)=fP_0(ds)=f\nabla s.
\]
Thus
\[
\nabla(fs)=df\otimes s+f\nabla s.
\]
This is the Leibniz rule, so \(\nabla\) is a connection.
\end{proof}

Let \(\operatorname{rank}E_0=m\), and choose a local orthonormal frame
\[
\psi_1(d,\lambda),\dots,\psi_m(d,\lambda)
\]
on \(U\). Arrange these vectors as columns into the matrix
\[
\Psi(d,\lambda)
:=
(\psi_1(d,\lambda),\dots,\psi_m(d,\lambda)).
\]
Then
\[
\Psi^\ast\Psi=I_m,
\qquad
P_0=\Psi\Psi^\ast.
\]

Every local section \(s\) can be written uniquely as
\[
s=\Psi\xi,
\]
where \(\xi:U\to\mathbb C^m\) is a coefficient vector. In the real case, \(\mathbb C^m\) is replaced by \(\mathbb R^m\). Then
\[
\nabla s
=
\Psi\bigl(d\xi+\Psi^\ast d\Psi\,\xi\bigr).
\]
Thus the connection \(1\)-form in the frame \(\Psi\) is
\[
A:=\Psi^\ast d\Psi.
\]

\begin{definition}
The matrix-valued \(1\)-form
\[
A=\Psi^\ast d\Psi
\]
is called the Berry connection on the zero-mode bundle.
\end{definition}

Differentiating \(\Psi^\ast\Psi=I_m\), we obtain
\[
d\Psi^\ast\,\Psi+\Psi^\ast d\Psi=0.
\]
Hence
\[
A^\ast=-A.
\]
In particular, in the complex case, \(A\) is a \(\mathfrak u(m)\)-valued \(1\)-form. In the real case, it is an \(\mathfrak o(m)\)-valued \(1\)-form.

If the local orthonormal frame is changed by
\[
\Psi'=\Psi g,
\]
then in the complex case \(g:U\to U(m)\), while in the real case \(g:U\to O(m)\). The corresponding connection \(1\)-form \(A'\) satisfies
\[
A'=g^{-1}Ag+g^{-1}dg.
\]
Thus the local expression of the Berry connection is gauge-dependent.

\subsection{Curvature and holonomy}

We now define the curvature and holonomy associated with the Berry connection.

\begin{definition}
The curvature of the Berry connection \(A\) is defined by
\[
F:=dA+A\wedge A.
\]
\end{definition}

If \(U\) is a two-dimensional parameter space with local coordinates \((d,t)\), then
\[
A=A_d\,dd+A_t\,dt,
\]
and
\[
F=F_{dt}\,dd\wedge dt,
\]
where
\[
F_{dt}
=
\partial_dA_t-\partial_tA_d+[A_d,A_t].
\]

The curvature can also be expressed solely in terms of the projection \(P_0\).

\begin{proposition}
The curvature satisfies
\[
F=P_0[dP_0,dP_0]P_0.
\]
\end{proposition}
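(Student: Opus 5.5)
The plan is to read the statement as an identity between $\operatorname{End}(E_0)$-valued $2$-forms. Concretely, I will show that the curvature operator $\nabla^2$ of the connection $\nabla s=P_0(ds)$ acts on sections of $E_0$ by $P_0\,dP_0\wedge dP_0\,P_0$. Here $[dP_0,dP_0]$ is interpreted as the graded bracket of the matrix-valued $1$-form $dP_0$ with itself, normalized as $\tfrac12[dP_0,dP_0]=dP_0\wedge dP_0$; I will state this convention explicitly. I will then check that in the orthonormal frame $\Psi$ this operator has matrix $F=dA+A\wedge A$, i.e.
\[
\Psi^\ast\bigl(P_0\,dP_0\wedge dP_0\,P_0\bigr)\Psi=dA+A\wedge A,
\qquad
\Psi F\Psi^\ast=P_0\,dP_0\wedge dP_0\,P_0 .
\]
Smoothness of $P_0$ on a regular region (the Riesz projection proposition) justifies all differentiations.

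The first step is the basic projector identity. Differentiating $P_0^2=P_0$ gives $dP_0\,P_0+P_0\,dP_0=dP_0$, and hence $P_0\,dP_0\,P_0=0$. This is the only algebraic input.

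The second step is the frame-free computation. For a section $s$ with $P_0s=s$, extend $\nabla$ to $E_0$-valued forms by $\nabla\omega=P_0\,d\omega$. Then
\[
\nabla^2 s=P_0\,d(P_0\,ds)=P_0\,dP_0\wedge ds,
\]
using $d^2s=0$. Substituting $ds=dP_0\,s+P_0\,ds$ (obtained by differentiating $s=P_0s$), the second term contributes $P_0\,dP_0\,P_0\wedge ds=0$ by the first step. What remains is
\[
\nabla^2s=P_0\,dP_0\wedge dP_0\,s=P_0\,dP_0\wedge dP_0\,P_0\,s .
\]
This expression is manifestly tensorial in $s$ and gauge covariant.

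The third step identifies this with $F$ in the frame. Write $s=\Psi\xi$. Since $\nabla s=\Psi(d\xi+A\xi)$, iterating gives $\nabla^2 s=\Psi(dA+A\wedge A)\xi$; the $d^2\xi$ term vanishes and the cross terms $\pm A\wedge d\xi$ cancel. Comparing with the second step and applying $\Psi^\ast$ on the left yields the claim.

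As an independent check, I would also expand directly using $P_0=\Psi\Psi^\ast$, $dP_0=d\Psi\,\Psi^\ast+\Psi\,d\Psi^\ast$, $\Psi^\ast\Psi=I_m$ and $d\Psi^\ast\,\Psi=-A$. This gives $\Psi^\ast dP_0\,\Psi=A+d\Psi^\ast\Psi=0$, and from this $\Psi^\ast dP_0\wedge dP_0\Psi=d\Psi^\ast\wedge d\Psi+A\wedge A$. Finally, $dA=d(\Psi^\ast d\Psi)=d\Psi^\ast\wedge d\Psi$.

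The expected main obstacle is bookkeeping rather than substance. Signs for wedge products of matrix-valued $1$-forms, the factor $2$ hidden in the bracket notation $[dP_0,dP_0]$, and the distinction between the operator on $H_q$ and its $m\times m$ matrix in the frame are all easy to get wrong. I will fix the convention $[\alpha,\beta]=\alpha\wedge\beta+\beta\wedge\alpha$ for $1$-forms, together with the factor $\tfrac12$ above, at the outset.
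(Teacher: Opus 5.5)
Your proposal is correct, and its main line differs from the paper's. The paper only asserts that the identity follows from a standard computation with $P_0=\Psi\Psi^\ast$ and $A=\Psi^\ast d\Psi$, which is exactly what you relegate to your ``independent check.'' Your intermediate identities there are right: $\Psi^\ast dP_0\,\Psi=0$, $\Psi^\ast dP_0\wedge dP_0\,\Psi=d\Psi^\ast\wedge d\Psi+A\wedge A$, and $dA=d\Psi^\ast\wedge d\Psi$.

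Your primary argument instead computes $\nabla^2 s=P_0\,d(P_0\,ds)$ intrinsically, using only $P_0\,dP_0\,P_0=0$. It then matches the result to $dA+A\wedge A$ via $\nabla^2(\Psi\xi)=\Psi(dA+A\wedge A)\xi$. Your extension $\nabla\omega=P_0\,d\omega$ obeys the graded Leibniz rule, so this $\nabla^2$ is the curvature $F_\nabla$ of the appendix.

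The intrinsic route shows that $P_0\,dP_0\wedge dP_0\,P_0$ is the curvature of the projection connection itself. Its gauge covariance, which the paper stresses right after the proposition, then comes for free rather than being read off a frame expansion. Your route also makes explicit two points the paper's one-line statement suppresses:
\begin{enumerate}
\item The equality is really $\Psi F\Psi^\ast=P_0\,dP_0\wedge dP_0\,P_0$, i.e.\ it relates an $m\times m$ matrix-valued form to an operator on $H_q$.
\item $[dP_0,dP_0]$ must be read as $dP_0\wedge dP_0$, i.e.\ componentwise $F_{ab}=P_0[\partial_aP_0,\partial_bP_0]P_0$. This matches the paper's own usage in the stability appendix and in the discretized $F_{dt}$. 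With the unnormalized graded bracket, the stated identity would be off by a factor $2$.
\end{enumerate}
The paper's frame computation is shorter but leaves both points implicit.
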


\begin{proof}
This follows by a standard computation using \(P_0=\Psi\Psi^\ast\) and \(A=\Psi^\ast d\Psi\).
\end{proof}

This projection formula is particularly important in the present work. The connection form \(A\) depends on the choice of local frame, whereas the projection \(P_0\) is intrinsically defined as an operator on the common Hilbert space. Thus, by writing the curvature as
\[
F=P_0[dP_0,dP_0]P_0,
\]
we obtain a gauge-independent description of the noncommutativity of local zero-mode transport.

Next, let \(\gamma:[0,1]\to U\) be a piecewise \(C^1\) curve. The parallel-transport equation in a local frame is
\[
\frac{d}{ds}\xi(s)
+
A(\dot\gamma(s))\xi(s)
=
0.
\]
Its solution defines a linear map
\[
\Pi_\gamma:E_0(\gamma(0))\to E_0(\gamma(1)).
\]

\begin{definition}
If \(\gamma\) is a closed curve, that is, if \(\gamma(0)=\gamma(1)=x\), then
\[
U_\gamma:=\Pi_\gamma:E_0(x)\to E_0(x)
\]
is called the holonomy along \(\gamma\).
\end{definition}

In a local frame, the holonomy is written as
\[
U_\gamma
=
\mathcal{P}\exp\left(-\int_\gamma A\right),
\]
where \(\mathcal{P}\) denotes path ordering.

Under a gauge transformation \(\Psi'=\Psi g\), the matrix representation of the holonomy transforms by conjugation:
\[
U_\gamma'
=
g(\gamma(0))^{-1}U_\gamma g(\gamma(0)).
\]
Therefore,
\[
\operatorname{tr}(U_\gamma),
\qquad
\det(U_\gamma),
\qquad
\operatorname{Spec}(U_\gamma)
\]
are gauge-invariant.

\subsection{Stability on regular regions}

Finally, we state the local stability of the zero-mode projection and the curvature on regular regions. To obtain an intrinsic stability statement that does not depend on gauge choices, we do not directly compare local connection forms \(A\) or holonomy matrices. Instead, we focus on \(P_0\), which can be compared as an operator on the common Hilbert space, and on the curvature expressed by the projection formula
\[
F=P_0[dP_0,dP_0]P_0.
\]

Let \(U\subset A\times\Lambda\) be a regular region, and let
\[
L(d,\lambda),
\qquad
\widetilde L(d,\lambda)
\]
be two self-adjoint operator families on \(H_q\). Here we regard them as common-Hilbert-space realizations of ordinary Hodge Laplacians, or as perturbations of such realizations. Let
\[
P_0,\qquad \widetilde P_0
\]
denote the corresponding zero-mode projections. Define the corresponding curvatures by
\[
F:=P_0[dP_0,dP_0]P_0,
\qquad
\widetilde F:=
\widetilde P_0[d\widetilde P_0,d\widetilde P_0]\widetilde P_0.
\]

\begin{theorem}[Quantitative stability on regular regions]
Assume that:
\begin{enumerate}
\item \(L\) and \(\widetilde L\) are of class \(C^2\) on \(U\) and have the same constant zero-mode multiplicity \(m\);
\item there exists a common spectral-gap constant \(\gamma>0\) such that, for every \(x\in U\),
\[
\operatorname{Spec}(L(x)),
\operatorname{Spec}(\widetilde L(x))
\subset
\{0\}\cup[\gamma,\infty);
\]
\item \(L\) and \(\widetilde L\) have uniformly bounded \(C^2\) norms on \(U\).
\end{enumerate}
Then there exist constants \(C_0,C_1,C_2>0\) such that
\[
\|P_0-\widetilde P_0\|_{C^0(U)}
\le
C_0\|L-\widetilde L\|_{C^0(U)},
\]
\[
\|dP_0-d\widetilde P_0\|_{C^0(U)}
\le
C_1\|L-\widetilde L\|_{C^1(U)},
\]
and
\[
\|F-\widetilde F\|_{C^0(U)}
\le
C_2\|L-\widetilde L\|_{C^2(U)}.
\]
In particular, on a regular region with a uniform spectral gap, the zero-mode projection is Lipschitz stable with respect to \(C^0\)-perturbations of the operator family, and the curvature is Lipschitz stable with respect to \(C^2\)-perturbations of the operator family.
\end{theorem}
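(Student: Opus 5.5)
The plan is to work throughout with the Riesz representation of the zero-mode projection, as in the proposition on the Riesz projection. The common spectral gap lets us fix one contour for both families. Take \(\Gamma=\{|z|=\gamma/2\}\), which encloses \(0\) and no nonzero spectrum of \(L(x)\) or \(\widetilde L(x)\) for every \(x\in U\). Write \(R(z)=(z-L)^{-1}\) and \(\widetilde R(z)=(z-\widetilde L)^{-1}\). Since both operators are self-adjoint and \(\operatorname{dist}(z,\operatorname{Spec})\ge\gamma/2\) on \(\Gamma\), we get the uniform bound \(\|R(z)\|,\|\widetilde R(z)\|\le 2/\gamma\) for all \(z\in\Gamma\) and \(x\in U\). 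Every constant below will depend only on \(\gamma\), on the length \(\pi\gamma\) of \(\Gamma\), and on the common bound \(M\) for the \(C^2\) norms of \(L,\widetilde L\).

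\textbf{Step 1 (\(C^0\) estimate).} Apply the second resolvent identity \(R-\widetilde R=R(L-\widetilde L)\widetilde R\). Integrating over \(\Gamma\) gives
\[
P_0-\widetilde P_0=\frac{1}{2\pi i}\int_\Gamma R(z)(L-\widetilde L)\widetilde R(z)\,dz .
\]
Hence \(\|P_0-\widetilde P_0\|\le \frac{1}{2\pi}\cdot\pi\gamma\cdot(2/\gamma)^2\|L-\widetilde L\|\), so one may take \(C_0=2/\gamma\).

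\textbf{Step 2 (\(C^1\) estimate).} Differentiate under the integral, which is justified because \(\Gamma\) is fixed and the integrand is \(C^2\). This gives \(dR=R\,(dL)\,R\), and therefore \(dP_0=\frac{1}{2\pi i}\int_\Gamma R\,dL\,R\,dz\). Expand the difference telescopically:
\[
R\,dL\,R-\widetilde R\,d\widetilde L\,\widetilde R=(R-\widetilde R)\,dL\,R+\widetilde R\,(dL-d\widetilde L)\,R+\widetilde R\,d\widetilde L\,(R-\widetilde R).
\]
Bound each factor \(R-\widetilde R\) using Step 1's identity, and bound the remaining factors by \(2/\gamma\) and \(M\). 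This yields \(\|dP_0-d\widetilde P_0\|\le C_1(\|L-\widetilde L\|+\|dL-d\widetilde L\|)\) with \(C_1\) of order \(M/\gamma^2+1/\gamma\).

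\textbf{Step 3 (curvature).} Differentiating once more gives \(\partial_i\partial_jR=R\,\partial_i L\,R\,\partial_jL\,R+R\,\partial_jL\,R\,\partial_iL\,R+R\,\partial_i\partial_jL\,R\). The same telescoping argument bounds \(\|\partial^2P_0-\partial^2\widetilde P_0\|\) by \(C\|L-\widetilde L\|_{C^2}\). This shows that the \(C^2\) norm is sufficient for curvature as well.

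For the curvature itself, \(F=P_0[dP_0,dP_0]P_0\) is multilinear in \(P_0\) and \(dP_0\). By Steps 1–2, \(\|P_0\|\le1\) and \(\|dP_0\|\le \frac{1}{2\pi}\cdot\pi\gamma\cdot(2/\gamma)^2M=2M/\gamma\). Writing \(F-\widetilde F\) as a telescoping sum of four terms, in each of which exactly one factor is replaced by its difference, gives
\[
\|F-\widetilde F\|\le C\bigl(\|P_0-\widetilde P_0\|+\|dP_0-d\widetilde P_0\|\bigr)\le C\|L-\widetilde L\|_{C^1}\le C_2\|L-\widetilde L\|_{C^2}.
\]
Here \(C_2=C_1(2+8M^2/\gamma^2)\) up to harmless factors.

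\textbf{Main obstacle.} The only delicate point is ensuring that all constants are uniform over \(U\). This is exactly what the common-gap hypothesis provides: it gives one fixed contour for both families and a resolvent bound independent of \(x\), and the uniform \(C^2\) bounds control \(dL\) and \(d^2L\). Without a common gap, the contour would have to vary with \(x\), and the Lipschitz constants could blow up near \(\Sigma\).
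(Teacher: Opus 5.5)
Your proposal is correct and follows essentially the same route as the paper's proof in the appendix. Both use a fixed contour of radius $\gamma/2$ with the uniform resolvent bound $2/\gamma$, the resolvent identity (giving $C_0=2/\gamma$), the same three-term telescoping for $dP_0$, and a telescoping of $P_0[dP_0,dP_0]P_0$ controlled by $\|P_0-\widetilde P_0\|+\|dP_0-d\widetilde P_0\|$. Your second-derivative estimate in Step 3 is never used and can be dropped: as both you and the paper observe, $C^1$ control of $L-\widetilde L$ already gives the $C^0$ curvature bound.
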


\begin{proof}[Proof sketch]
The zero-mode projection is expressed as the Riesz projection
\[
P_0
=
\frac{1}{2\pi i}
\int_\Gamma
(z-L)^{-1}\,dz.
\]
Hence the estimate for \(P_0-\widetilde P_0\) follows from the resolvent identity. Next, differentiating the Riesz projection formula gives the estimate for \(dP_0-d\widetilde P_0\). Finally, using the projection formula for curvature,
\[
F=P_0[dP_0,dP_0]P_0,
\]
the estimate for \(F-\widetilde F\) follows from the estimates for \(P_0\) and \(dP_0\).
\end{proof}

The constants in the estimates deteriorate as the spectral gap \(\gamma\) becomes smaller. Therefore, near the singular set, such uniform stability generally cannot be expected. This is not a defect of the theory, but reflects the loss of separation between zero modes and nonzero modes.

\subsection{Choice of the Ordinary Hodge Laplacian}

We finally explain why this paper uses the ordinary Hodge Laplacian rather than the persistent Laplacian.

If the goal is to realize fixed-birth persistent homology
\[
H_q^{b_0,d}(\lambda)
\]
as a zero-mode space, then the persistent Laplacian is the natural operator. The persistent Laplacian is designed to give an operator-theoretic realization of the persistent homology associated with the inclusion
\[
K(b_0,\lambda)\subset K(d,\lambda),
\]
and is therefore the most direct object for studying fixed-birth persistent feature spaces.

However, the curvature and holonomy introduced in this work are differential-geometric quantities associated with a family of fibers over parameter space. In particular, the curvature
\[
F=P_0[dP_0,dP_0]P_0
\]
is a local quantity measuring how the zero-mode projection \(P_0(d,\lambda)\) changes noncommutatively under infinitesimal variations in the \(d\)-direction and the \(\lambda\)-direction. From this viewpoint, the ordinary Hodge Laplacian, which is intrinsically attached to the complex \(K(d,\lambda)\) at each parameter point \((d,\lambda)\), is more natural for describing local transport geometry.

When the ordinary Hodge Laplacian is used, the fiber is
\[
E_0(d,\lambda)
=
\ker\Delta_q^{\mathrm{Hodge}}(d,\lambda)
\cong
H_q(K(d,\lambda)).
\]
This space is canonically determined by the complex at the point \((d,\lambda)\) itself. Therefore, the resulting zero-mode bundle is a family of instantaneous homology spaces placed over parameter space. In this case, the curvature directly reflects the local reorganization of the homology space of \(K(d,\lambda)\) itself.

By contrast, when the persistent Laplacian is used, the fiber is
\[
\ker\Delta_{q,b_0}^{\mathrm{pers}}(d,\lambda)
\cong
\operatorname{Im}
\bigl(
H_q(K(b_0,\lambda))
\to
H_q(K(d,\lambda))
\bigr).
\]
This space is not determined solely by the complex at \((d,\lambda)\), but by its relation to the fixed birth level \(b_0\). Thus, the zero-mode space of the persistent Laplacian is not a homology space locally attached to a parameter point, but rather a space involving the persistence window from the fixed birth level to the death level.

This distinction is important for the interpretation of curvature. The curvature obtained from the persistent Laplacian contains not only the local reorganization of the homology space, but also the change in feature selection imposed by the fixed birth level \(b_0\). Thus, when the curvature is large, it may reflect either a reorganization of the homological structure of the current complex \(K(d,\lambda)\), or a change in the selection of persistent features caused by the fixed birth constraint, or a mixture of both.

Moreover, the choice of \(b_0\) is not merely a gauge choice. A gauge transformation changes only the frame of the same vector bundle, whereas changing \(b_0\) generally changes the fiber
\[
\operatorname{Im}
\bigl(
H_q(K(b_0,\lambda))
\to
H_q(K(d,\lambda))
\bigr)
\]
itself. Therefore, the fixed birth level is not an auxiliary choice that only changes the representation of the connection; it is an additional structure that may change the bundle itself.

For this reason, if the goal is first to isolate the local zero-mode transport geometry over parameter space, it is theoretically more natural to use the ordinary Hodge Laplacian. The ordinary Hodge Laplacian is intrinsically determined by the complex at each parameter point and does not depend on the choice of a fixed birth level or a persistent upper operator. As a result, the curvature and holonomy obtained from it more directly reflect the transport, mixing, reorganization, and memory of the parameter-dependent homology spaces themselves.

Thus, the role of the two Laplacians can be summarized as follows. The persistent Laplacian is the natural operator for realizing fixed-birth persistent homology as a zero-mode space. On the other hand, the ordinary Hodge Laplacian gives an intrinsic zero-mode realization of the homology space at each parameter point, and is therefore a more standard and more interpretable object for detecting local and global transport geometry such as curvature and holonomy.

In this sense, the formulation in this paper uses the ordinary Hodge Laplacian rather than the persistent Laplacian. This does not diminish the importance of persistent homology. Rather, it clarifies the transport geometry of homological zero-mode spaces themselves before introducing the additional persistence window associated with a fixed birth level. Curvature describes local reorganization, while holonomy describes global memory accumulated along closed loops. This basic geometric mechanism appears most clearly in the zero-mode bundle associated with the ordinary Hodge Laplacian.

\section{Geometric Interpretation: Reorganization, Memory, and Dynamic Homological Geometry}

In this section, we explain the geometric meaning of the connection, curvature, and holonomy introduced in Section~3. For parameter-dependent topological data, the essential issue is not only which homological features exist at each parameter value, but also how the corresponding homological feature space moves over parameter space. The connection provides the rule of comparison, the curvature measures local reorganization, and the holonomy records the cumulative global effect along closed loops.

\subsection{Dynamic homological geometry}

In one-parameter persistent homology, barcodes and persistence diagrams provide a concise description of the birth and death of topological features. They indicate at which scales features appear and how long they persist, thereby encoding both existence and robustness. However, when an additional parameter is present, such as time or an external field, listing persistence diagrams at each parameter value is no longer sufficient to describe the full behavior of the underlying homological structure.

The reason is that, in the presence of an additional parameter, what evolves is not merely the birth--death data, but the homological feature space itself. Even when the Betti number remains constant, the zero-mode space \(E_0(d,\lambda)\) of the ordinary Hodge Laplacian may point in different directions inside the common ambient Hilbert space. Moreover, this direction may vary continuously with the parameters, mix locally, and accumulate into a nontrivial transformation after one circuit along a closed path. Therefore, to understand parameter-dependent homological structure, one must consider not only what exists at each point, but also how the feature spaces are transported from point to point.

From this perspective, the Hodge zero-mode bundle provides the dynamical object associated with parameter-dependent homology, and the connection specifies how to compare nearby fibers. The resulting curvature and holonomy describe local reorganization and global memory, respectively. In this way, the framework developed in this paper provides a language for understanding homology not as a mere sequence of static snapshots, but as a geometric process in which time evolution and scale dependence interact.

\subsection{Reorganization of homological feature spaces}

To understand this phenomenon intuitively, it is useful to consider the simplest nontrivial example. Suppose that, in some fixed homological degree, the homology is always two-dimensional, so that there are always ``two important features.'' Then the Betti number is constantly equal to \(2\), and each persistence diagram appears to indicate the existence of two dominant features at every parameter value.

Nevertheless, the internal structure of the homology space may still vary. At one parameter value, a natural basis may be
\[
(\psi_1,\psi_2),
\]
whereas at another parameter value, the more natural basis may be
\[
\left(\frac{\psi_1+\psi_2}{\sqrt2},\frac{\psi_1-\psi_2}{\sqrt2}\right).
\]
Moreover, after transport along a closed path, one may obtain an exchange such as
\[
(\psi_1,\psi_2)\mapsto (\psi_2,\psi_1).
\]

What changes here is not the number of features, but the internal structure of the zero-mode space, namely, which directions should be naturally associated with which features and how those directions mix. In this paper, we refer to such changes---recombination, mixing, and exchange of basis directions inside the homological feature space---as \emph{reorganization}.

Geometrically, such reorganization typically occurs where several features approach one another and can no longer be easily distinguished as independent objects. One may imagine, for example, two holes gradually approaching each other, or two loops becoming connected by a thin corridor and thus nearly merging. In such a situation, one may still be able to choose generators corresponding to the ``left hole'' and the ``right hole'' at one time and one scale, but after a slight change in time or scale, this distinction may no longer be natural.

In other words, what changes here is not the number of holes. Rather, what changes is which linear combinations should be regarded as the natural independent generators. At one parameter value, two generators may correspond to two spatially separated holes, while at another parameter value, the sum and difference of those generators may provide a more natural description. In this sense, reorganization concerns not the mere existence of features, but the rearrangement of the relations among them.

To make this more explicit, consider again generators
\[
\psi_1,\qquad \psi_2.
\]
If the two holes are well separated, then \(\psi_1\) and \(\psi_2\) can be naturally regarded as corresponding to two distinct features. But as the two holes approach one another, it may become more natural to use the combinations
\[
\frac{\psi_1+\psi_2}{\sqrt2},\qquad \frac{\psi_1-\psi_2}{\sqrt2}
\]
instead. This does not mean that the original features have disappeared. Rather, it means that the natural coordinate system for describing the features has changed. The connection introduced in this paper describes this change of natural coordinates as transport.

\subsection{Curvature and local sensitivity}

Curvature is a local indicator of reorganization because it measures the noncommutativity of infinitesimal transport in two parameter directions along the zero-mode subspace. As shown in Section~3, the curvature is given by
\[
F=dA+A\wedge A,
\]
or equivalently by
\[
F=P_0[dP_0,dP_0]P_0.
\]
The latter expression is particularly revealing. The differential \(dP_0\) describes how the zero-mode subspace changes under parameter variation, and the commutator \([dP_0,dP_0]\) measures the failure of two infinitesimal variations to commute. The final projections by \(P_0\) restrict the result back to the zero-mode subspace itself. Hence the curvature measures how the mismatch between infinitesimal changes appears as an internal mixing inside the homological feature space.

More intuitively, consider moving first slightly in the time direction and then slightly in the scale direction, and compare this with doing the same two moves in the opposite order. If the resulting zero-mode spaces differ even infinitesimally, then there is local twisting in the transport of the feature space. The curvature measures that infinitesimal discrepancy. If the curvature vanishes, then the two infinitesimal transports are locally compatible. If it is large, then the internal structure of the zero-mode space is being rapidly reorganized in that neighborhood.

Therefore, a curvature heatmap should be understood as a visualization of where, in time and scale, the internal structure of the homology space changes most strongly. It indicates not merely where the number of features changes, but where nontrivial changes occur inside the feature space itself.

Moreover, large curvature also suggests increased sensitivity of the zero-mode description, that is, reduced robustness. In the standard eigenstate formula for Berry curvature, the curvature is generally expressed as a sum of couplings to other modes weighted by the inverse square of the spectral gaps. Consequently, when the spectral separation between the zero modes and nearby nonzero modes becomes weak, the zero-mode projection \(P_0\) becomes more sensitive to parameter variation, and the curvature tends to become large.

It is important, however, that the proximity of nearby nonzero modes is not itself the reorganization. Reorganization itself is the internal mixing within the zero-mode subspace, while the proximity of nonzero modes is the mechanism that makes the geometry sharper and more sensitive. In this sense, large curvature has a unified interpretation: geometrically, it indicates strong internal reorganization of the zero-mode space; analytically, it indicates reduced robustness of the zero-mode projection. These are two aspects of the same geometry.

\subsection{Holonomy, global memory, and the relation to vineyards}

While curvature is a local quantity, holonomy is a global quantity obtained by accumulating local effects along a closed path. If one parallel-transports the zero-mode bundle along a closed curve \(\gamma\), one returns to the same fiber at the starting point. The resulting linear transformation
\[
U_\gamma
\]
is the holonomy.

If \(U_\gamma=I\), then the transport along that loop is globally trivial, and the feature space returns to its original state. If \(U_\gamma\neq I\), then even though the Betti number at the beginning and at the end is the same, the internal structure of the feature space has undergone a nontrivial transformation. This means that the homological information may look unchanged at the level of dimension, while internally it has accumulated mixing, exchange, or more general monodromy.

For this reason, holonomy may be regarded as an indicator of \emph{global memory}. In particular, for periodically evolving data, it provides a natural quantity measuring what remains after one full cycle. It therefore goes beyond the question of how many features are present at the end, and instead records how the feature space remembers the path it has traversed.

This viewpoint does not oppose vineyards. Vineyards provide an intuitive picture of how individual topological features evolve by following the trajectories of points in the persistence diagram. When features are sufficiently well separated, this viewpoint is very effective and gives a clear geometric interpretation of feature tracking.

By contrast, the framework of the present paper provides a complementary viewpoint for situations in which vineyards become harder to apply. When several features approach one another in the persistence diagram, when many short-lived features appear, or when pointwise matching becomes unstable, the essential issue is not merely that labels on individual features become ambiguous. Rather, the internal structure of the homological feature space itself is undergoing substantial change. The curvature and holonomy introduced here are designed precisely to capture that underlying geometry.

In this sense, vineyards ask which point corresponds to which point, whereas the present framework asks how the whole feature space moves. The former is well suited for local feature tracking, while the latter is sensitive to internal mixing and global memory. The two viewpoints should therefore be understood as complementary rather than competing.

To summarize, the main gain of the present framework is threefold. First, curvature detects where in time and scale the internal structure of the homology space changes strongly. Second, holonomy records what global effect remains after those local changes accumulate around a loop. Third, by showing that the difficulty of pointwise tracking arises from strong reorganization of the homological feature space, the framework provides a geometric explanation for why tracking becomes difficult.

Accordingly, while the Betti number tells us how many features there are, curvature and holonomy tell us how the feature space itself moves. For homology with an additional parameter, this distinction is essential.

\section{Numerical Experiments}

In this section, we demonstrate through five numerical experiments how the proposed framework visualizes the transport geometry of parameter-dependent homology and how it complements the standard description based on persistence diagrams. In our approach, at each scale and external parameter, the zero-mode space of the ordinary Hodge Laplacian is regarded as the homological feature space, and its dependence on the scale and the external parameter is interpreted as transport on the zero-mode bundle. In this setting, curvature represents local reorganization of the homological feature space, while holonomy represents global memory accumulated along closed loops. In particular, from the viewpoint of this paper, even when the Betti number remains constant, the internal structure of the zero-mode space may still rotate, mix, and reorganize continuously, so dimensional information alone is not sufficient to describe the essential change.

The experiments are organized as follows. In Experiment~1, we first confirm that when vineyard-style tracking is stably defined, vineyard monodromy is reproduced by zero-mode holonomy. This shows that the proposed method is not an unrelated alternative to vineyards, but rather a description on the zero-mode bundle that is consistent with vineyard-type global transport information. In Experiment~2, we then show that even in regions where persistence-diagram points approach one another and vineyard-style pointwise tracking becomes unstable, the curvature defined from the zero-mode projection remains stably computable. In Experiment~3, we show that two systems that are nearly indistinguishable by persistence diagrams and standard diagram-level quantities can nevertheless be distinguished by curvature. In Experiment~4, we show that even when pointwise tracking and local diagram comparison reveal little difference, one-cycle holonomy can detect a difference in global transport. Finally, in Experiment~5, we numerically verify the stability of curvature and holonomy under noise perturbations.

\subsection{Preparatory remark: pointwise tracking and PD drift}

Before presenting the experiments, we briefly explain the two diagram-based quantities used for comparison, namely pointwise tracking and PD drift. Both are defined only from persistence-diagram points, so they serve as natural baselines against which the proposed transport-geometric quantities can be compared.

At each sampled time \(t_j\), let
\[
p_1^{(j)},\; p_2^{(j)} \in \mathbb{R}^2
\]
denote the two persistence-diagram points with the largest lifetimes. Each point is written as
\[
p_a^{(j)}=(b_a^{(j)},d_a^{(j)}),
\]
which records the birth scale and death scale of one of the two most persistent features at time \(t_j\).

Pointwise tracking asks, in a very simple form, the following question: when we move from time \(t_j\) to \(t_{j+1}\), which of the two points at the new time should be regarded as the continuation of each previously tracked feature? To make this decision, we compare the two possible assignments between the two tracked points at the previous time,
\[
\widetilde p_1^{(j)},\; \widetilde p_2^{(j)},
\]
and the two candidate points at the next time,
\[
p_1^{(j+1)},\; p_2^{(j+1)}.
\]
Specifically, using the Euclidean distance in the persistence diagram, we define the matching cost
\[
C_{ab}^{(j)}=\|\widetilde p_a^{(j)}-p_b^{(j+1)}\|_2^2.
\]
We then choose the assignment with the smaller total cost. In other words, tracking is based on the idea that a feature should continue to the nearest available diagram point at the next time. When the two candidate points are well separated, this rule is usually almost unambiguous. However, when the candidate points come very close to each other, the identity assignment and the swapped assignment can have nearly the same cost, and the labeling becomes unstable.

By contrast, PD drift is a different quantity. It does not compare consecutive times within the same system, but instead compares two persistence diagrams at the same time, typically a baseline diagram and a noisy one. Let
\[
p_1^{(j)},p_2^{(j)}
\]
be the top two points of the baseline diagram at time \(t_j\), and let
\[
q_1^{(j)},q_2^{(j)}
\]
be the corresponding top two points of the perturbed diagram. Then the PD drift is defined as the minimum average matching distance
\[
\mathrm{PDdrift}(t_j)
=
\frac{1}{2}
\min_{\sigma\in S_2}
\sum_{a=1}^2
\|p_a^{(j)}-q_{\sigma(a)}\|_2.
\]
Thus, PD drift measures how much the two most persistent diagram points move under perturbation. A small value means that the noisy diagram remains close to the baseline one, whereas a larger value indicates a larger diagram-level deformation.

In summary, pointwise tracking compares persistence-diagram points across neighboring times in order to maintain feature labels, whereas PD drift compares persistence-diagram points between two datasets at the same time in order to measure diagram displacement. These two quantities provide intuitive diagram-based baselines, while the proposed curvature and holonomy are designed to capture changes in the transport structure of the homological feature space beyond such pointwise comparisons.

\subsection{Experiment 1: Vineyard-like Monodromy and Zero-Mode Holonomy}

In Experiment~1, we examine the relation between vineyard-type monodromy and the holonomy of the zero-mode bundle. This experiment is motivated by the viewpoint of Braiding Vineyards, where persistence diagrams along a periodic one-parameter family are stacked to form a vineyard, and identifying the two ends of the period gives a closed vineyard. In such a closed vineyard, following the family for one period may permute the persistence-diagram points nontrivially; this is interpreted as vineyard monodromy \cite{chambers2026braiding}. For the detailed geometric construction of braided vineyards, we refer the reader to \cite{chambers2026braiding}. Here, we use a simplified vineyard-like dataset whose tracked birth and death coordinates exhibit the same type of cyclic behavior.

Figure~\ref{fig:exp5-vineyard-tracking} shows the vineyard-like tracking used in this experiment. The upper panel shows the birth coordinates of the three tracked features, and the lower panel shows the corresponding death coordinates. The red dotted curve represents the elder vine, while the shaded regions indicate crossing windows. The three dominant non-elder vines vary smoothly over one period and approach one another near the crossing windows. The data are constructed so that, after one full period, the three dominant vines undergo a cyclic relabeling.

\begin{figure}[t]
    \centering
    \includegraphics[width=\linewidth]{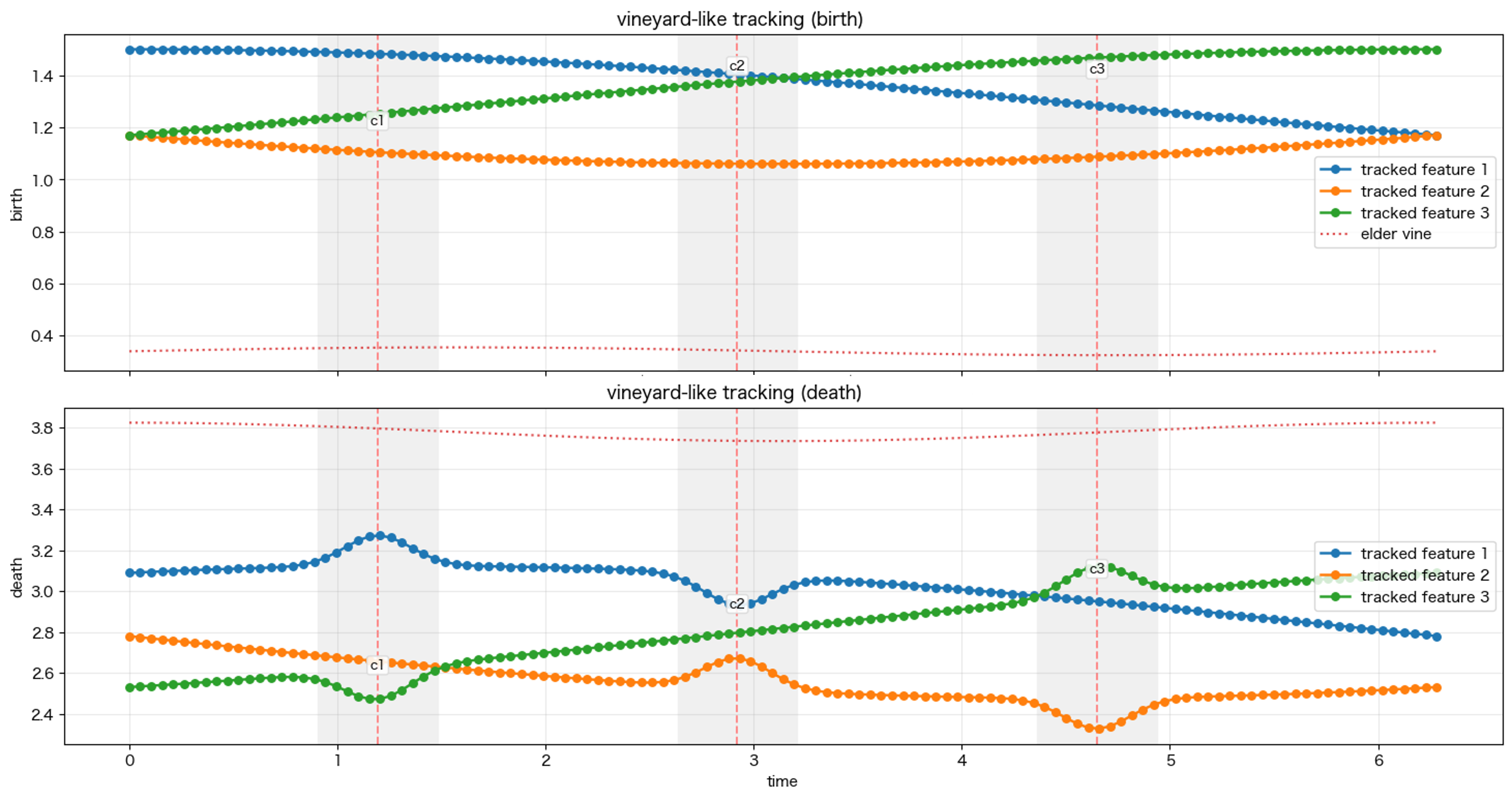}
    \caption{
    Vineyard-like tracking used in Experiment~1.
    The resulting vineyard-like tracking, including the elder vine and crossing windows.
    }
    \label{fig:exp5-vineyard-tracking}
\end{figure}

We then construct the ordinary Hodge Laplacian at each parameter value and transport its zero-mode space along the time direction over one full cycle. In order to compare the result with the vineyard-side monodromy, we focus on the non-elder part selected from the Hodge zero-mode space using the dominant tracked features. The one-cycle holonomy obtained from the zero-mode transport induces the nearest permutation
\[
    (1,2,0).
\]
This agrees with the cyclic endpoint permutation expected from the vineyard-like tracking. Thus, the global relabeling visible at the level of persistence-diagram points is reproduced as the holonomy of the Hodge zero-mode space.

Figure~\ref{fig:exp5-permutation-holonomy} compares the two descriptions directly. On the vineyard side, following the three non-elder vines over one period sends
\[
    v_1 \mapsto v_2,\qquad
    v_2 \mapsto v_3,\qquad
    v_3 \mapsto v_1.
\]
On the holonomy side, the corresponding zero-mode transport gives the same permutation matrix. This shows that the endpoint permutation of the braided vineyard and the one-cycle holonomy of the Hodge zero-mode bundle encode the same global transport information in this example.

\begin{figure}[t]
    \centering
    \includegraphics[width=\linewidth]{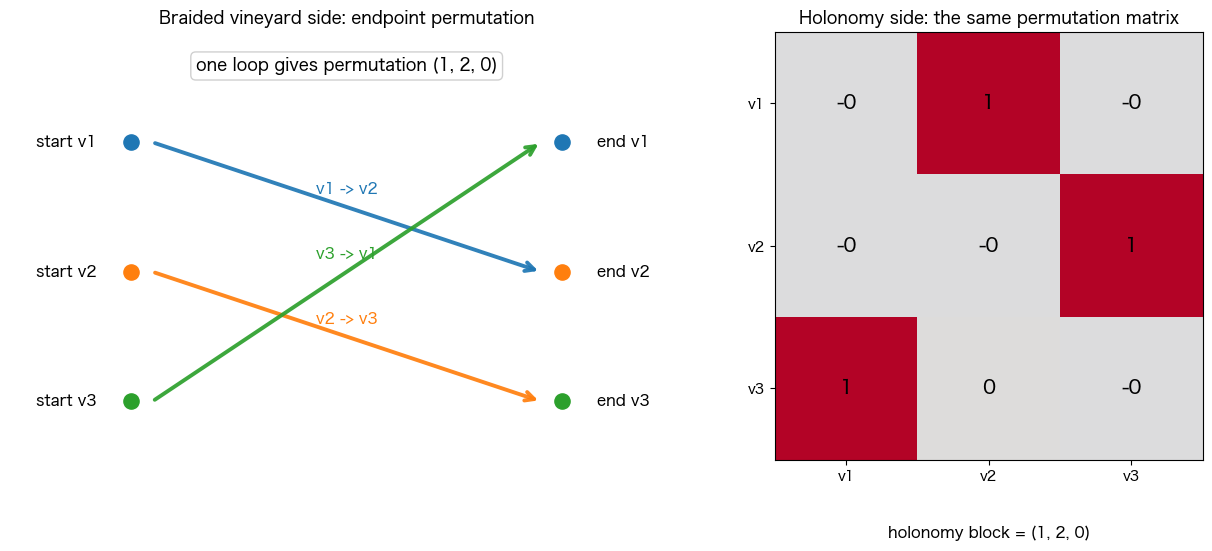}
    \caption{
    Endpoint permutation and holonomy in Experiment~1.
    One loop sends \((v_1,v_2,v_3)\) to \((v_2,v_3,v_1)\), and the zero-mode holonomy gives the same permutation matrix.
    }
    \label{fig:exp5-permutation-holonomy}
\end{figure}

Moreover, after removing the trivial direction corresponding to the sum of the three non-elder components, the induced action on the two-dimensional quotient space is a rotation by
\[
    -120^\circ .
\]
This is the standard two-dimensional geometric representation of an order-three cyclic monodromy. Equivalently, applying the holonomy three times returns the quotient zero-mode space to its original state.

This experiment clarifies the relation between vineyards and the gauge-geometric framework proposed in this paper. Vineyard-style tracking describes how individual persistence-diagram points move and how their labels are permuted after one period. By contrast, the present framework describes the transport of the Hodge zero-mode space itself. In the vineyard-like situation considered here, the cyclic permutation of diagram points is lifted to a nontrivial linear holonomy on the zero-mode space. Therefore, the proposed holonomy should not be regarded as a replacement for vineyard monodromy, but rather as its zero-mode-bundle counterpart.

\subsection{Experiment 2: Where Pointwise Tracking Becomes Unstable}

In Experiment~2, we use a time evolution of double circles in which two loops approach each other and then separate again\ref{exp1_data}, and track the top two persistence features in the persistence diagram by vineyard-style successive matching. In pointwise tracking, one assigns points between consecutive times by minimizing a matching cost. However, when two important features approach each other, the difference between the identity assignment cost and the swap assignment cost becomes extremely small, and it becomes impossible to determine stably which point should be regarded as feature~1 and which as feature~2. In such regions, pointwise tracking becomes unstable both numerically and conceptually (see Figs.~\ref{exp2_vine}, \ref{exp2_amb}).

\begin{figure}[tbp]
\begin{center}
\includegraphics[width=140mm]{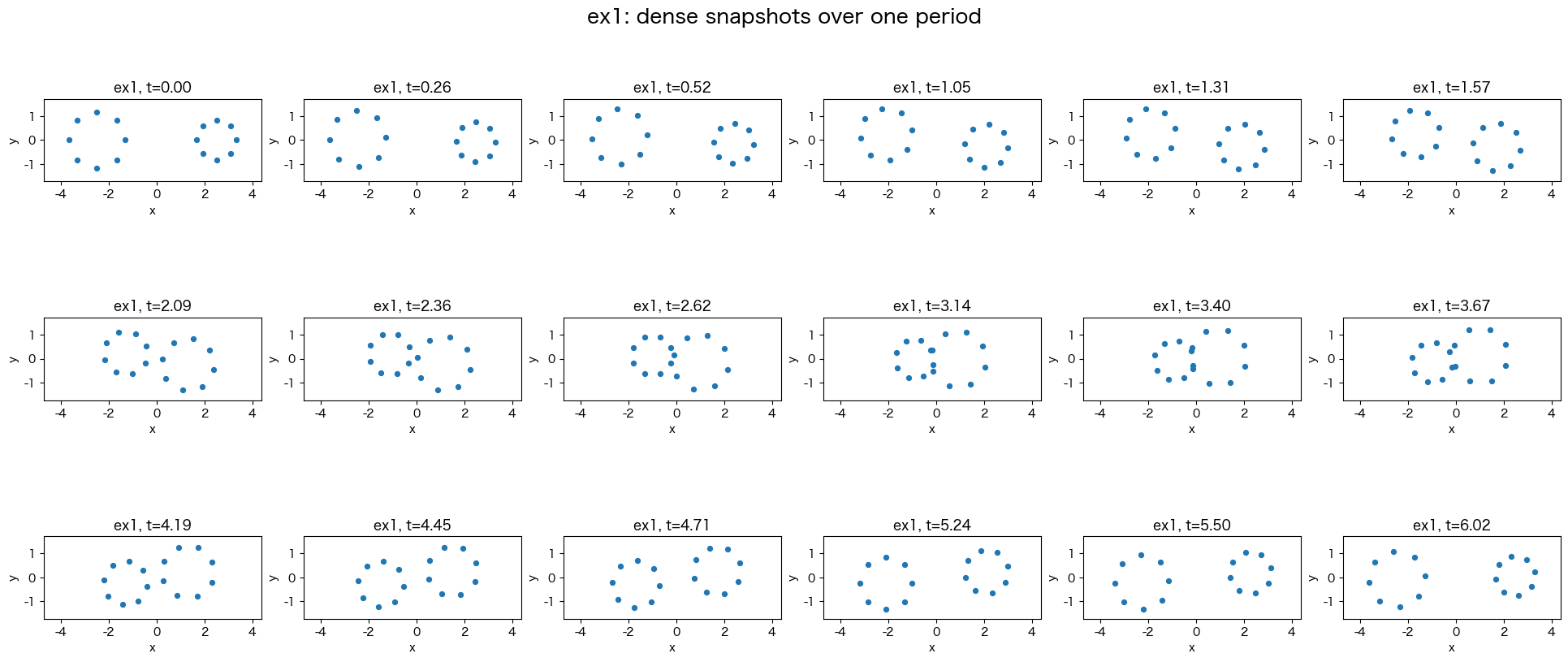}
\caption{Dense snapshots over one period for Experiment~3, approaching double circles.}
\label{exp1_data}
\end{center}
\end{figure}

\begin{figure}[tbp]
\begin{center}
\includegraphics[width=110mm]{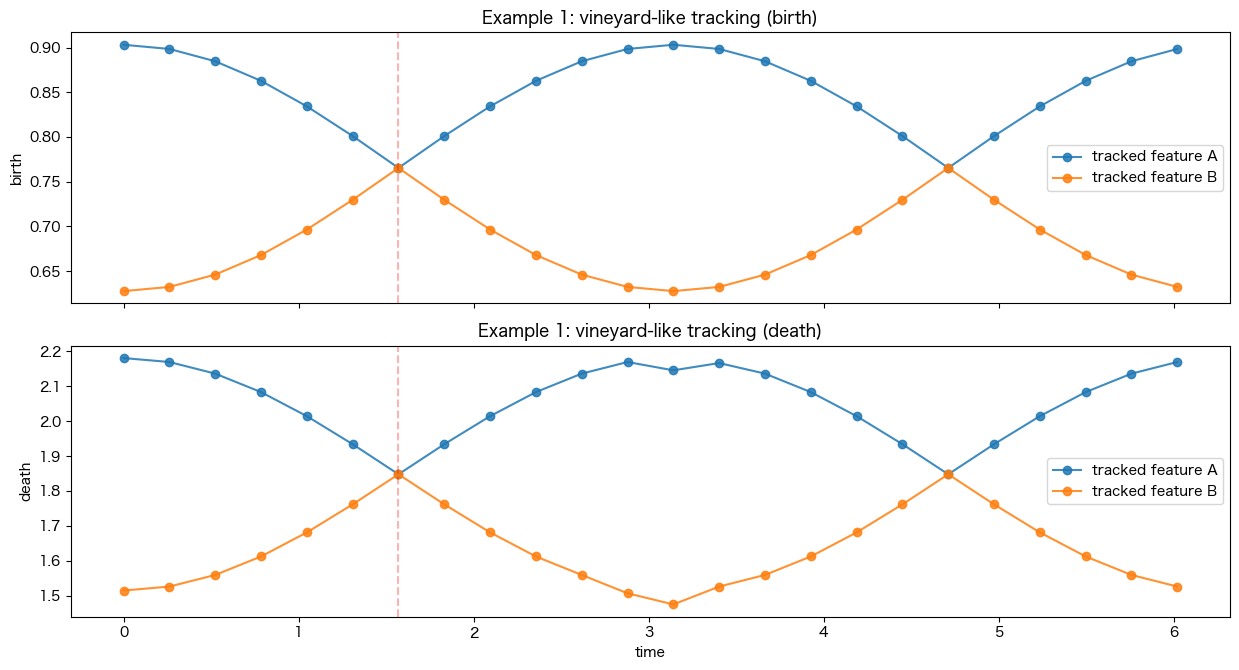}
\caption{Vineyard-style tracking of the top two persistence points in Experiment~2.}
\label{exp2_vine}
\end{center}
\end{figure}

\begin{figure}[tbp]
\begin{center}
\includegraphics[width=100mm]{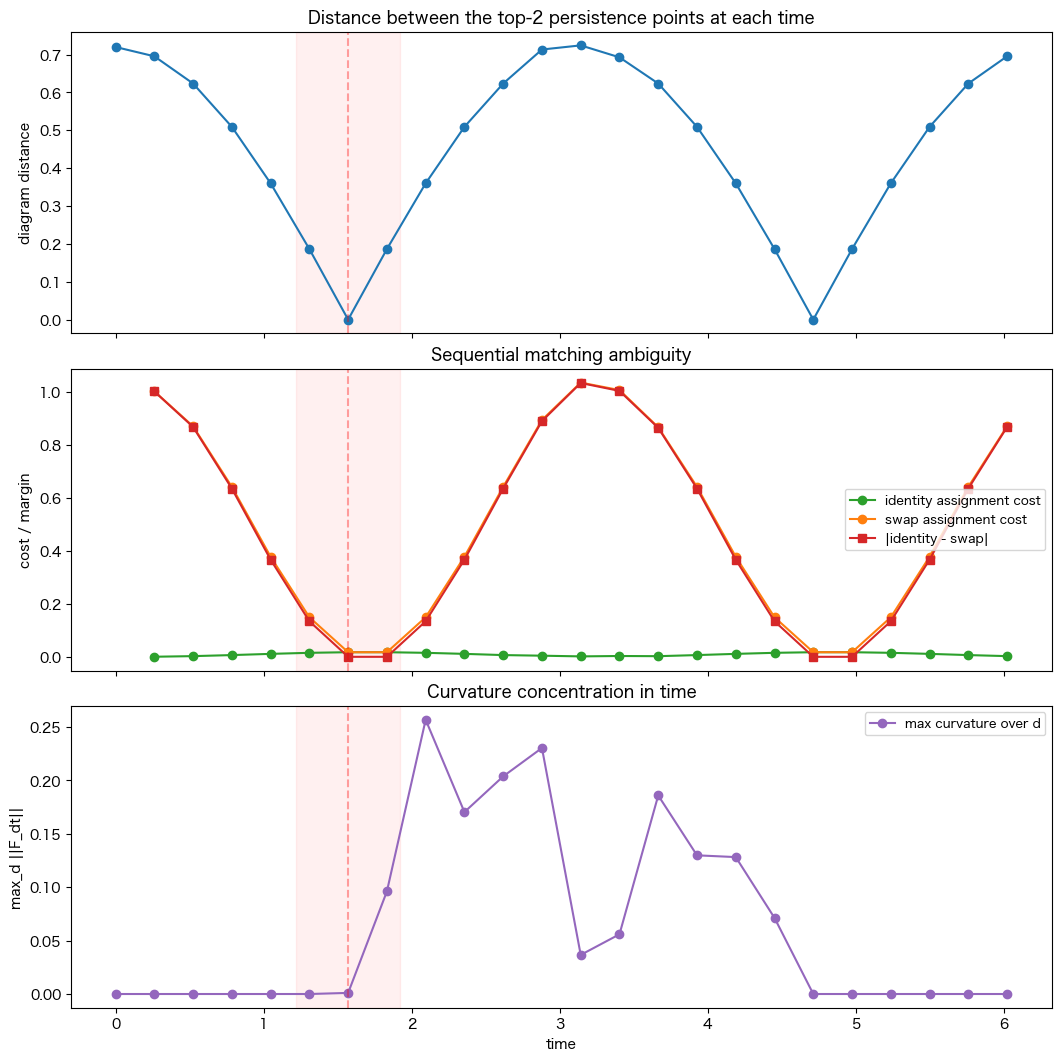}
\caption{Time series of three quantities in Experiment~2: the distance between the top two persistence points, the tracking ambiguity margin, and the maximum curvature. The distance and ambiguity margin become nearly zero at the swap time, while the curvature attains its peak in the same region.}
\label{exp2_amb}
\end{center}
\end{figure}

By contrast, the curvature used in the present work does not depend on any discrete correspondence between persistence-diagram points. Instead, it is computed directly from the zero-mode projection embedded in a common ambient space. Therefore, even in regions where vineyard-style pointwise tracking essentially loses its meaning, the curvature can still be evaluated. Indeed, even near the time where the top two persistence-diagram points nearly merge and pointwise tracking becomes ambiguous, the curvature exhibits a clear localized peak and continuously describes the strength of reorganization of the homological feature space. The crucial point is that curvature is not a quantity that judges whether a discrete labeling succeeds or fails; rather, it measures changes in the internal structure of the feature space itself.

More importantly, the curvature peak coincides with the region where the pointwise tracking swap occurs, or equivalently where the assignment ambiguity becomes extremely small (Fig.~\ref{exp2_amb}). This indicates that the instability of pointwise tracking is not merely a superficial consequence of the proximity of points in the persistence diagram, but is rooted in a strong reorganization of the homological feature space itself. In other words, the reason why it becomes unclear which point should correspond to which is that the internal zero-mode space is actually undergoing strong mixing, and the natural basis directions are changing rapidly.

In this sense, curvature should be understood not merely as an auxiliary quantity, but as a geometric explanation of why vineyard-style tracking becomes unreliable. As discussed in Section~3, curvature measures the noncommutativity of infinitesimal transport, that is, the strength of local reorganization. Experiment~2 shows that this theoretical interpretation has concrete explanatory power in an actual numerical example. Thus, this experiment provides a deeper geometric explanation of tracking instability beyond the usual description in terms of proximity of diagram points alone.

\subsection{Experiment 3: Similar Persistence Diagrams but Different Curvature}

In Experiment~3, we considered the time evolution of double circles in which two loops approach each other and then separate again (Fig.~\ref{exp1_data}). As a comparison, we prepared a size-only control in which only the overall scale changes over time (Fig.~\ref{exp1_data_}). The persistence diagrams of these two systems are very similar at each time, so at the diagram level they are almost indistinguishable (Figs.~\ref{exp1_paesis}, \ref{exp1_bottl}). However, from the viewpoint of the present work, what matters is not merely the similarity of the birth--death information at each time, but rather how the homological feature space is transported along the scale direction and the time direction.

\begin{figure}[tbp]
\begin{center}
\includegraphics[width=140mm]{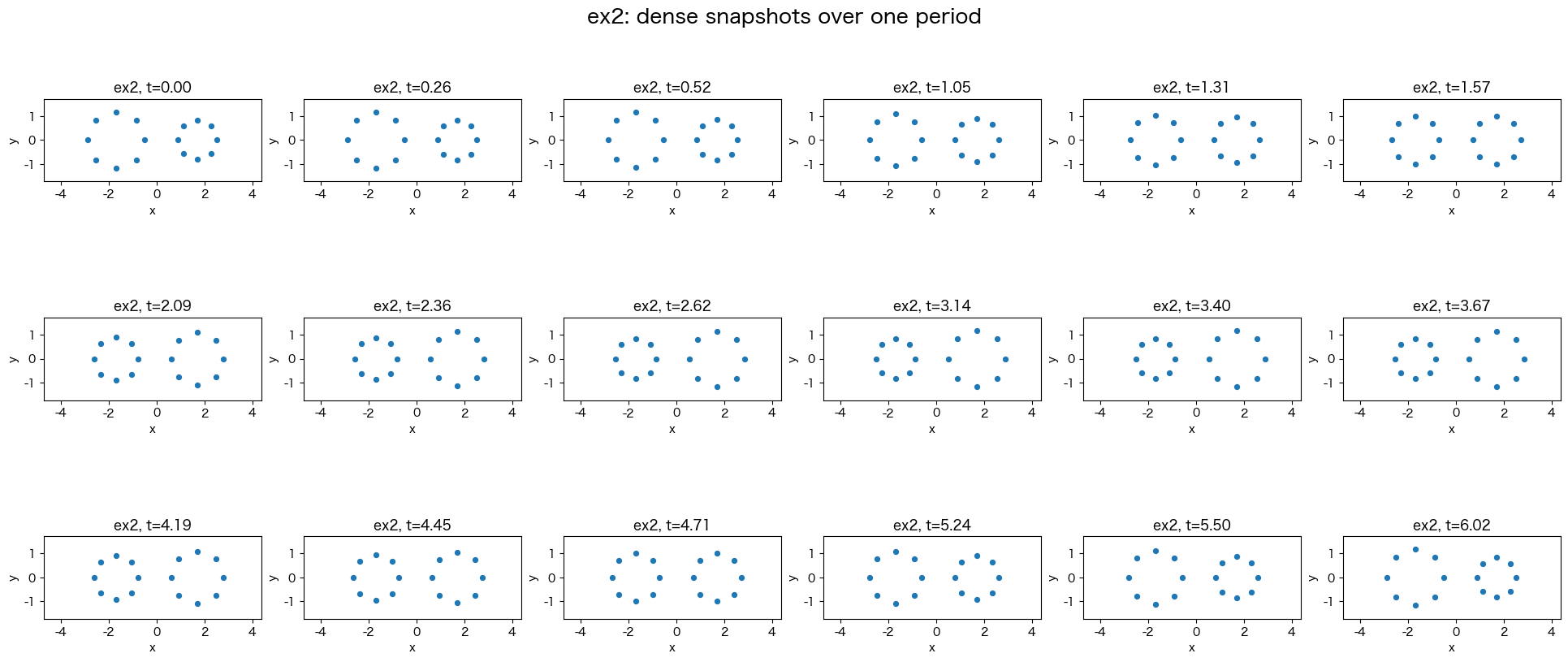}
\caption{Dense snapshots over one period for the size-only control in Experiment~3.}
\label{exp1_data_}
\end{center}
\end{figure}

As shown in Fig.~\ref{exp1_heat}, the curvature in the approaching double-circle system is clearly concentrated in a localized region, whereas in the size-only control it is almost completely zero. Thus, although the two systems look very similar in terms of persistence diagrams, they are fundamentally different from the viewpoint of transport geometry of the zero-mode space. In the control system, the near-vanishing curvature indicates that the transport is almost commutative and that there is essentially no internal reorganization of the homological feature space. By contrast, in the approaching double-circle system, even though the Betti number itself changes little, the internal structure of the zero-mode space undergoes substantial mixing and reorganization, which is visualized by the curvature map.

\begin{figure}[tbp]
\begin{center}
\includegraphics[width=140mm]{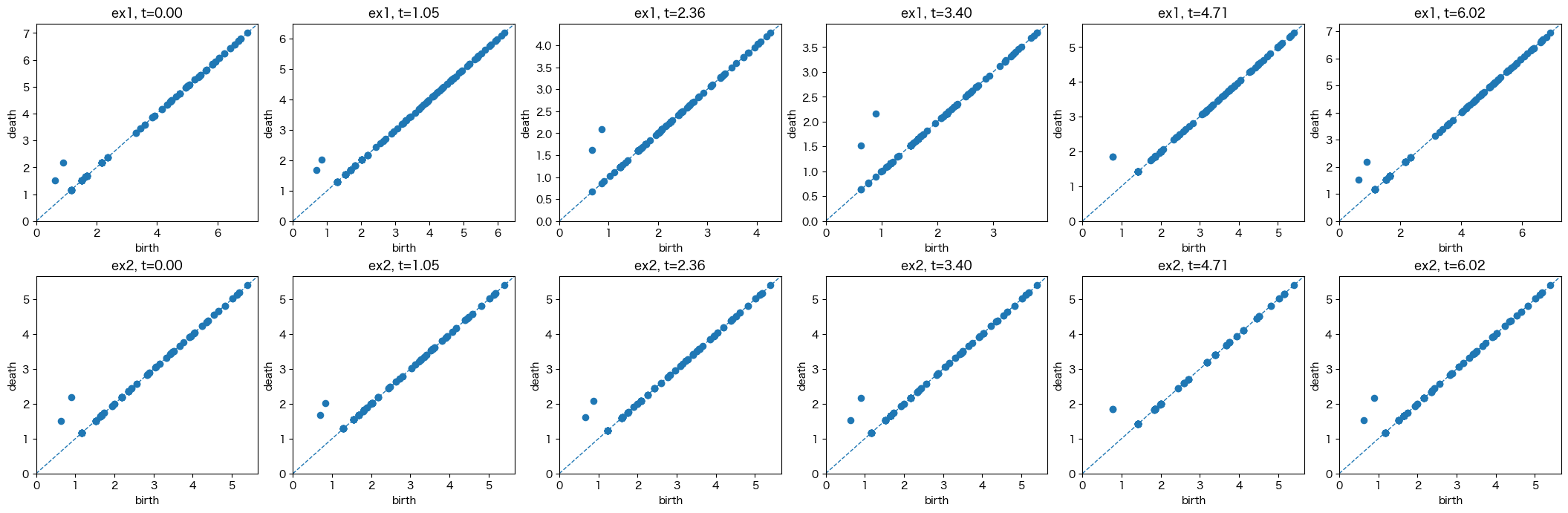}
\caption{Time evolution of the persistence diagrams for the two systems in Experiment~3.}
\label{exp1_paesis}
\end{center}
\end{figure}

\begin{figure}[tbp]
\begin{center}
\includegraphics[width=110mm]{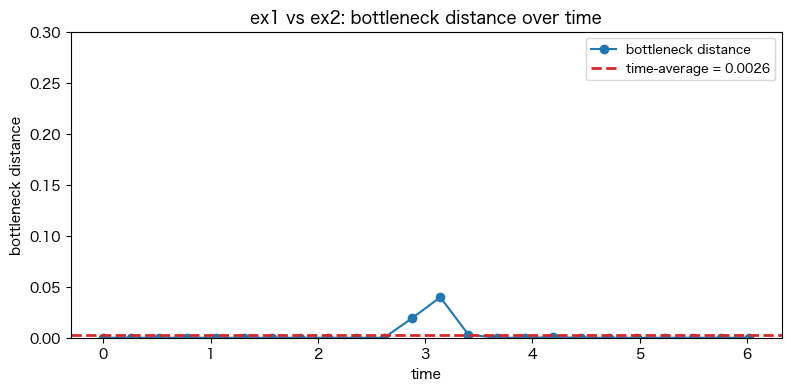}
\caption{Time evolution of the bottleneck distance between the two systems in Experiment~3.}
\label{exp1_bottl}
\end{center}
\end{figure}

\begin{figure}[tbp]
\begin{center}
\includegraphics[width=130mm]{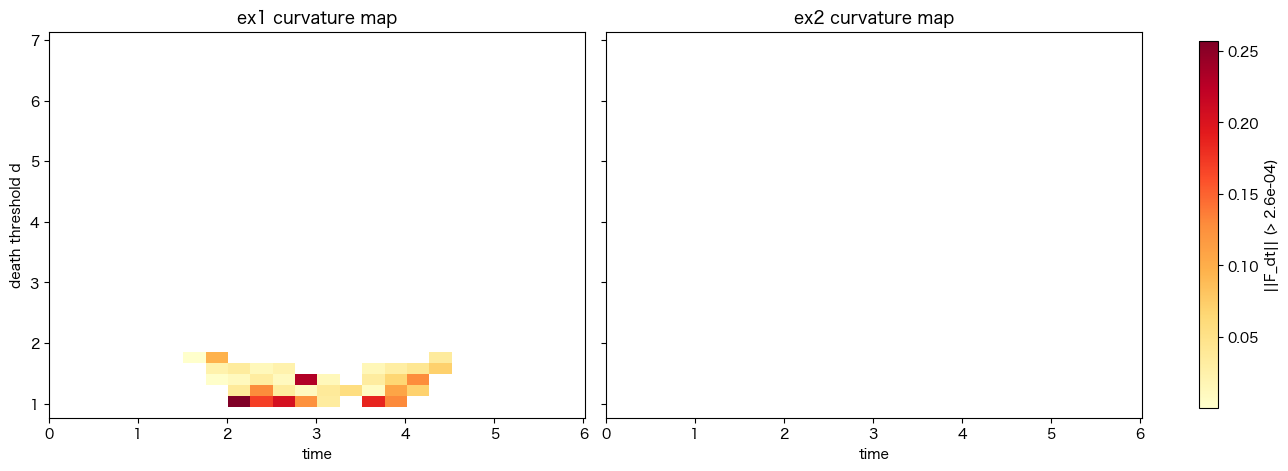}
\caption{Curvature heatmaps for the two systems in Experiment~3.}
\label{exp1_heat}
\end{center}
\end{figure}

This result demonstrates that closeness of persistence diagrams does not imply closeness of transport geometry. In other words, there are structural differences that are invisible at the level of birth--death data alone, and curvature plays an essential role in detecting them. The curvature introduced in Section~3,
\[
F = P[dP,dP]P,
\]
was defined as a measure of the noncommutativity of changes of the zero-mode projection, and Experiment~3 shows numerically that this quantity visualizes local reorganization of homological feature spaces. Therefore, this experiment provides an explicit example showing that the proposed framework does not replace persistence diagrams, but rather complements them by capturing transport-structural differences that are difficult to see from diagram data alone.

\subsection{Experiment 4: Holonomy Reveals Global Differences Invisible to Pointwise Tracking}

In Experiment~4, we used periodically deforming dumbbell-type point-cloud data (Fig.~\ref{exp3_data3}). More specifically, we considered a time series containing two outer loop structures together with an additional loop-like structure in the middle. As a comparison, we considered another dataset in which, as shown in Fig.~\ref{exp3_data4}, the middle loop appears to rotate over time. The point clouds evolve smoothly and continuously in time, and from the viewpoint of the time series of persistence diagrams the two systems are difficult to distinguish locally. The purpose of this experiment is to examine whether holonomy, as the accumulated transport over one full cycle, can detect a difference between two systems that look very similar at the local diagram level.

\begin{figure}[tbp]
\begin{center}
\includegraphics[width=140mm]{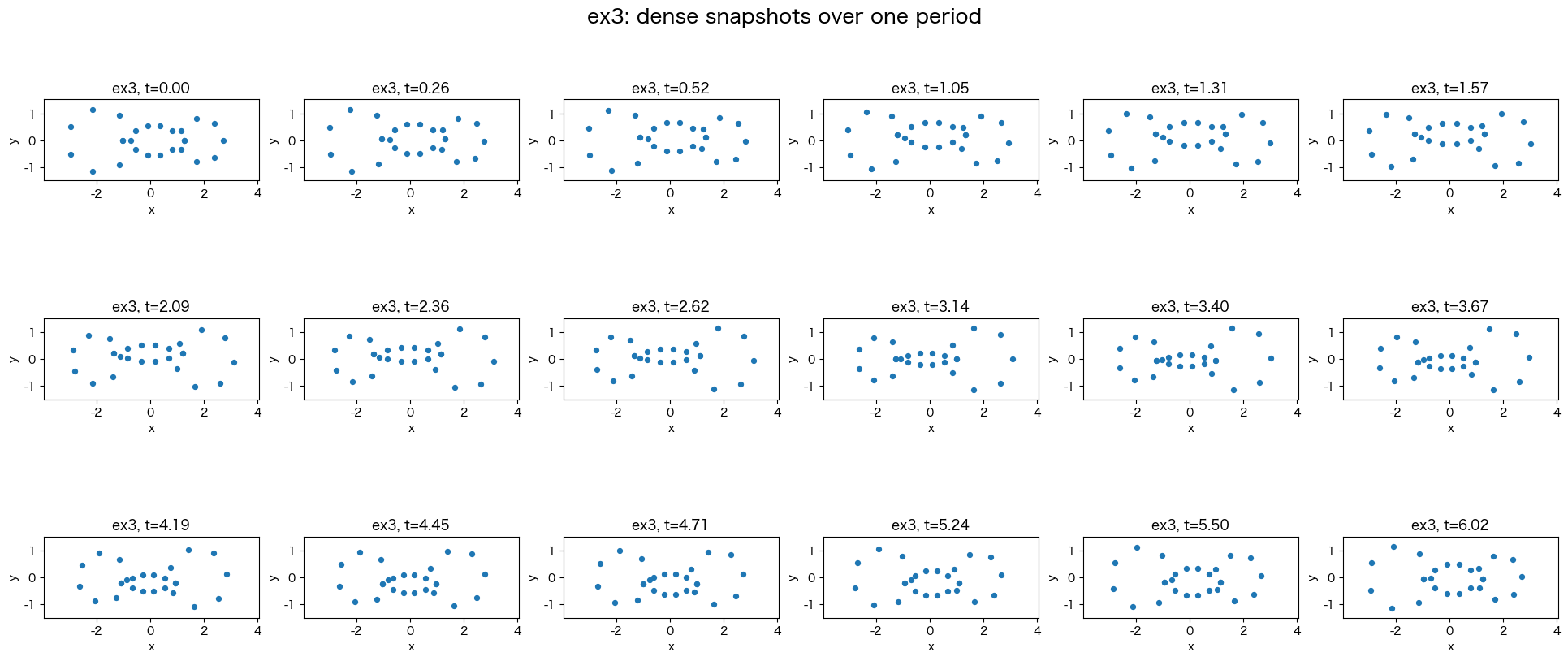}
\caption{Dense snapshots over one period for the first system in Experiment~4.}
\label{exp3_data3}
\end{center}
\end{figure}

\begin{figure}[tbp]
\begin{center}
\includegraphics[width=140mm]{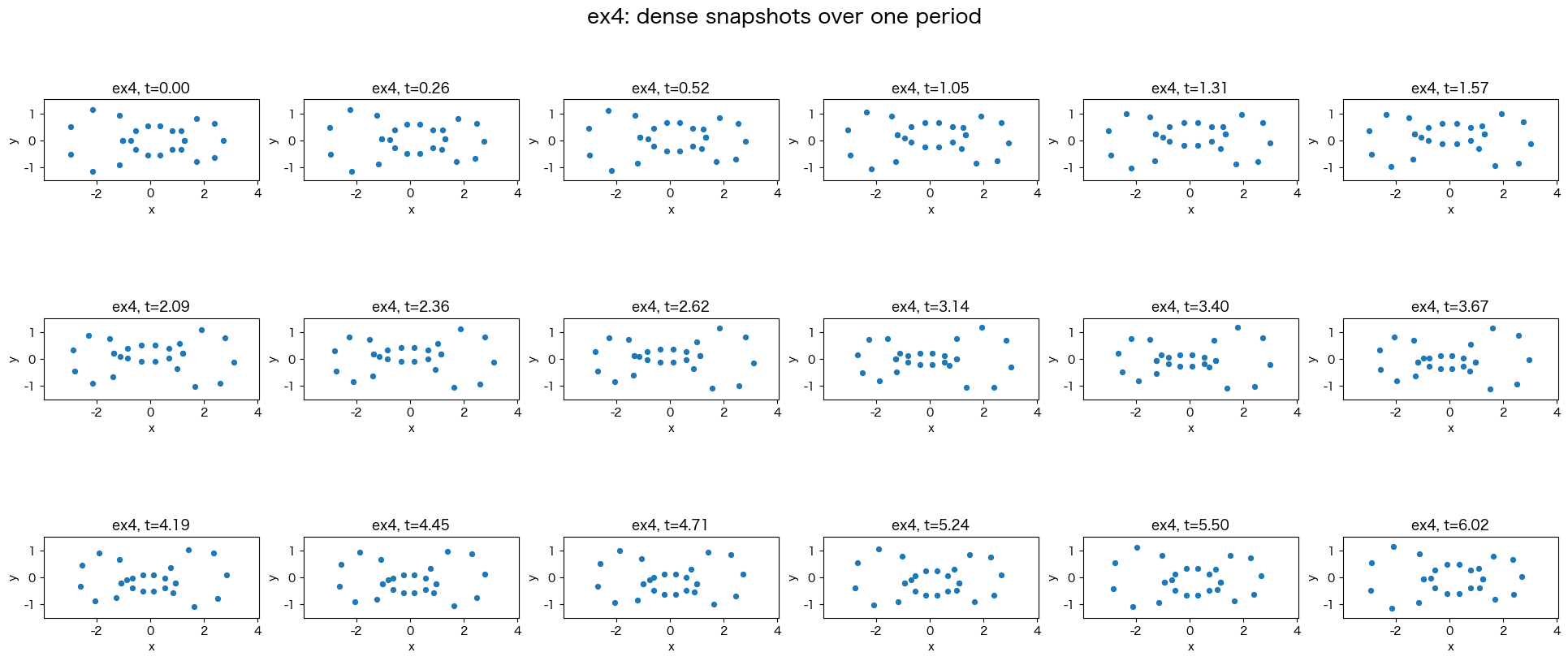}
\caption{Dense snapshots over one period for the second system in Experiment~4.}
\label{exp3_data4}
\end{center}
\end{figure}

We first applied the standard vineyard-style successive matching, that is, pointwise tracking based on local distances between persistence-diagram points. From this viewpoint, both systems have swap count zero, so no explicit local label exchange is observed (Fig.~\ref{exp3_vine}). Moreover, the mean diagram distance between the two systems is relatively small (Fig.~\ref{exp3_bottl}). Thus, based only on persistence-diagram-centered local tracking, the two systems do not appear to be substantially different.

\begin{figure}[tbp]
\begin{center}
\includegraphics[width=130mm]{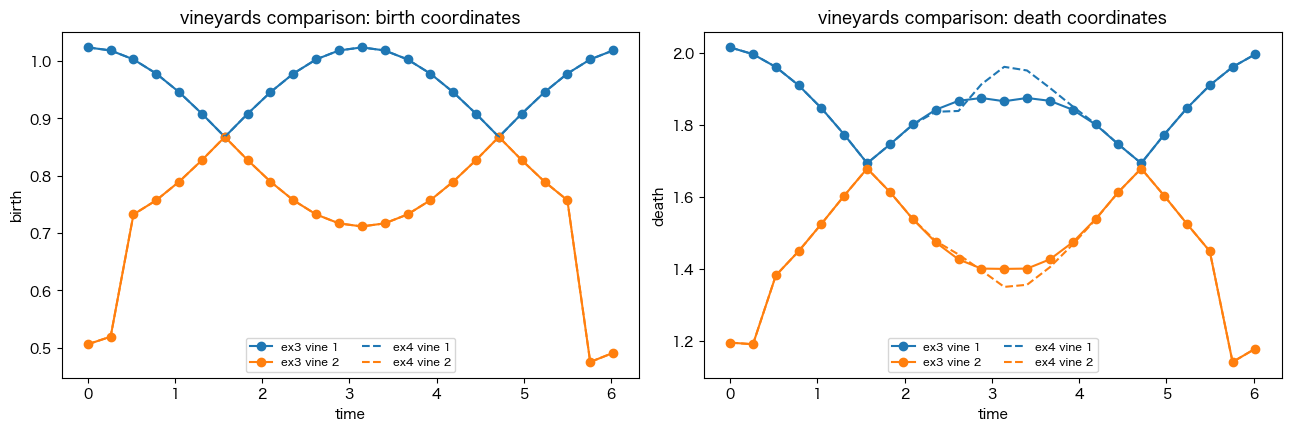}
\caption{Vineyard-style tracking of the top two persistence points in the two systems of Experiment~4.}
\label{exp3_vine}
\end{center}
\end{figure}

\begin{figure}[tbp]
\begin{center}
\includegraphics[width=130mm]{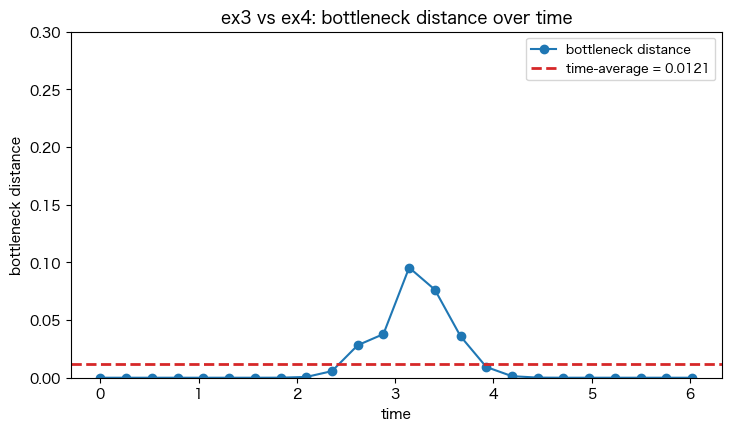}
\caption{Time evolution of the bottleneck distance between the two systems in Experiment~4.}
\label{exp3_bottl}
\end{center}
\end{figure}

By contrast, when we compare the one-cycle holonomy
\[
U_{\mathrm{cycle}}
=
Q_{N_t-1}Q_{N_t-2}\cdots Q_1Q_0,
\]
constructed by accumulating the discrete parallel transports of the Hodge zero-mode space over one full cycle, a clear difference emerges. For the first system, the one-cycle holonomy is
\[
U_{\mathrm{cycle}}(\mathrm{ex3})
=
\begin{pmatrix}
-0.6207 & -0.7840\\
-0.7840 & \phantom{-}0.6207
\end{pmatrix},
\]
whereas for the second system it is
\[
U_{\mathrm{cycle}}(\mathrm{ex4})
=
\begin{pmatrix}
\phantom{-}0.9795 & -0.2013\\
\phantom{-}0.2013 & \phantom{-}0.9795
\end{pmatrix}.
\]
These matrices are clearly different in form, showing that even when the point-cloud sequence and the persistence-diagram sequence look similar, the transformation undergone by the homological feature space after one full cycle can be fundamentally different.

To quantify the nontriviality of the holonomy, we also compare the Frobenius norm
\[
\|U_{\mathrm{cycle}}-I\|_F.
\]
We obtain
\[
\|U_{\mathrm{cycle}}(\mathrm{ex3})-I\|_F = 2.0000,
\qquad
\|U_{\mathrm{cycle}}(\mathrm{ex4})-I\|_F = 0.2862.
\]
Thus, the first system accumulates a strongly nontrivial transport over one cycle, whereas the second system remains much closer to the identity. Furthermore, the difference between the two holonomies is itself large:
\[
\|U_{\mathrm{cycle}}(\mathrm{ex3})-U_{\mathrm{cycle}}(\mathrm{ex4})\|_F = 2.0000.
\]
Therefore, at the level of one-cycle holonomy, the two systems are numerically well separated (see Figs.~\ref{exp3_hol}, \ref{exp3_vec}).

\begin{figure}[tbp]
\begin{center}
\includegraphics[width=120mm]{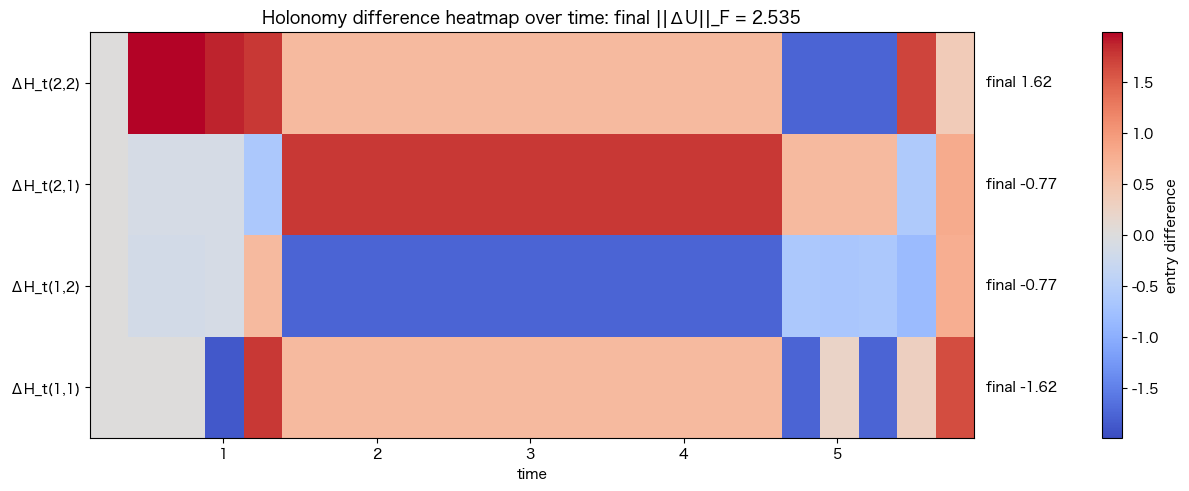}
\caption{One-cycle holonomy and tracking comparison for the two systems in Experiment~4.}
\label{exp3_hol}
\end{center}
\end{figure}

\begin{figure}[tbp]
\begin{center}
\includegraphics[width=\textwidth]{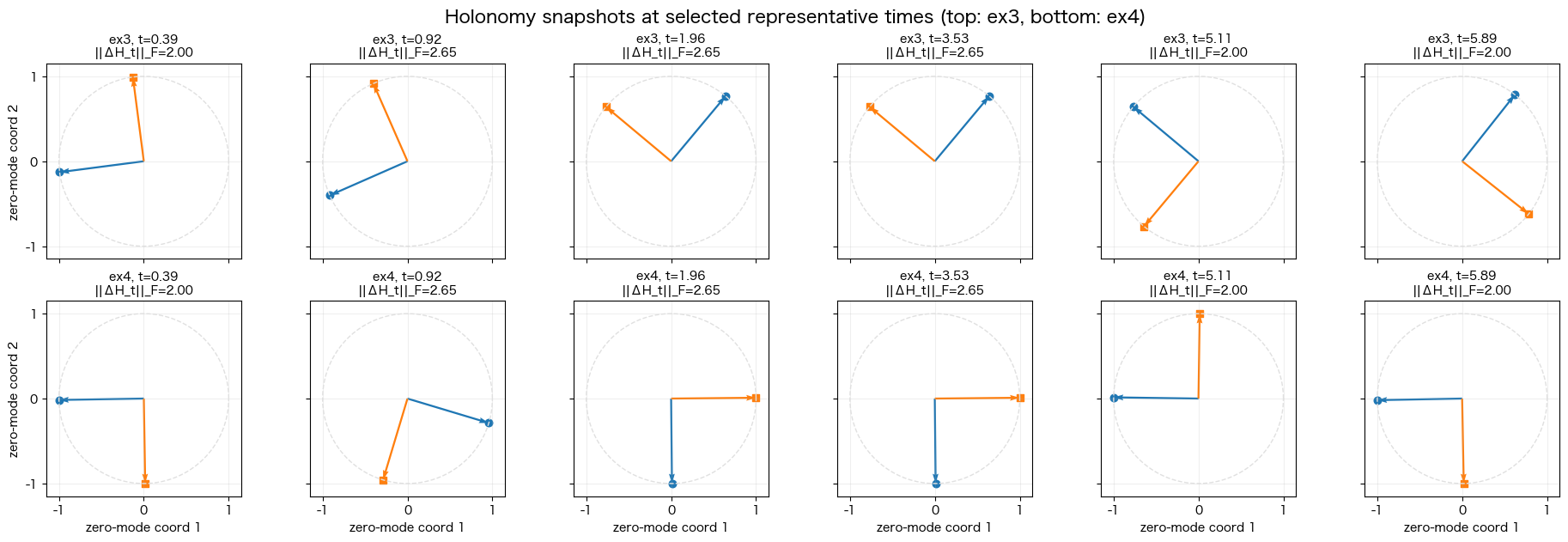}
\caption{Holonomy acting on zero-mode space for the two systems in Experiment~4.}
\label{exp3_vec}
\end{center}
\end{figure}

Accordingly, the essential message of Experiment~4 is that the local question asked by pointwise tracking---``which point continues to which point?''---is fundamentally different from the global question asked by holonomy---``what linear transformation does the homological feature space undergo after one full cycle?'' Even when the former shows almost no difference, the latter can reveal a substantial difference in global memory. This provides a direct numerical illustration of the theoretical meaning of holonomy introduced in Section~3, namely, as global memory accumulated along closed loops.

In Experiment~4, the holonomy is computed not from an arbitrary labeling of persistence-diagram points, but from the transport of a persistence-selected subspace inside the Hodge zero-mode space. More precisely, the persistence diagram is used only to select the dominant long-lived features at the chosen scale, and their representative cycles are projected onto the Hodge zero-mode space. After orthonormalization, these vectors define a rank-two selected frame, and the one-cycle holonomy is computed from the overlap matrices between these selected zero-mode frames. Thus, the holonomy in Experiment~4 should be interpreted as the Berry holonomy of a persistence-selected rank-two subbundle, rather than necessarily that of the full zero-mode bundle.

\subsection{Experiment 5: Stability of Curvature Through the Hodge Laplacian}

Finally, in Experiment~5, we examined how stable the curvature is under noise. In the theoretical part of this paper, we showed that on regular regions, as long as a sufficient spectral gap is maintained, the zero-mode projection and the curvature derived from it are locally stable under perturbations of the ordinary Hodge Laplacian. In Experiment~5, we tested how this theoretical expectation appears in actual discrete computations using multiple noisy realizations. Here we focus on the stability of curvature.

First, quantities based directly on persistence diagrams exhibit a very clean proportional relationship with the noise strength: as the noise increases, the diagram-level discrepancy also increases almost proportionally (Fig.~\ref{exp4}). This gives a natural baseline. In the present framework, however, we want to show that geometric quantities such as curvature are also stable, not directly at the diagram level, but through the more fundamental operator-level quantity given by the Hodge Laplacian.

\begin{figure}[tbp]
\begin{center}
\includegraphics[width=\textwidth]{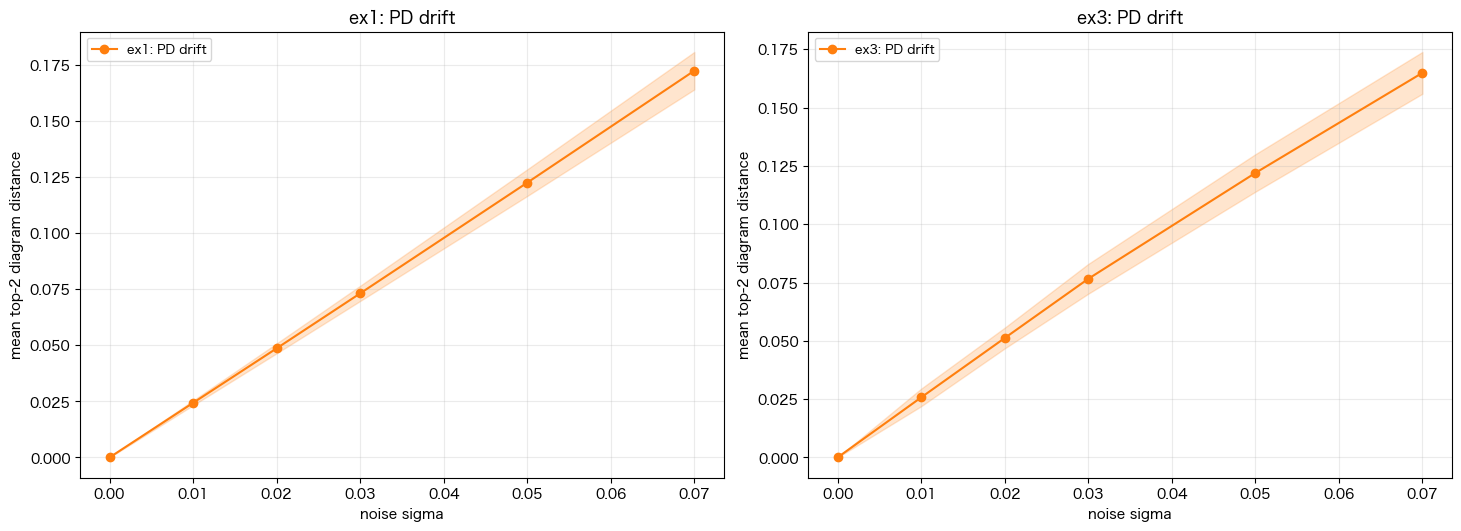}
\caption{Noise dependence of PD drift and swap count.}
\label{exp4}
\end{center}
\end{figure}

\begin{figure}[tbp]
\begin{center}
\includegraphics[width=\textwidth]{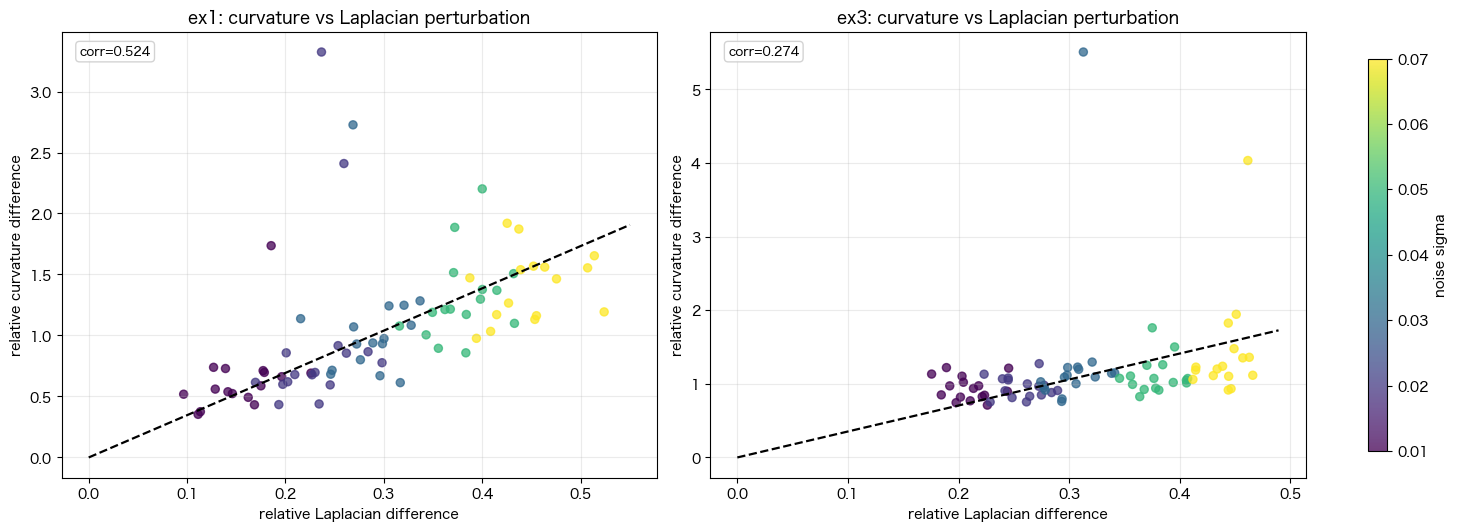}
\caption{Noise dependence of curvature versus Hodge Laplacian perturbation.}
\label{exp4_stab}
\end{center}
\end{figure}

To this end, we compared the relative change of the curvature with the relative perturbation size of the ordinary Hodge Laplacian. As shown in the figure, there is a clear positive correlation between them: samples with small Laplacian perturbation exhibit small curvature change, while larger Laplacian perturbations lead to larger curvature change in an approximately proportional manner (Fig.~\ref{exp4_stab}). This shows that curvature does not fluctuate chaotically under noise, but rather responds continuously and stably according to the size of the perturbation in the underlying Hodge Laplacian.

Of course, curvature is defined through
\[
F = P[dP,dP]P,
\]
that is, through derivatives of the projection and their commutator, so its behavior is more structured than that of a first-order quantity such as a direct distance between persistence diagrams. Nevertheless, the important point is that the response of curvature to noise is not scattered arbitrarily; when viewed through the Hodge Laplacian, it still exhibits a clean proportional trend. Thus, just as diagram-based quantities behave proportionally with respect to noise strength, curvature also behaves proportionally and stably once one looks at it through the operator-level perturbation.

This is fully consistent with the stability results established in Section~3 on regular regions. There, the change of curvature is controlled through the change of the zero-mode projection, and hence ultimately through the perturbation of the Hodge Laplacian, provided that the spectral gap remains open. Experiment~5 provides numerical evidence for precisely this theoretical picture. In other words, the stability of curvature is not grounded in pointwise correspondence between persistence-diagram points, but in the stability of the Hodge Laplacian as an operator. This is an important indication that the transport-geometric framework developed here provides not only a visualization tool, but also a theoretically consistent and numerically stable description.

\subsection{Summary}

Through these experiments, we confirmed that the proposed framework complements the standard persistence-diagram-centered description by visualizing transport structure that is difficult to see otherwise.

Experiment~1 illustrated the relationship between vineyard monodromy and zero-mode holonomy. Motivated by the construction of closed vineyards in Braiding Vineyards \cite{chambers2026braiding}, we considered vineyard-like data in which three dominant non-elder vines undergo cyclic monodromy over one period. The resulting one-cycle holonomy of the Hodge zero-mode bundle reproduced the same permutation \((1,2,0)\). After removing the trivial direction, the induced action on the two-dimensional quotient space was a rotation by \(-120^\circ\), which is the standard geometric representation of order-three monodromy. This shows that the global label permutation observed in vineyard-style tracking can be interpreted as holonomy of the Hodge zero-mode space.

Experiment~2 showed that curvature can still be computed even in regions where vineyard-style pointwise tracking loses its meaning, and that the instability of pointwise tracking is rooted in strong reorganization of the homological feature space itself. Experiment~3 showed that even when the persistence diagrams are almost identical, the curvature can be clearly nonzero in one case and almost completely zero in the other, indicating a fundamental difference in transport geometry. Experiment~4 showed that even when two systems are difficult to distinguish by local diagram-based tracking, their one-cycle holonomies can differ substantially, and that this difference can be quantified numerically both at the matrix level and by scalar measures such as Frobenius norms. Experiment~5 showed that the change in curvature exhibits a proportional relationship with the perturbation size of the ordinary Hodge Laplacian, demonstrating stable behavior under noise.

These results support the central claim of this paper: parameter-dependent topological structure should be understood not only through the motion of persistence-diagram points, but also through the transport geometry of homological feature spaces. Curvature visualizes local reorganization, while holonomy describes global memory accumulated along closed loops. The vineyard-like example shows that this holonomy can reproduce monodromy phenomena familiar from vineyard theory, thereby positioning the present framework not as a replacement for vineyards, but as a gauge-geometric counterpart of their global transport information on the zero-mode space. Moreover, the stability of these quantities is consistent with the regular-region theory developed in Section~3. Therefore, the gauge-geometric framework proposed here provides an effective way to describe changes in topological structure that are difficult to capture by pointwise matching alone.

\section{Discussion and Future Directions}

In this paper, we proposed a framework for describing time-evolving topological data not by tracking feature points in persistence diagrams, but by regarding the family of zero-mode spaces of the ordinary Hodge Laplacian as the primary object. The central idea is to interpret the zero eigenspace
\[
E_0(d,t)=\ker \Delta^{\mathrm{Hodge}}_{q}(d,t)
\]
of the ordinary combinatorial Hodge Laplacian at each parameter point \((d,t)\) as the homological feature space, and to treat the collection of these spaces as a vector bundle over parameter space. From this viewpoint, one can introduce geometric structures such as a Berry-type connection, curvature, and holonomy into the analysis of parameter-dependent homology.

The first significance of this work is that it reinterprets homological information not as a static collection of features, but as a family of linear spaces transported over parameter space. In classical one-parameter persistent homology, the main information is described by barcodes or persistence diagrams. By contrast, in the present framework, the object of study is the variation of the zero-mode space itself, namely the orientation, mixing, and transport structure of the homological feature space. This makes it possible to study not only the existence of topological features, but also dynamical information such as reorganization and global memory.

The second significance is that curvature and holonomy serve as basis-independent quantities describing the reorganization of homological feature spaces. When features in a persistence diagram are sufficiently separated, vineyard-type tracking and monodromy can be computed stably. Our numerical experiments confirmed that, in such regular situations, zero-mode holonomy reproduces vineyard monodromy. On the other hand, even in regions where persistence points approach one another and vineyard matching becomes unstable, the curvature defined from the zero-mode projection remains stably computable and detects local mixing and reorganization of the feature space. We further showed that there are examples in which persistence diagrams or pointwise tracking look nearly identical, while curvature or holonomy distinguishes the underlying dynamics. Thus, these quantities are not replacements for vineyards, but geometric quantities that complement diagram-level tracking by describing the basis-independent reorganization of the homological feature space behind it.

The third significance is that the robustness of the proposed geometric quantities against noise is confirmed both theoretically and numerically. In this paper, we showed that on regular regions away from the singular set, the zero-mode projection and the curvature expressed by the projection formula are locally stable under perturbations of the Hodge Laplacian as an operator family. Since local connection forms and holonomy matrices depend on gauge choices, the stability theorem was stated for the projection and curvature, which can be compared intrinsically on the common Hilbert space. Numerical experiments further confirmed that the change in curvature under noise perturbations correlates with the perturbation size of the Hodge Laplacian and responds stably rather than fluctuating chaotically. This shows that the proposed geometric quantities can function, at least on regular regions, as stable descriptors in actual data analysis.

Near the singular set, however, the uniform spectral gap may close, and the stability estimates may break down. This should not be regarded as a defect of the theory. Rather, it reflects essential phenomena such as changes in Betti number, topological transitions, or strong local reorganization of the homological feature space. Indeed, in our numerical experiments, curvature becomes locally large in regions where pointwise tracking becomes unstable, indicating that this instability corresponds not merely to the proximity of diagram points, but to the reorganization of the zero-mode space itself.

We also interpreted curvature not only as a measure of the strength of reorganization, but also, by analogy with Berry curvature, as an effective geometric \(2\)-form describing the noncommutativity of transport over parameter space. The resulting ``force'' is not a physical Newtonian force, but it can be regarded as an effective geometric bias that bends transport in parameter space. This viewpoint is useful because it allows one to read curvature heatmaps not merely as scalar intensity maps, but as indicators of where the flow of homological structure is strongly deflected.

Taken together, the present framework provides a dynamical and gauge-geometric description of parameter-dependent homology. At the same time, several limitations and future directions remain.

First, the proposed curvature and holonomy tend to be more computationally demanding than ordinary persistence diagrams or Betti numbers. For large point clouds or high-dimensional simplicial complexes, constructing the Hodge Laplacian, computing the zero-mode projection, and evaluating transport and holonomy can become numerically expensive. On the other hand, this difficulty may be alleviated in the future by quantum computation. Quantum algorithmic implementations of Hodge Laplacians have been studied in the context of quantum topological data analysis\cite{lloyd2016quantum,akhalwaya2024comparing,gyurik2022towards,yamauchi2025quantum}, and quantum computational methods for Berry phases and Berry curvature have also been proposed\cite{hayakawa2025computational,mootz2026efficient}. Therefore, the zero-mode bundle, curvature, and holonomy introduced in this work may develop into topological-geometric descriptors that are executable on quantum computational platforms.

Second, this paper takes the zero-mode as the primary object, while low-energy subspaces are introduced only as auxiliary extensions. The zero-mode is topologically natural because it corresponds directly to homology, but it may be too fragile for numerical implementation or near the singular set. In such cases, it may be practically advantageous to use the low-energy subspace
\[
E_\eta(d,t)
=
\operatorname{Im}\mathbf 1_{[0,\eta]}
\bigl(\Delta^{\mathrm{Hodge}}_{k}(d,t)\bigr)
\]
instead, where \(\eta>0\) is a low-energy cutoff. Understanding the relation between the zero-mode theory and its low-energy extension more systematically, especially from the viewpoints of numerical analysis and stability, is an important direction for future work.

Third, this paper is mainly theoretical and illustrative, and does not yet include full-scale applications to real data. Nevertheless, the curvature heatmap and holonomy may provide useful information for engineering and industrial datasets. For example, a curvature heatmap may indicate time ranges and scale ranges where local reorganization is intense, while holonomy may quantify nontrivial memory remaining in the homological feature space after one period. Possible applications include monitoring of periodic operations, early anomaly detection, and visualization of state transitions. Future work should therefore include implementation on real data, stable discretization schemes, and empirical comparison with existing methods.

Finally, the conceptual message of this work can be summarized as follows. Topological data analysis has often described evolving data through static summaries such as birth and death times, persistence diagrams, and Betti numbers. In contrast, this paper proposes to regard homology as a family of zero-mode spaces transported over parameter space and to introduce connection, curvature, and holonomy on this family. As a result, one gains access not only to the existence of topological features, but also to dynamical information such as reorganization, transport, memory, and magnetic-field-like geometric deflection. In particular, the proposed method reproduces vineyard monodromy in regular cases, while still providing basis-independent reorganization quantities when vineyard matching becomes unstable. In this sense, this work introduces a gauge-geometric viewpoint into topological data analysis and provides a new descriptive framework for the topological analysis of time-evolving data.

\section*{Statements and Declarations}

\subsection*{Funding}
The authors declare that no funds, grants, or other support were received during the preparation of this manuscript.

\subsection*{Competing interests}
The authors have no relevant financial or non-financial interests to disclose.

\subsection*{Author contributions}
S.K. conceived the study, developed the theoretical framework, and carried out the numerical computations. Y.S. made significant contributions through extensive discussions with S.K. and provided important guidance for refining the direction of the study. All authors reviewed and approved the final manuscript.

\subsection*{Data availability}
The datasets generated and analyzed during the current study are available from the corresponding author upon reasonable request.

\subsection*{Code availability}
The code used for the numerical experiments is available from the corresponding author upon reasonable request.

\subsection*{Ethics approval}
Not applicable.

\subsection*{Consent to participate}
Not applicable.

\subsection*{Consent for publication}
Not applicable.

\begin{appendices}

\section{Minimal Geometric Preliminaries}

We now review the minimal geometric notions needed later: vector bundles, connections, curvature, holonomy, gauge transformations, and the Berry connection induced from a family of subspaces.

\subsection{Vector bundles}

\begin{definition}[Vector bundle]
Let $M$ be a smooth manifold. A smooth vector bundle of rank $r$ over $M$ consists of a smooth manifold $E$ and a smooth surjective map
\[
\pi:E\to M
\]
such that, for every $x\in M$, the fiber
\[
E_x:=\pi^{-1}(x)
\]
is an $r$-dimensional vector space, and for every $x\in M$ there exist an open neighborhood $U\subset M$ and a diffeomorphism
\[
\Phi:\pi^{-1}(U)\to U\times \Bbbk^r
\]
satisfying
\[
\mathrm{pr}_1\circ \Phi=\pi,
\]
with the property that for each $y\in U$, the restriction
\[
\Phi|_{E_y}:E_y\to \{y\}\times \Bbbk^r\cong \Bbbk^r
\]
is a linear isomorphism.
\end{definition}

\begin{definition}[Local frame]
Let $E\to M$ be a rank-$r$ vector bundle. A collection of smooth sections
\[
e_1,\dots,e_r\in \Gamma(U,E)
\]
over an open set $U\subset M$ is called a local frame if, for every $x\in U$, the vectors
\[
e_1(x),\dots,e_r(x)
\]
form a basis of the fiber $E_x$.
\end{definition}

If a local frame is fixed, any section $s\in \Gamma(U,E)$ can be uniquely written as
\[
s=\sum_{a=1}^r s^a e_a.
\]

\subsection{Connections}

\begin{definition}[Connection]
Let $E\to M$ be a smooth vector bundle. A connection on $E$ is a $\Bbbk$-linear map
\[
\nabla:\Gamma(E)\to \Omega^1(M,E)
\]
satisfying the Leibniz rule
\[
\nabla(fs)=df\otimes s+f\,\nabla s
\]
for all $f\in C^\infty(M)$ and all $s\in \Gamma(E)$.
\end{definition}

If $e_1,\dots,e_r$ is a local frame on $U\subset M$, then
\[
\nabla e_a=\sum_{b=1}^r A_{ba}\, e_b
\]
for uniquely determined $1$-forms $A_{ba}\in \Omega^1(U)$. The matrix-valued $1$-form
\[
A=(A_{ba})
\]
is called the connection $1$-form in the chosen frame.

For a section
\[
s=\sum_a s^a e_a,
\]
one has
\[
\nabla s=\sum_a ds^a\otimes e_a+\sum_{a,b}s^a A_{ba}\,e_b,
\]
or, in matrix notation,
\[
\nabla s=d s + A s.
\]

\begin{definition}[Parallel transport]
Let $\gamma:[0,1]\to M$ be a smooth curve. A section $s(t)\in E_{\gamma(t)}$ along $\gamma$ is called parallel if
\[
\nabla_{\dot\gamma(t)}s(t)=0.
\]
Given an initial vector $s(0)\in E_{\gamma(0)}$, this equation determines a vector $s(1)\in E_{\gamma(1)}$. The resulting linear map
\[
\Pi_\gamma:E_{\gamma(0)}\to E_{\gamma(1)}
\]
is called the parallel transport along $\gamma$ associated with $\nabla$.
\end{definition}

\subsection{Curvature}

\begin{definition}[Curvature]
The curvature of a connection $\nabla$ is the $\operatorname{End}(E)$-valued $2$-form
\[
F_\nabla:=\nabla^2.
\]
If $A$ is the connection $1$-form in a local frame, then
\[
F=dA+A\wedge A.
\]
\end{definition}

In components,
\[
F_{ab}=dA_{ab}+\sum_c A_{ac}\wedge A_{cb}.
\]
The curvature measures the local nontriviality of the connection. In particular, $F=0$ means that the connection is locally flat.

\subsection{Holonomy}

\begin{definition}[Holonomy]
Let $\gamma:[0,1]\to M$ be a closed curve with $\gamma(0)=\gamma(1)=x$. The parallel transport along $\gamma$ defines a linear map
\[
U_\gamma:E_x\to E_x.
\]
This map is called the holonomy along $\gamma$.
\end{definition}

In a local frame, the holonomy is written as
\[
U_\gamma=\mathcal{P}\exp\Bigl(-\int_\gamma A\Bigr),
\]
where $\mathcal{P}$ denotes path-ordering.

For a sufficiently small loop $\gamma=\partial S$, one has the approximation
\[
U_\gamma \approx I-\int_S F.
\]
Thus the curvature can be viewed as the infinitesimal generator of holonomy.

\subsection{Gauge transformations}

Since the choice of local frame is not unique, the local matrix representation of a connection depends on the frame.

\begin{definition}[Gauge transformation]
Let $(e_a)$ and $(e'_a)$ be two local frames on an open set $U\subset M$, related by
\[
e'_a=\sum_b e_b\, g_{ba},
\]
where
\[
g:U\to GL(r,\Bbbk)
\]
is a smooth map. Then $g$ is called a gauge transformation.
\end{definition}

If $A$ and $A'$ are the corresponding connection $1$-forms, then
\[
A'=g^{-1}Ag+g^{-1}dg.
\]
Moreover, the curvature transforms as
\[
F'=g^{-1}Fg.
\]

Hence the connection $1$-form itself is gauge-dependent, whereas quantities such as the eigenvalues of $F$, the conjugacy class of $U_\gamma$, $\operatorname{tr}(U_\gamma)$, and $\det(U_\gamma)$ are gauge-invariant. These are the quantities with intrinsic geometric meaning.

\subsection{Berry connection}

The connection used in this paper is not chosen arbitrarily on a vector bundle. Rather, it is induced naturally from a family of subspaces inside a common inner-product space. This is the same structure as the Berry connection in quantum mechanics.

Let $H$ be a finite-dimensional inner-product space, and suppose that for each $x\in M$ one is given an $r$-dimensional subspace
\[
E_x\subset H
\]
depending smoothly on $x$. Then
\[
E=\bigsqcup_{x\in M} E_x
\]
forms a vector bundle over $M$.

Let
\[
P(x):H\to H
\]
denote the orthogonal projection onto $E_x$.

\begin{definition}[Connection induced from a projection]
For a local section $s$ of $E$, define
\[
\nabla s:=P(ds).
\]
This defines a natural connection on $E$ induced from the family of subspaces $E_x\subset H$.
\end{definition}

If one chooses a local orthonormal frame
\[
\psi_1(x),\dots,\psi_r(x)
\]
and writes
\[
\Psi(x)=(\psi_1(x),\dots,\psi_r(x)),
\]
then the corresponding connection $1$-form is
\[
A=\Psi^\ast d\Psi.
\]
This is the Berry connection.

Its curvature is
\[
F=dA+A\wedge A,
\]
and it can also be written in terms of the projection $P$ as
\[
F=P[dP,dP]P.
\]
This formula will be essential later, when we construct the connection on the zero-mode bundle of the Hodge Laplacian.

\section{Proof of Local Stability on Regular Regions}

In this appendix, we prove the quantitative stability theorem on regular regions. The proof is based on resolvent estimates for self-adjoint operator families on a finite-dimensional Hilbert space.

We only treat quantities that can be stated independently of gauge choices. Namely, we prove the stability of the zero-mode projection
\[
P_0
\]
and of the curvature expressed by the projection formula
\[
F=P_0[dP_0,dP_0]P_0.
\]
Since the local connection form \(A\) and the holonomy matrix depend on the choice of frame, we do not directly compare them in this appendix.

\subsection{Setup and assumptions}

Let \(U\subset A\times\Lambda\) be a regular region, and let
\[
L(x),\qquad \widetilde L(x)
\qquad (x\in U)
\]
be two self-adjoint operator families on the common Hilbert space \(H_q\), where \(x=(d,\lambda)\). We regard them as common-Hilbert-space realizations of ordinary Hodge Laplacians, or as perturbations of such realizations.

Let
\[
P_0(x),\qquad \widetilde P_0(x)
\]
be the corresponding zero-mode projections. That is,
\[
P_0(x):H_q\to \ker L(x),
\qquad
\widetilde P_0(x):H_q\to \ker \widetilde L(x)
\]
are orthogonal projections.

The corresponding curvatures are defined by the projection formula
\[
F=P_0[dP_0,dP_0]P_0,
\qquad
\widetilde F=
\widetilde P_0[d\widetilde P_0,d\widetilde P_0]\widetilde P_0.
\]

We assume the following:
\begin{enumerate}
\item \(\dim\ker L(x)=\dim\ker \widetilde L(x)=m\) is constant on \(U\).
\item There exists \(\gamma>0\) such that, for all \(x\in U\),
\[
\operatorname{Spec}(L(x))\subset\{0\}\cup[\gamma,\infty),
\qquad
\operatorname{Spec}(\widetilde L(x))\subset\{0\}\cup[\gamma,\infty).
\]
\item \(L\) and \(\widetilde L\) are of class \(C^2\) on \(U\), and their \(C^2\)-norms are uniformly bounded.
\end{enumerate}

Let \(\Gamma\) be the positively oriented circle centered at the origin with radius \(\gamma/2\). Then the zero-mode projections are given by the Riesz projection formulas
\[
P_0(x)
=
\frac{1}{2\pi i}
\int_\Gamma
(z-L(x))^{-1}\,dz,
\]
and
\[
\widetilde P_0(x)
=
\frac{1}{2\pi i}
\int_\Gamma
(z-\widetilde L(x))^{-1}\,dz.
\]

Throughout this appendix, \(\|\cdot\|\) denotes the operator norm.

\subsection{Stability of the zero-mode projection}

We first record a uniform resolvent estimate.

\begin{lemma}
For every \(x\in U\) and every \(z\in\Gamma\),
\[
\|(z-L(x))^{-1}\|\le \frac{2}{\gamma},
\qquad
\|(z-\widetilde L(x))^{-1}\|\le \frac{2}{\gamma}.
\]
\end{lemma}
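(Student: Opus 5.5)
The bound follows from the spectral theorem for self-adjoint operators on the finite-dimensional Hilbert space \(H_q\), combined with the spectral-gap assumption. Fix \(x\in U\) and \(z\in\Gamma\), so that \(|z|=\gamma/2\). Since \(L(x)\) is self-adjoint, it is unitarily diagonalizable with real eigenvalues \(\mu_1,\dots,\mu_N\). The resolvent \((z-L(x))^{-1}\) is therefore normal, with eigenvalues \((z-\mu_j)^{-1}\), and its operator norm is
\[
\|(z-L(x))^{-1}\|=\max_j \frac{1}{|z-\mu_j|}=\frac{1}{\operatorname{dist}\bigl(z,\operatorname{Spec}(L(x))\bigr)}.
\]
The whole estimate thus reduces to a lower bound on the distance from \(z\) to the spectrum.

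By assumption 2 of the setup, \(\operatorname{Spec}(L(x))\subset\{0\}\cup[\gamma,\infty)\). We bound the distance to each part separately. For the eigenvalue \(0\), we have \(|z-0|=\gamma/2\). For an eigenvalue \(\mu\ge\gamma\), the reverse triangle inequality gives \(|z-\mu|\ge \mu-|z|\ge \gamma-\gamma/2=\gamma/2\). Hence
\[
\operatorname{dist}\bigl(z,\operatorname{Spec}(L(x))\bigr)\ge \gamma/2.
\]
In particular \(z\notin\operatorname{Spec}(L(x))\), so the resolvent exists, and \(\|(z-L(x))^{-1}\|\le 2/\gamma\). The argument for \(\widetilde L(x)\) is identical, because the same gap constant \(\gamma\) is assumed for both families.

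There is no real obstacle here. The only point that needs care is that the norm identity uses normality of the resolvent, which comes from self-adjointness of \(L(x)\) via the spectral theorem; for a non-normal family the bound would fail. It is also worth recording that the constant \(2/\gamma\) is uniform in both \(x\in U\) and \(z\in\Gamma\). This uniformity is what the subsequent resolvent-identity estimates for \(P_0-\widetilde P_0\), \(dP_0-d\widetilde P_0\), and \(F-\widetilde F\) will rely on.
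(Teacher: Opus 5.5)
Your proof is correct and follows the paper's argument: \(|z|=\gamma/2\) together with \(\operatorname{Spec}(L(x))\subset\{0\}\cup[\gamma,\infty)\) gives \(\operatorname{dist}(z,\operatorname{Spec}(L(x)))\ge\gamma/2\). Self-adjointness then gives \(\|(z-L(x))^{-1}\|=1/\operatorname{dist}(z,\operatorname{Spec}(L(x)))\le 2/\gamma\). Your explicit reverse-triangle-inequality step and your appeal to normality of the resolvent only fill in details that the paper leaves to ``finite-dimensional spectral theory.''
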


\begin{proof}
Since \(z\in\Gamma\), we have \(|z|=\gamma/2\). On the other hand,
\[
\operatorname{Spec}(L(x))\subset\{0\}\cup[\gamma,\infty).
\]
Hence the distance from \(z\) to \(\operatorname{Spec}(L(x))\) is at least \(\gamma/2\). Therefore, by finite-dimensional spectral theory,
\[
\|(z-L(x))^{-1}\|
\le
\frac{1}{\operatorname{dist}(z,\operatorname{Spec}(L(x)))}
\le
\frac{2}{\gamma}.
\]
The proof for \(\widetilde L(x)\) is identical.
\end{proof}

We also use the standard resolvent identity.

\begin{lemma}[Resolvent identity]
\[
(z-L)^{-1}-(z-\widetilde L)^{-1}
=
(z-L)^{-1}(L-\widetilde L)(z-\widetilde L)^{-1}.
\]
\end{lemma}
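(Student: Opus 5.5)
The resolvent identity is purely algebraic, so I would verify it by factoring rather than by any spectral argument. The plan is to write
\[
L-\widetilde L=(z-\widetilde L)-(z-L),
\]
which holds because the \(z\) terms cancel. I would then multiply this identity on the left by \((z-L)^{-1}\) and on the right by \((z-\widetilde L)^{-1}\). Both inverses exist: \(z\in\Gamma\) lies at distance at least \(\gamma/2\) from both spectra, by the preceding lemma.

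Expanding the product gives
\[
(z-L)^{-1}(z-\widetilde L)(z-\widetilde L)^{-1}-(z-L)^{-1}(z-L)(z-\widetilde L)^{-1}
=(z-L)^{-1}-(z-\widetilde L)^{-1},
\]
which is exactly the claimed identity. Everything happens pointwise in \(x\in U\), with \(L=L(x)\) and \(\widetilde L=\widetilde L(x)\), so no regularity in \(x\) is needed. The only care required is to keep the order of the factors, since \(L\) and \(\widetilde L\) need not commute.

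There is no real obstacle here. The one point to state explicitly is that invertibility of \(z-L(x)\) and \(z-\widetilde L(x)\) for \(z\in\Gamma\) comes from the spectral-gap assumption, as in the previous lemma. I would also note how the identity will be used next. Combined with the bound \(2/\gamma\) on each resolvent, it gives
\[
\|(z-L)^{-1}-(z-\widetilde L)^{-1}\|\le \frac{4}{\gamma^2}\|L-\widetilde L\|.
\]
Integrating over \(\Gamma\), whose length is \(\pi\gamma\), then yields \(\|P_0-\widetilde P_0\|\le \frac{2}{\gamma}\|L-\widetilde L\|\). This is the constant \(C_0\) in the theorem.
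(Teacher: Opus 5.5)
Your proposal is correct and is exactly the standard algebraic verification the paper relies on (the paper states the lemma without proof and calls it standard): rewrite \(L-\widetilde L=(z-\widetilde L)-(z-L)\), then multiply by \((z-L)^{-1}\) on the left and \((z-\widetilde L)^{-1}\) on the right. Your remark that invertibility on \(\Gamma\) follows from the spectral gap, and the resulting constant \(C_0=2/\gamma\), both agree with how the paper uses the identity.
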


The preceding two lemmas imply the stability of the zero-mode projection.

\begin{proposition}
There exists a constant \(C_0>0\) such that
\[
\|P_0-\widetilde P_0\|_{C^0(U)}
\le
C_0\|L-\widetilde L\|_{C^0(U)}.
\]
\end{proposition}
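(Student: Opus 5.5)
Subtracting the two Riesz projection formulas with the common contour \(\Gamma\) (the circle of radius \(\gamma/2\) about the origin), we obtain, for each fixed \(x\in U\),
\[
P_0(x)-\widetilde P_0(x)
=
\frac{1}{2\pi i}\int_\Gamma\Bigl[(z-L(x))^{-1}-(z-\widetilde L(x))^{-1}\Bigr]dz .
\]
We then rewrite the integrand with the resolvent identity as \((z-L(x))^{-1}(L(x)-\widetilde L(x))(z-\widetilde L(x))^{-1}\). Using one common contour for both families is legitimate because the gap constant \(\gamma\) in the assumptions is shared by \(L\) and \(\widetilde L\). Hence, for every \(x\in U\), \(\Gamma\) encloses \(0\) and separates it from the rest of both spectra.

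Next we bound the integrand in operator norm. Submultiplicativity and the uniform resolvent lemma give
\[
\bigl\|(z-L(x))^{-1}(L(x)-\widetilde L(x))(z-\widetilde L(x))^{-1}\bigr\|
\le \frac{4}{\gamma^2}\,\|L(x)-\widetilde L(x)\|
\]
for all \(z\in\Gamma\). The contour has length \(\pi\gamma\), so the integral estimate yields
\[
\|P_0(x)-\widetilde P_0(x)\|\le \frac{1}{2\pi}\cdot \pi\gamma\cdot\frac{4}{\gamma^2}\,\|L(x)-\widetilde L(x)\|=\frac{2}{\gamma}\|L(x)-\widetilde L(x)\|.
\]
Taking the supremum over \(x\in U\) gives the claim with \(C_0=2/\gamma\). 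Only the gap assumption is used here; neither the \(C^2\) bounds nor the equal multiplicity \(m\) is needed at this stage.

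The only point needing care is that \(\Gamma\) must be admissible uniformly in \(x\) for both families simultaneously. This is exactly what the common spectral-gap hypothesis provides, so we do not expect a genuine obstacle.
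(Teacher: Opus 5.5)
Your proof is correct and follows the paper's argument essentially verbatim: you subtract the Riesz projections over the common circle of radius $\gamma/2$, apply the resolvent identity and the uniform bound $\|(z-L)^{-1}\|\le 2/\gamma$, and integrate over a contour of length $\pi\gamma$ to obtain $C_0=2/\gamma$. Your remark that the equal-multiplicity and $C^2$ hypotheses are not needed at this step is also accurate; self-adjointness and the common spectral gap suffice.
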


\begin{proof}
Using the Riesz projection formula and the resolvent identity, we obtain
\[
P_0-\widetilde P_0
=
\frac{1}{2\pi i}
\int_\Gamma
\left((z-L)^{-1}-(z-\widetilde L)^{-1}\right)\,dz
\]
\[
=
\frac{1}{2\pi i}
\int_\Gamma
(z-L)^{-1}(L-\widetilde L)(z-\widetilde L)^{-1}\,dz.
\]
Therefore,
\[
\|P_0-\widetilde P_0\|
\le
\frac{1}{2\pi}
\operatorname{length}(\Gamma)
\sup_{z\in\Gamma}\|(z-L)^{-1}\|
\|L-\widetilde L\|
\sup_{z\in\Gamma}\|(z-\widetilde L)^{-1}\|.
\]
Since \(\operatorname{length}(\Gamma)=\pi\gamma\), the uniform resolvent estimate gives
\[
\|P_0-\widetilde P_0\|
\le
\frac{1}{2\pi}\cdot\pi\gamma\cdot
\frac{2}{\gamma}\cdot
\frac{2}{\gamma}
\|L-\widetilde L\|
=
\frac{2}{\gamma}\|L-\widetilde L\|.
\]
Taking the supremum over \(U\) proves the claim.
\end{proof}

\subsection{Stability of the derivative of the projection}

We next prove stability of the derivative of the projection. Differentiating the Riesz projection formula gives
\[
dP_0
=
\frac{1}{2\pi i}
\int_\Gamma
(z-L)^{-1}(dL)(z-L)^{-1}\,dz.
\]
Similarly,
\[
d\widetilde P_0
=
\frac{1}{2\pi i}
\int_\Gamma
(z-\widetilde L)^{-1}(d\widetilde L)(z-\widetilde L)^{-1}\,dz.
\]

\begin{proposition}
There exists a constant \(C_1>0\) such that
\[
\|dP_0-d\widetilde P_0\|_{C^0(U)}
\le
C_1\|L-\widetilde L\|_{C^1(U)}.
\]
\end{proposition}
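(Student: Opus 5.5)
We start from the two differentiated Riesz formulas displayed just above the statement. The difference \(dP_0-d\widetilde P_0\) is expanded by adding and subtracting mixed terms, so that in each summand only one factor changes:
\[
R\,dL\,R-\widetilde R\,d\widetilde L\,\widetilde R
=
(R-\widetilde R)\,dL\,R
+\widetilde R\,(dL-d\widetilde L)\,R
+\widetilde R\,d\widetilde L\,(R-\widetilde R).
\]
Here \(R=(z-L)^{-1}\) and \(\widetilde R=(z-\widetilde L)^{-1}\). We then integrate over the fixed contour \(\Gamma\) of radius \(\gamma/2\) and length \(\pi\gamma\), exactly as in the proof of the \(C^0\) estimate for \(P_0-\widetilde P_0\).

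For the first and third terms we use the resolvent identity,
\[
R-\widetilde R=R(L-\widetilde L)\widetilde R .
\]
Together with the uniform resolvent lemma \(\|R\|,\|\widetilde R\|\le 2/\gamma\) on \(\Gamma\), this gives the pointwise bound
\[
\|R-\widetilde R\|\le \frac{4}{\gamma^2}\,\|L-\widetilde L\|.
\]
Each of these two terms is then bounded by \((2/\gamma)^3 M\,\|L-\widetilde L\|\). Here \(M\) is the uniform bound on \(\|dL\|\) and \(\|d\widetilde L\|\) over \(U\), which is supplied by assumption~3.

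The middle term is bounded directly by \((2/\gamma)^2\|dL-d\widetilde L\|\).

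Multiplying by \(\frac{1}{2\pi}\operatorname{length}(\Gamma)=\gamma/2\) and taking the supremum over \(x\in U\) gives
\[
\|dP_0-d\widetilde P_0\|_{C^0(U)}
\le
\frac{2}{\gamma}\|dL-d\widetilde L\|_{C^0(U)}
+\frac{8M}{\gamma^2}\|L-\widetilde L\|_{C^0(U)}.
\]
This is dominated by \(C_1\|L-\widetilde L\|_{C^1(U)}\) with \(C_1=\max(2/\gamma,\,8M/\gamma^2)\), up to the convention for the \(C^1\) norm.

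The one point needing care is the justification of the differentiated Riesz formula, that is, differentiating under the integral sign. Three facts make this routine. The contour \(\Gamma\) is independent of \(x\), since the spectral gap \(\gamma\) is uniform. The resolvent is \(C^1\) in \(x\) uniformly for \(z\in\Gamma\), with
\[
d(z-L)^{-1}=(z-L)^{-1}\,dL\,(z-L)^{-1}.
\]
Everything is finite-dimensional. Derivatives are taken componentwise in the coordinates \((d,\lambda)\), with operator norms summed or maximized over components; this fixes the meaning of \(\|\cdot\|_{C^0(U)}\) for the operator-valued 1-forms.
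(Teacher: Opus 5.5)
Your proposal is correct and follows the paper's proof essentially step for step. It uses the same three-term splitting of \((z-L)^{-1}dL\,(z-L)^{-1}-(z-\widetilde L)^{-1}d\widetilde L\,(z-\widetilde L)^{-1}\), the resolvent identity on the outer terms, and the uniform bound \(2/\gamma\) on \(\Gamma\); beyond the paper, you make the constant explicit, \(C_1=\max(2/\gamma,\,8M/\gamma^2)\), and justify differentiating under the integral sign, which the paper leaves implicit.
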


\begin{proof}
Subtracting the two formulas above, we obtain
\[
dP_0-d\widetilde P_0
=
\frac{1}{2\pi i}
\int_\Gamma
\left[
(z-L)^{-1}(dL)(z-L)^{-1}
-
(z-\widetilde L)^{-1}(d\widetilde L)(z-\widetilde L)^{-1}
\right]\,dz.
\]
We decompose the integrand into the following three terms:
\[
\left((z-L)^{-1}-(z-\widetilde L)^{-1}\right)(dL)(z-L)^{-1}
\]
\[
+
(z-\widetilde L)^{-1}(dL-d\widetilde L)(z-L)^{-1}
\]
\[
+
(z-\widetilde L)^{-1}(d\widetilde L)
\left((z-L)^{-1}-(z-\widetilde L)^{-1}\right).
\]
For the first and third terms, we apply the resolvent identity. For all three terms, we use the uniform resolvent estimate. Each term is then bounded by a constant multiple of
\[
\|L-\widetilde L\|+\|dL-d\widetilde L\|.
\]
The constant depends only on the spectral gap \(\gamma\) and the uniform \(C^1\)-bounds of \(L\) and \(\widetilde L\). Therefore,
\[
\|dP_0-d\widetilde P_0\|_{C^0(U)}
\le
C_1\|L-\widetilde L\|_{C^1(U)}.
\]
\end{proof}

\subsection{Stability of the curvature}

We now prove stability of the curvature. We do not use connection forms; instead, we directly use the projection formula
\[
F=P_0[dP_0,dP_0]P_0.
\]

Let \(x^a\) be local coordinates. The components of the curvature are
\[
F_{ab}
=
P_0[\partial_aP_0,\partial_bP_0]P_0.
\]
Similarly,
\[
\widetilde F_{ab}
=
\widetilde P_0[\partial_a\widetilde P_0,\partial_b\widetilde P_0]\widetilde P_0.
\]

\begin{proposition}
There exists a constant \(C_2>0\) such that
\[
\|F-\widetilde F\|_{C^0(U)}
\le
C_2\|L-\widetilde L\|_{C^2(U)}.
\]
\end{proposition}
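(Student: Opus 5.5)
The plan is to work componentwise with $F_{ab}=P_0[\partial_aP_0,\partial_bP_0]P_0$ and to reduce the claim to the two preceding propositions, namely the $C^0$ bound for $P_0-\widetilde P_0$ and the $C^1$-type bound for $dP_0-d\widetilde P_0$, together with uniform a priori bounds on $P_0$, $\widetilde P_0$ and their first derivatives. Since $F_{ab}$ is multilinear in $(P_0,\partial_aP_0,\partial_bP_0)$, the difference $F_{ab}-\widetilde F_{ab}$ telescopes into a sum of terms, each containing exactly one factor that is a difference.

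\textbf{Uniform bounds.} Orthogonal projections satisfy $\|P_0\|,\|\widetilde P_0\|\le 1$. From the differentiated Riesz formula
\[
\partial_aP_0=\frac{1}{2\pi i}\int_\Gamma (z-L)^{-1}(\partial_aL)(z-L)^{-1}\,dz
\]
and the uniform resolvent estimate $\|(z-L)^{-1}\|\le 2/\gamma$ on $\Gamma$ (circle of radius $\gamma/2$, length $\pi\gamma$), we get
\[
\|\partial_aP_0\|\le \frac{\gamma}{2}\cdot\frac{4}{\gamma^2}\|\partial_aL\|=\frac{2}{\gamma}\|L\|_{C^1(U)} .
\]
The same bound holds for $\widetilde P_0$. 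Denote by $M$ a common bound for these quantities; it depends only on $\gamma$ and the assumed uniform $C^1$ bounds.

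\textbf{Telescoping.} Expand $P_0(\partial_aP_0\,\partial_bP_0-\partial_bP_0\,\partial_aP_0)P_0$ and its tilde version. For a single product $XYZW$ use
\[
XYZW-\widetilde X\widetilde Y\widetilde Z\widetilde W=(X-\widetilde X)YZW+\widetilde X(Y-\widetilde Y)ZW+\widetilde X\widetilde Y(Z-\widetilde Z)W+\widetilde X\widetilde Y\widetilde Z(W-\widetilde W).
\]
Each term is bounded by $M^3$ times either $\|P_0-\widetilde P_0\|$ or $\|\partial P_0-\partial\widetilde P_0\|$. Applying the two preceding propositions gives
\[
\|F_{ab}-\widetilde F_{ab}\|\le 8M^3\bigl(C_0\|L-\widetilde L\|_{C^0}+C_1\|L-\widetilde L\|_{C^1}\bigr),
\]
which is at most $C_2\|L-\widetilde L\|_{C^2(U)}$ because $C^k$ norms are monotone in $k$. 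Taking the supremum over $x\in U$ and summing over the finitely many index pairs $(a,b)$ completes the proof.

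The argument therefore yields the sharper $C^1$ dependence, and the stated $C^2$ bound follows a fortiori. The $C^2$ hypothesis is only needed so that $F$ is defined with continuous coefficients, or if one wants to control $dF$ as well.

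\textbf{Main obstacle.} The only real care point is keeping the constants uniform over $U$. This depends on the contour $\Gamma$ being fixed independently of $x$, which follows from the common gap $\gamma$, and on the uniform $C^1$ bounds from the assumptions. Both are already in place, so I expect no genuine difficulty beyond this bookkeeping.
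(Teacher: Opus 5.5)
Your proposal is correct and follows essentially the same route as the paper: a componentwise multilinear telescoping of $F_{ab}=P_0[\partial_aP_0,\partial_bP_0]P_0$, combined with uniform bounds on $P_0$ and $\partial_aP_0$ and the two preceding propositions. The paper groups the difference as outer-projection differences plus a split commutator difference rather than your four-factor expansion, and it only asserts the bound that you make explicit as $\|\partial_aP_0\|\le (2/\gamma)\|\partial_aL\|$; like the paper, you correctly note that $C^1$ control of $L-\widetilde L$ already suffices.
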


\begin{proof}
It suffices to estimate each local component \(F_{ab}\). We write
\[
F_{ab}-\widetilde F_{ab}
=
P_0[\partial_aP_0,\partial_bP_0]P_0
-
\widetilde P_0[\partial_a\widetilde P_0,\partial_b\widetilde P_0]\widetilde P_0.
\]
We decompose this as
\[
F_{ab}-\widetilde F_{ab}
=
(P_0-\widetilde P_0)[\partial_aP_0,\partial_bP_0]P_0
\]
\[
+
\widetilde P_0
\left(
[\partial_aP_0,\partial_bP_0]
-
[\partial_a\widetilde P_0,\partial_b\widetilde P_0]
\right)
P_0
\]
\[
+
\widetilde P_0[\partial_a\widetilde P_0,\partial_b\widetilde P_0](P_0-\widetilde P_0).
\]
Moreover,
\[
[\partial_aP_0,\partial_bP_0]
-
[\partial_a\widetilde P_0,\partial_b\widetilde P_0]
\]
can be written as
\[
[\partial_aP_0-\partial_a\widetilde P_0,\partial_bP_0]
+
[\partial_a\widetilde P_0,\partial_bP_0-\partial_b\widetilde P_0].
\]

Using the uniform boundedness of \(P_0,\widetilde P_0,dP_0,d\widetilde P_0\), we obtain
\[
\|F_{ab}-\widetilde F_{ab}\|
\le
C
\left(
\|P_0-\widetilde P_0\|
+
\|dP_0-d\widetilde P_0\|
\right).
\]
By the estimates proved above,
\[
\|P_0-\widetilde P_0\|_{C^0(U)}
\le
C\|L-\widetilde L\|_{C^0(U)}
\]
and
\[
\|dP_0-d\widetilde P_0\|_{C^0(U)}
\le
C\|L-\widetilde L\|_{C^1(U)}.
\]
Thus the curvature difference is controlled by the perturbation of the operator family. In particular, since the theorem assumes \(C^2\)-control, we obtain
\[
\|F-\widetilde F\|_{C^0(U)}
\le
C_2\|L-\widetilde L\|_{C^2(U)}.
\]

In fact, for \(C^0\)-stability of \(F\) alone, \(C^1\)-control of \(L-\widetilde L\) is sufficient. In the main theorem, however, we impose a \(C^2\)-assumption as a convenient sufficient condition that also covers the regularity of the curvature.
\end{proof}

\subsection{Proof of the stability theorem}

Combining the preceding estimates proves the stability theorem on regular regions.

\begin{theorem}
Under the assumptions of the quantitative stability theorem in the main text,
\[
\|P_0-\widetilde P_0\|_{C^0(U)}
\le
C_0\|L-\widetilde L\|_{C^0(U)},
\]
\[
\|dP_0-d\widetilde P_0\|_{C^0(U)}
\le
C_1\|L-\widetilde L\|_{C^1(U)},
\]
and
\[
\|F-\widetilde F\|_{C^0(U)}
\le
C_2\|L-\widetilde L\|_{C^2(U)}.
\]
In particular, the zero-mode projection and the curvature are locally stable on regular regions.
\end{theorem}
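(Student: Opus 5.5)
The theorem is essentially an assembly of the three propositions proved earlier in this appendix, so the plan is to combine them and make the constants explicit. I will work throughout with the circle \(\Gamma\) of radius \(\gamma/2\) centered at the origin. By the uniform resolvent lemma, \(\|(z-L(x))^{-1}\|,\|(z-\widetilde L(x))^{-1}\|\le 2/\gamma\) for all \(x\in U\) and \(z\in\Gamma\). This gives, uniformly in \(x\), the Riesz representations of \(P_0\) and \(\widetilde P_0\).

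First, for the \(C^0\) estimate I insert the resolvent identity into the difference of the Riesz integrals. Bounding the contour integral by length times supremum gives \(\|P_0-\widetilde P_0\|\le (2/\gamma)\|L-\widetilde L\|\) pointwise, hence \(C_0=2/\gamma\).

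Second, for the derivative estimate I differentiate under the integral sign. This is justified because \(\Gamma\) is fixed and the resolvent is \(C^1\) in \(x\) whenever \(L\) is. The result is \(dP_0=\frac{1}{2\pi i}\int_\Gamma R\,dL\,R\,dz\), with \(R=(z-L)^{-1}\). I then split the difference into the three telescoping terms of the earlier proposition. The two outer terms each carry one resolvent difference, which the resolvent identity turns into a factor \(\|L-\widetilde L\|\); the middle term carries \(\|dL-d\widetilde L\|\). Collecting the factors gives
\[
C_1\lesssim \gamma^{-1}\bigl(1+\gamma^{-1}\sup_U\|dL\|+\gamma^{-1}\sup_U\|d\widetilde L\|\bigr),
\]
which is finite by the assumed uniform \(C^2\), and hence \(C^1\), bounds.

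Third, for the curvature I write each component \(F_{ab}-\widetilde F_{ab}\) as the three-term telescoping sum already displayed, and expand the commutator difference by bilinearity. This step needs a priori bounds \(\|P_0\|,\|\widetilde P_0\|\le 1\), which hold since these are orthogonal projections, and \(\|\partial_aP_0\|,\|\partial_a\widetilde P_0\|\le (2/\gamma)\sup\|\partial_a L\|\), which follows from the derivative formula with the same contour bookkeeping. With these, \(\|F-\widetilde F\|\le C(\|P_0-\widetilde P_0\|+\|dP_0-d\widetilde P_0\|)\). Combining with the first two steps, and using \(\|\cdot\|_{C^1}\le\|\cdot\|_{C^2}\), yields \(C_2\). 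Finally I take suprema over \(x\in U\). The resulting constants depend only on \(\gamma\), on the uniform \(C^2\) bounds, and (through the component norm) on \(\dim U\).

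The main obstacle is ensuring uniformity of all constants over \(U\), which may be non-compact. This is handled by the common gap hypothesis, which lets one contour \(\Gamma\) work for every \(x\) and both families, together with the hypothesis that the \(C^1\) norms of \(L\) and \(\widetilde L\) are bounded uniformly on \(U\). One remaining point to verify is that the Riesz integral returns exactly the orthogonal kernel projection. This holds because the only spectrum inside \(\Gamma\) is \(0\) and the operators are self-adjoint, as already established in the Riesz projection proposition.
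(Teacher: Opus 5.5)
Your proposal is correct and follows the paper's argument essentially step for step. You use the Riesz integral over the circle of radius $\gamma/2$ with the resolvent identity (giving $C_0=2/\gamma$), the same three-term splitting of $dP_0-d\widetilde P_0$, and the same telescoping decomposition of $F_{ab}-\widetilde F_{ab}$ followed by bilinearity of the commutator. Your only additions are an explicit form of $C_1$ and a derivation of the a priori bound $\|\partial_aP_0\|\le(2/\gamma)\sup_U\|\partial_aL\|$, which the paper uses as ``uniform boundedness'' without proof.
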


\begin{proof}
The first estimate follows from the Riesz projection formula and the resolvent identity. The second estimate follows by differentiating the Riesz projection formula. The third estimate follows from the projection formula for curvature,
\[
F=P_0[dP_0,dP_0]P_0,
\]
together with the estimates for \(P_0\) and \(dP_0\).
\end{proof}

\subsection{Failure near the singular set}

We conclude by explaining why such uniform stability generally fails near the singular set.

The essential quantity in the estimates above is the resolvent norm
\[
\|(z-L)^{-1}\|.
\]
This is controlled by the spectral gap \(\gamma\) separating the zero eigenvalue from the nonzero spectrum:
\[
\|(z-L)^{-1}\|\le \frac{2}{\gamma}.
\]
Therefore, as \(\gamma\to0\), the constants in the estimates diverge.

This means that quantities such as the zero-mode projection and the curvature may become unstable near the singular set. In particular, at points where the Betti number changes or where the separation between zero modes and nonzero modes is lost, the smooth bundle picture on regular regions breaks down.

Thus, the uniform Lipschitz-type stability of \(P_0\) and \(F\) is a local statement on regular regions with a spectral gap. It is not expected to hold globally across the singular set. This is not a defect of the theory; rather, it reflects genuine topological changes or abrupt reorganization of the zero-mode space.

\section{Numerical Details}

Let the time parameter be sampled as a discrete sequence
\[
t_0,t_1,\dots,t_{N_t-1},
\]
and let the scale parameter be sampled as
\[
d_0,d_1,\dots,d_{N_d-1}.
\]
For each time \(t_j\), we construct a Vietoris--Rips filtration from the point cloud and compute the one-dimensional persistence diagram
\[
D_j=\{(b_\alpha^{(j)},d_\alpha^{(j)})\}_\alpha.
\]
We also retain the representative cycle associated with each finite interval as a vector
\[
c_\alpha^{(j)}\in \mathbb{R}^m
\]
embedded in a common ambient edge space
\[
H_1^{\mathrm{amb}}\cong \mathbb{R}^m.
\]

At each grid point \((d_i,t_j)\), we construct the ordinary one-dimensional combinatorial Hodge Laplacian
\[
L_1(d_i,t_j)
=
B_1(d_i,t_j)^\top B_1(d_i,t_j)
+
B_2(d_i,t_j) B_2(d_i,t_j)^\top
\]
from the simplicial complex at threshold \(d_i\). Here \(B_1(d_i,t_j)\) and \(B_2(d_i,t_j)\) denote the boundary matrices of the Vietoris--Rips complex \(K(d_i,t_j)\). We then compute its zero space
\[
\mathcal{H}(d_i,t_j)=\ker L_1(d_i,t_j),
\]
which is the harmonic subspace at that parameter value. By the discrete Hodge theorem, this space realizes the ordinary first homology
\[
\mathcal{H}(d_i,t_j)\cong H_1(K(d_i,t_j)).
\]

The persistence diagram is used only as a feature-selection device for choosing representative cycles inside the ordinary Hodge zero-mode space. More precisely, at each \((d_i,t_j)\), we consider those persistence intervals that are alive at the threshold \(d_i\), namely those satisfying
\[
b_\alpha^{(j)}\le d_i,
\qquad
d_\alpha^{(j)}>d_i.
\]
For each such interval, the representative cycle \(c_\alpha^{(j)}\) is projected onto the harmonic subspace:
\[
\widehat{c}_\alpha^{(i,j)}
=
\Pi_{\mathcal H(d_i,t_j)}\,c_\alpha^{(j)}.
\]
Among these candidates, we select the two with the longest lifetimes and orthonormalize them by Gram--Schmidt to obtain
\[
\Psi_{i,j}
=
\Psi(d_i,t_j)
=
\bigl[\psi_1(d_i,t_j),\psi_2(d_i,t_j)\bigr]
\in \mathbb{R}^{m\times 2},
\qquad
\Psi_{i,j}^\top \Psi_{i,j}=I_2.
\]
We refer to this as the rank-two Hodge zero-mode basis.

The associated orthogonal projection is
\[
P_{i,j}
=
P(d_i,t_j)
=
\Psi_{i,j}\Psi_{i,j}^\top.
\]
In the theoretical framework of this paper, the Berry connection is written in a local frame \(\Psi\) as
\[
A=\Psi^\ast d\Psi,
\]
and the curvature is given by
\[
F=dA+A\wedge A=P[dP,dP]P.
\]
In the numerical experiments, we implement the projection formula directly on the discrete grid.

\subsection{Numerical Evaluation of Curvature}

Theoretically, the curvature on the zero-mode bundle is given by
\[
F=P[dP,dP]P.
\]
In the present experiments, the \((d,t)\)-component \(F_{dt}\) of the curvature two-form is approximated by central differences. Namely,
\[
\partial_d P(d_i,t_j)
\approx
\frac{P_{i+1,j}-P_{i-1,j}}{2\Delta d},
\qquad
\partial_t P(d_i,t_j)
\approx
\frac{P_{i,j+1}-P_{i,j-1}}{2\Delta t}.
\]
We then define the curvature matrix by
\[
F_{dt}(d_i,t_j)
=
P_{i,j}
\Bigl(
(\partial_d P)_{i,j}(\partial_t P)_{i,j}
-
(\partial_t P)_{i,j}(\partial_d P)_{i,j}
\Bigr)
P_{i,j}.
\]
This is the direct discretization of the theoretical formula
\[
F=P[dP,dP]P.
\]

To visualize the matrix-valued curvature as a scalar field, we use its Frobenius norm
\[
\mathrm{Curv}(d_i,t_j)
:=
\|F_{dt}(d_i,t_j)\|_F,
\]
and display it as a heatmap. Thus, the ``curvature map'' in this paper is the distribution of \(\|F_{dt}\|_F\), which measures the local strength of internal mixing in the Hodge zero-mode space. Larger values indicate that the infinitesimal transports in the scale direction and the time direction are strongly noncommutative, meaning that the homological feature space is rapidly reorganizing in that neighborhood.

\subsection{Numerical Evaluation of Holonomy}

\subsubsection{One-Step Transport}

Given two neighboring grid points \((d_i,t_j)\) and \((d_{i'},t_{j'})\) with zero-mode bases \(\Psi_{i,j}\) and \(\Psi_{i',j'}\), we define the overlap matrix
\[
M=\Psi_{i,j}^\top \Psi_{i',j'}.
\]
We then take its polar decomposition
\[
M=QS,
\qquad
Q\in O(2),\quad S\ge 0,
\]
and regard the orthogonal factor
\[
Q=\operatorname{polar}(M)
\]
as the discrete one-step parallel transport. In practice, this is computed by the singular value decomposition
\[
M=U\Sigma V^\top
\]
and setting
\[
Q=UV^\top.
\]
This \(Q\) gives the closest orthogonal transformation from the previous zero-mode basis to the next one, and therefore provides a natural discrete approximation to Berry transport.

\subsubsection{Local Holonomy}

Near a curvature peak, we consider a small rectangular loop
\[
(d_{i_0},t_{j_0})
\to
(d_{i_1},t_{j_0})
\to
(d_{i_1},t_{j_1})
\to
(d_{i_0},t_{j_1})
\to
(d_{i_0},t_{j_0}).
\]
By multiplying the one-step transports along the boundary, we define the local holonomy
\[
U_{\square}
=
\prod_{\ell\in \partial \square} Q_\ell.
\]
For a sufficiently small loop, one has
\[
U_{\square}\approx I-\iint_{\square}F,
\]
so \(U_{\square}\) represents the integrated local effect of the curvature. In the figures, its nontriviality is quantified by
\[
\|U_{\square}-I\|_F.
\]

\subsubsection{One-Cycle Holonomy}

In Experiment 4, we fix a \(d=d_{i_\ast}\) and compute transport along the closed loop in time
\[
t_0\to t_1\to \cdots \to t_{N_t-1}\to t_0.
\]
The one-step transports are defined by
\[
Q_j
=
\operatorname{polar}
\bigl(
\Psi(d_{i_\ast},t_j)^\top \Psi(d_{i_\ast},t_{j+1})
\bigr),
\qquad
j=0,\dots,N_t-2,
\]
and the closing transport is
\[
Q_{N_t-1}
=
\operatorname{polar}
\bigl(
\Psi(d_{i_\ast},t_{N_t-1})^\top \Psi(d_{i_\ast},t_0)
\bigr).
\]
The one-cycle holonomy is then
\[
U_{\mathrm{cycle}}
=
Q_{N_t-1}Q_{N_t-2}\cdots Q_1Q_0.
\]
This is the accumulated linear transformation undergone by the Hodge zero-mode space after one full cycle. Theoretically, holonomy is defined as parallel transport along a closed loop and represents the accumulated local effect of curvature, that is, global memory or monodromy.

We also define the cumulative transport up to step \(n\) by
\[
H_n
=
Q_{n-1}\cdots Q_1Q_0,
\]
and plot
\[
\|H_n-I\|_F
\]
as the cumulative holonomy deviation. This quantity measures how far the transported zero-mode basis at time \(t_n\) has deviated from the initial one.

\subsection{Numerical Evaluation of Pointwise Tracking}

\subsubsection{Successive Matching}

In Experiment 2, for each time \(t_j\), we extract from the persistence diagram the two points with the largest lifetimes
\[
\ell_\alpha^{(j)} = d_\alpha^{(j)}-b_\alpha^{(j)},
\]
and denote them by
\[
p_1^{(j)},p_2^{(j)}\in \mathbb{R}^2,
\qquad
p_a^{(j)}=(b_a^{(j)},d_a^{(j)}).
\]

Given the tracked labeled points at time \(t_j\),
\[
\widetilde p_1^{(j)},\widetilde p_2^{(j)},
\]
and the two candidates at the next time \(t_{j+1}\),
\[
p_1^{(j+1)},p_2^{(j+1)},
\]
we form the cost matrix
\[
C_{ab}^{(j)}
=
\bigl\|
\widetilde p_a^{(j)}-p_b^{(j+1)}
\bigr\|_2^2.
\]
For a permutation \(\sigma\in S_2\), the total matching cost is
\[
\mathcal C^{(j)}(\sigma)
=
\sum_{a=1}^2 C_{a,\sigma(a)}^{(j)}.
\]
We then determine the assignment by
\[
\sigma_j^\ast
=
\arg\min_{\sigma\in S_2}\mathcal C^{(j)}(\sigma),
\]
and update the tracked points as
\[
\widetilde p_a^{(j+1)}
=
p_{\sigma_j^\ast(a)}^{(j+1)}.
\]
Although this was implemented using the Hungarian algorithm, in the two-point case this is equivalent to comparing only the identity assignment and the swap assignment.

Accordingly, at each step we explicitly compute the identity cost
\[
\mathcal C_{\mathrm{id}}^{(j)}
=
\|\widetilde p_1^{(j)}-p_1^{(j+1)}\|_2^2
+
\|\widetilde p_2^{(j)}-p_2^{(j+1)}\|_2^2
\]
and the swap cost
\[
\mathcal C_{\mathrm{swap}}^{(j)}
=
\|\widetilde p_1^{(j)}-p_2^{(j+1)}\|_2^2
+
\|\widetilde p_2^{(j)}-p_1^{(j+1)}\|_2^2.
\]
A swap is said to occur whenever
\[
\mathcal C_{\mathrm{swap}}^{(j)}<\mathcal C_{\mathrm{id}}^{(j)}.
\]
We further define the matching ambiguity by
\[
\mathrm{margin}^{(j)}
=
\bigl|
\mathcal C_{\mathrm{id}}^{(j)}
-
\mathcal C_{\mathrm{swap}}^{(j)}
\bigr|.
\]
A small value of \(\mathrm{margin}^{(j)}\) means that the two assignments have nearly identical cost, indicating instability of pointwise tracking.

To measure how close the top two persistence points are at the same time, we also define
\[
\mathrm{sep}^{(j)}
=
\|p_1^{(j)}-p_2^{(j)}\|_2.
\]
In Experiment 2, swaps occurred precisely at times where \(\mathrm{sep}^{(j)}\) became extremely small and \(\mathrm{margin}^{(j)}\) simultaneously approached zero, and these times coincided with the curvature peak region.

\subsubsection{Holonomy-Guided Tracking}

In Experiment 3, we again first construct the ordinary vineyard-style tracking
\[
\widetilde p_a^{(j)}
\]
by successive matching of the top two persistence points.

On the other hand, from the transport of the Hodge zero-mode space we obtain the one-step orthogonal transport
\[
Q_j
=
\operatorname{polar}
\bigl(
\Psi(d_{i_\ast},t_j)^\top \Psi(d_{i_\ast},t_{j+1})
\bigr)
\in O(2).
\]
To determine whether this step is mainly label-preserving or swap-like, we define
\[
s_{\mathrm{diag}}^{(j)}
=
|Q_{11}^{(j)}|+|Q_{22}^{(j)}|,
\qquad
s_{\mathrm{off}}^{(j)}
=
|Q_{12}^{(j)}|+|Q_{21}^{(j)}|,
\]
and then
\[
\mathrm{swap\text{-}likeness}^{(j)}
=
s_{\mathrm{off}}^{(j)}-s_{\mathrm{diag}}^{(j)}.
\]
If this quantity is negative, the transport is mainly diagonal and thus approximately preserves the basis directions. If it is positive, the transport is mainly off-diagonal and thus behaves like a swap.

We therefore define a permutation
\[
\pi_j
=
\begin{cases}
\mathrm{id}, & \mathrm{swap\text{-}likeness}^{(j)}\le 0,\\[4pt]
(12), & \mathrm{swap\text{-}likeness}^{(j)}>0,
\end{cases}
\]
and reorder the persistence points according to
\[
\widehat p_a^{(j+1)}
=
p_{\pi_j(a)}^{(j+1)}.
\]
The sequence \(\widehat p_a^{(j)}\) uses not only local distances in the persistence diagram, but also the transport direction \(Q_j\) of the Hodge zero-mode space itself. To quantify the difference between ordinary tracking and holonomy-guided tracking, we define
\[
\mathrm{TrackError}
=
\frac{1}{N_t}
\sum_{j=0}^{N_t-1}
\left(
\sum_{a=1}^2
\|
\widetilde p_a^{(j)}-\widehat p_a^{(j)}
\|_2^2
\right)^{1/2}.
\]
A larger value indicates that pointwise matching alone does not sufficiently capture the transport structure of the homological feature space.

\subsection{Definition of PD Drift}

In the noise robustness experiment, we define the discrepancy between the persistence diagrams of the baseline data and the noisy data as the optimal matching distance between their top two persistence points at each time.

Let
\[
p_1^{(j)},p_2^{(j)}
\]
be the top two points of the baseline persistence diagram at time \(t_j\), and let
\[
q_1^{(j)},q_2^{(j)}
\]
be the corresponding top two points for the noisy data. Then the PD drift at time \(t_j\) is defined by
\[
\mathrm{PDdrift}(t_j)
=
\frac{1}{2}
\min_{\sigma\in S_2}
\sum_{a=1}^2
\|p_a^{(j)}-q_{\sigma(a)}^{(j)}\|_2.
\]
In practice, this is computed by applying the Hungarian algorithm to the distance matrix
\[
D_{ab}^{(j)}
=
\|p_a^{(j)}-q_b^{(j)}\|_2
\]
and taking the minimum average cost
\[
\mathrm{PDdrift}(t_j)
=
\frac{1}{2}
\sum_{a=1}^2
D_{a,\sigma_j^\ast(a)}^{(j)}.
\]

In the plots as a function of noise strength, we use the temporal average
\[
\overline{\mathrm{PDdrift}}
=
\frac{1}{N_t}
\sum_{j=0}^{N_t-1}
\mathrm{PDdrift}(t_j).
\]
Thus, the ``mean PD drift'' in the main text is the time-averaged optimal matching distance between the top two persistence points of the baseline data and those of the noisy realization.




\end{appendices}


\bibliography{ref}

\end{document}